\newtheorem{thm}{Theorem}[section]
\newtheorem{lem}[thm]{Lemma}
\newtheorem{proposition}[thm]{Proposition}
\newtheorem{assumption}[thm]{Assumption}
\newtheorem{definition}[thm]{Definition}
\newtheorem*{definition*}{Definition}
\newtheorem*{thm*}{Theorem}
\newtheorem{remark}[thm]{Remark}
\newcommand{\bequ}{\begin{equation}}
\newcommand{\eequ}{\end{equation}}
\def\bn{\begin{eqnarray*}}\def\en{\end{eqnarray*}}
\def\bu{\begin{equation}}\def\eu{\end{equation}}
\newcommand{\card}{\mbox{Card}}
\newcommand{\Var}{\mbox{Var}}
\newcommand{\Atan}{\mbox{atan2}}
\def\Z{\mathbb{Z}}
\def\N{\mathbb{N}}
\def\R{\mathbb{R}}
\def\L{\mathbb{L}}
\def\E{\mathbb{E}}
\def\S{\mathbb{S}}
\def\r{{r}}
\def \compi{\mathrm{i}}
\author{Tien Dat Nguyen \thanks{Faculty of Mathematics and Computer Science, Vietnam National University Ho Chi Minh city, Vietnam; \texttt{E-mail: ndat@hcmus.edu.vn}}
\thanks{Vietnam National University Ho Chi Minh city, Vietnam},
Thanh Mai Pham Ngoc\thanks{LAGA, UMR 7539, Institut Galil\'ee, Universit\'e Sorbonne Paris Nord, 93430,
Villetaneuse, France; \texttt{E-mail: phamngoc@math.univ-paris13.fr}}}
\title{Adaptive  estimation for nonparametric circular regression with errors in variables}
\date{\today}
\begin{document}
\maketitle

\begin{abstract}
This paper investigates the nonparametric estimation of a circular regression function in an errors-in-variables framework. Two settings are studied, depending on whether the covariates are circular or linear. Adaptive estimators are constructed and their theoretical performance is assessed through convergence rates over Sobolev and H\"older smoothness classes. Numerical experiments on simulated and real datasets illustrate the practical relevance of the methodology.
\end{abstract}

\noindent \textbf{Keywords} : Circular data, Errors-in-variables model, Adaptive estimation,  Deconvolution. \\%
\textbf{MSC 2010}. Primary 62G08, secondary 62H11.

\section{Introduction}

We consider the problem of circular regression with errors in variables. We observe the i.i.d. dataset $(Z_1, \Theta_1), \dots, (Z_n,\Theta_n)  $ where
$$
\Theta_i = m(X_i) + \zeta_i \quad  \quad  \textrm{(mod $2\pi$)}.
$$
 and 
$$
Z_i = X_i + \varepsilon_i,
$$
with the responses $\Theta_i$ and the regression errors $\zeta_i$ lying in $\mathbb{S}^1$, the unit circle in $\mathbb{R}^2$.

We consider two scenarios: the first where the covariates $X_i$ belong to $\mathbb{S}^1$ (circular predictor case), and the second where the $X_i$ lie in $[0,1]$ (linear predictor case). The predictors $X_i$ are latent, and the covariate errors $\varepsilon_i$ are i.i.d. unobservable random variables with density $f_\varepsilon$, whose characteristic function is assumed to be known. 
The $\varepsilon_i$ are independent of $X_i$. Our objective is to estimate the circular regression function $m$ nonparametrically from the i.i.d. dataset $(Z_1, \Theta_1), \dots, (Z_n,\Theta_n)$. The term circular data is used to distinguish such data from those supported on the real line (or its subsets), henceforth referred to as linear data.

Circular or angular data are encountered in various scientific fields, such as biology (e.g., directions of animal migration), bioinformatics (e.g., protein conformational angles), geology (e.g., rock fracture orientations), medicine (e.g., circadian rhythms), forensics (e.g., crime timing), and the social sciences (e.g., time-of-day or calendar effects). Comprehensive surveys of statistical methods for circular data can be found in Mardia and Jupp \cite{book:Mardia-Jupp}, Jammalamadaka and SenGupta \cite{book:Jammalamadaka-SenGupta}, Ley and Verdebout \cite{book:Ley-Verdebout}, and recent advances are compiled in Pewsey and Garc\'ia-Portugu\'es \cite{Pewsey-GarciaPortuge}.

The circular response in our model introduces substantial statistical challenges compared to classical regression with linear responses. Circular regression necessitates methodologies that respect the geometry of the circle. In the parametric setting, various ad hoc approaches have been proposed (see \cite{Gould}, \cite{Johnson-Wehrly-1}, \cite{Johnson-Wehrly-2}, \cite{Fisher-Lee:1992}, \cite{Kato-Shimizu-Shieh}, \cite{Downs-Mardia}, \cite{Fernandez-Duran}). To the best of our knowledge, even without measurement errors, nonparametric circular regression has been explored in only a few works, such as \cite{Marzio-Panzera-Taylor}, \cite{Nguyen-PhamNgoc-Rivoirard} and \cite{Jeon-Park-Keilegom2021}. Only one recent study \cite{Di-Marzio-Fensore-Taylor} addresses the case involving errors in variables. Errors-in-variables models arise in many experimental contexts where data are corrupted by measurement errors. These models have been extensively studied on the real line (see \cite{Meister}, \cite{Comte-Taupin}, \cite{Koo-Lee}, \cite{Delaigle-Hall-Jamshidi}, \cite{Carroll-Delaigle-Hall}, \cite{Chesneau}, \cite{Jeon-Park-Keilegom2022}, \cite{Jeon-Park-Keilegom2024}), but much less so in the circular response setting. Such models are particularly demanding as they involve a deconvolution step in the estimation process.

As previously mentioned, the only work addressing nonparametric circular regression with measurement errors is \cite{Di-Marzio-Fensore-Taylor}, hence this problem is largely unexplored in statistic. In that study, the authors propose a deconvolution kernel estimator for both the circular and linear covariate cases. Their approach depends critically on the choice of key parameters: a kernel  bandwidth (related to the regression estimation) and a spectral cutoff (related to deconvolution). However, the selection of these parameters is not theoretically addressed, and no convergence rates or adaptive procedures are provided. By adaptation, we mean that the estimator does not require prior knowledge of the regularity of the regression function and can automatically adjust to it--a feature essential in practice. In their numerical implementation, the authors of \cite{Di-Marzio-Fensore-Taylor} rely on cross-validation to select the kernel bandwidth and choose arbitrary spectral cutoff values. In contrast, we propose a novel data-driven estimation procedure for the circular regression function in both settings: circular and linear predictors. We introduce adaptive rules for bandwidth selection inspired from the Goldenshluger and Lepski method (see \cite{Goldenshluger-Lepski:2011}) and deconvolution density problems (see \cite{Comte-Lacour}) and establish convergence rates for the pointwise risk over Sobolev and H\"older classes. Our methodology is based on concentration inequalities. Finally, we assess the performance of our estimators through both simulation studies and real data applications. We show that our statistical procedure achieves satisfactory performance.
\medskip

The paper is organized as follows. Section~\ref{sec:framework} details the framework and the specific features of circular regression. Sections~\ref{section-circular} and~\ref{section-linear} deal respectively with the circular and linear predictor cases. Numerical experiments are presented in Section~\ref{numerical-results}, and all proofs are collected in Section~\ref{proofs}.

\medskip

 \textit{Notations.} It is necessary to equip the reader with some notations. 
 \medskip
 
 The notation $x_+$ means $\max(0,x).$ For two integers $a,b$ we denote $a \wedge b := \min(a,b)$. 
For two functions $u,v$ we denote $u(x) \precsim v(x) $ if there exists a positive constant $C$ not depending on $x$ such that $u(x) \leq C v(x)$ and $u(x) \approx v(x)$ if $u(x) \precsim v(x)$ and $v(x) \precsim u(x)$. 

\medskip

In Section \ref{section-circular}, $\| \cdot \|_{\ell_p}, \, p \geq 1$ denotes the norm on complex sequences space
$$
\| x \|_{\ell_p} := \left (\sum_{l \in \mathbb{Z}} |x_l|^p \right )^{1/p},
$$
and the $l$-th Fourier coefficient of a function $f \in \mathbb{L}^1(\S^1)$ is defined as  $f^{\star}(l) := \int_{\S^1} f(x) e^{\mathrm{i}lx} dx, l \in \mathbb{Z}$.
\medskip

In Section \ref{section-linear}, $\left\| \cdot \right\|_{\L^{1}(\R)}$ and $\left\| \cdot \right\|_{\L^{2}(\R)}$ respectively denote the $\L^{1}$ and $\L^{2}$ norm on $\R$ with respect to the Lebesgue measure:
\begin{equation*}
\left\| f \right\|_{\L^{1}(\R)}  =  \int_{\R} |f(y)| dy , \quad \left\| f \right\|_{\L^{2}(\R)}  = \Big( \int_{\R} |f(y)|^{2} dy  \Big)^{1/2},
\end{equation*}
and the $\L^{\infty}$ norm is defined by $\left\|f\right\|_{\infty} = \sup_{y \in \R} |f(y)|$. And for $f \in \L^{1}(\R)$,  \ $f^{\star}(x):= \int e^{\compi xt} f(t) dt, x \in \R$ denotes the Fourier transform of $f$.  Moreover, we denote $*$ the classical convolution product defined for functions $f, g$ by $f * g(x) := \int_{\R} f(x-y)g(y) dy $, $x \in \R$. 
\medskip
 
 In the sequel, a point on $\mathbb{S}^1$ will not be represented as a two-dimensional vector $\bold{w}=(w_2,w_1)^\top$ with unit Euclidean norm but as an angle $\theta=atan2(w_1,w_2) $ where
\begin{definition}\label{def:full.formula.atan2} The function $\Atan: \R^2\setminus (0,0)\mapsto [-\pi;\pi]$ is defined for any $(w_1,w_2)\in\R^2\setminus (0,0)$ by \emph{
		$$
		\Atan (w_1,w_2):= \left \{
		\begin{array}{ll}
		\arctan\big( \frac{w_1}{w_2} \big) & \quad \mbox{if } \quad w_2 \geq 0 , w_{1} \neq 0 \\
		{0} & \quad {\mbox{if } \quad w_2 > 0 , w_{1} = 0} \\
		\arctan\big( \frac{w_1}{w_2} \big) + \pi & \quad \mbox{if } \quad w_2 < 0, w_1 >0 \\
		\arctan\big( \frac{w_1}{w_2} \big) - \pi & \quad \mbox{if } \quad w_2 < 0, w_1 \leq 0,
		\end{array}
		\right.
		$$
		with $\arctan$ taking values in $[- \pi/ 2, \pi/ 2]$. In particular for $w_1>0$, $\Atan (w_1,0)=\arctan(+\infty)=\pi/2$ and $\Atan (-w_1,0)=\arctan(-\infty)=-\pi/2$.
}\end{definition} 
In this definition, one has arbitrarily fixed the origin of $\mathbb{S}^1$ at $(1,0)^\top$ and uses the anti-clockwise direction as positive. Thus, a circular random variable can be represented as angle over $[-\pi, \pi)$. Observe that $\Atan (0,0)$ is not defined.

\section{Regression with a circular response}\label{sec:framework}
Circular data are fundamentally different from linear data due to their periodicity, and thus require specific techniques.  To measure the closeness between two angles $\theta_1$ and $\theta_2$, we do not consider the natural distance
$$d(\theta_{1} , \theta_{2}) := \min \left\{ \big| \theta_{1}  - \theta_{2}  +  2k\pi \big| : k \in \mathbb{Z} \right\},\quad \theta_1, \theta_2 \in [-\pi, \pi),$$
but focus on $d_c$ with $$d_{c} (\theta_{1}, \theta_{2}) := 1 - \cos(\theta_{1} - \theta_{2}),\quad \theta_1, \theta_2 \in [-\pi, \pi),$$   which is extensively used in the literature of directional statistics (see for instance Section 2 in the seminal monograph by Mardia and Jupp \cite{book:Mardia-Jupp}, Section 3.2.1 of \cite{book:Ley-Verdebout}, \cite{Marzio-Panzera-Taylor} or \cite{MVila-FFernandez-Crujeiras-Panzera}). Note that the divergence $d_c$ corresponds to the usual squared Euclidean norm in $\R^{2}$. Indeed, the angles $\theta_{1}$ and $\theta_{2}$ determine the corresponding points $( \cos \theta_{1} ,  \sin \theta_{1})$ and $( \cos \theta_{2} ,  \sin \theta_{2})$ respectively on the unit circle $\mathbb{S}^{1}$.  Then, the usual squared Euclidean norm  in $\R^{2}$ reads
\begin{align*}
\big( \cos \theta_{1} - \cos \theta_{2} \big)^{2} + \big( \sin \theta_{1} - \sin \theta_{2} \big)^{2} = 2. \big[ 1 - \cos (\theta_{1} - \theta_{2}) \big] = 2. d_{c}(\theta_{1}, \theta_{2}).
\end{align*}
Hence, $\sqrt{d_c}$ is a distance on $[-\pi,\pi)$ and we naturally look for a measurable function $m$ such that: 
\begin{align}\label{minimisation}
\mathbb{E} \big[ d_c(\Theta, m(X))  \big] =
\underset{f: \; \Omega \rightarrow [-\pi,\pi) }{ \textrm{min} } \; \mathbb{E} \big[ d_c(\Theta, f(X))  \big],
\end{align}
where $\Omega$ denotes either $[0,1]$ or $\mathbb{S}^1$ depending on the nature of the predictors $X_i$ and the minimum is taken over $[-\pi, \pi)$-valued functions $f$ that are measurable with respect to the $\sigma$-algebra generated by $X$. It is interesting to notice that the minimization problem (\ref{minimisation}) is directly linked to the definition of the Frechet mean on the circle (see \cite{Charlier:2013}). 
Furthermore, in directional statistics, the problem of finding such a regression function $m(X)$ as defined in (\ref{minimisation}) has been already considered (see \cite{Nguyen-PhamNgoc-Rivoirard} and \cite{MVila-FFernandez-Crujeiras-Panzera}).
\\
Now let us work conditionally to $X$. For $x\in\Omega$,  let
\begin{equation}\label{def:m1m2}
m_{1}(x) := \mathbb{E} \big( \sin (\Theta) | X=x\big)\quad and \quad m_{2}(x) := \mathbb{E} \big( \cos (\Theta) | X=x\big).
\end{equation}
Moreover, write for an arbitrary function $f: \Omega \rightarrow [-\pi,\pi) $
\begin{align*}
\mathbb{E} \big[ \cos (\Theta - f(X)) | X \big] &= \cos(f(X))m_{2}(X) + \sin(f(X))m_{1}(X)
\\
&=  \sqrt{(m_{2}(X))^{2} + (m_{1}(X))^{2} } \cos (f(X) - \gamma(X)) ,
\end{align*}
where $\gamma : \;\Omega \rightarrow [-\pi,\pi)$ is defined for $x \in \Omega$
\begin{equation*}
\cos(\gamma(x)) := \dfrac{m_{2}(x)}{\sqrt{(m_{2}(x))^{2} + (m_{1}(x))^{2} }}   \quad  \textrm{ and } \quad  \sin(\gamma(x)) := \dfrac{m_{1}(x)}{\sqrt{(m_{2}(x))^{2} + (m_{1}(x))^{2}} }.
\end{equation*}
Observe that 
$$\gamma(x) = \Atan ( m_{1}(x),m_{2}(x)).$$
Thus, we have
\begin{align*} \label{eq:m(X).given.by.atan2}
\underset{f: \;\Omega \rightarrow [-\pi,\pi) }{ \textrm{min} }  \; \mathbb{E} \big[ d_c(\Theta, f(X))  \big]
&=1- \underset{f: \; \Omega \rightarrow [-\pi,\pi) }{ \textrm{max} } \E\Big[\mathbb{E} \big[ \cos (\Theta - f(X)) | X \big] \Big]\\
&=1- \underset{f: \;\Omega \rightarrow [-\pi,\pi) }{ \textrm{max} } \E\Big[\sqrt{m_1^2(X)+m_2^2(X)}\cos (f(X) - \gamma(X))\Big].  
\end{align*}
Finally the minimizer of the minimization problem \eqref{minimisation} is achieved for 
$$f(x)=\gamma(x) = \Atan \big( m_{1}(x),m_{2}(x)\big).$$

In conclusion, the circular nature of the response is taken into account by the arctangent of the ratio of the conditional expectation of sine and cosine components of $\Theta$ given $X$.
Accordingly, we tackle the regression problem by estimating the function \begin{equation}
m(x) = \Atan (m_1(x), m_2(x)),\quad x\in \Omega,
\label{formula:regress-function.m}
\end{equation} 
with $m_1$ and $m_2$ defined in \eqref{def:m1m2}.

\begin{remark}
Observe that if $m_1(x)=m_2(x)=0$, then $m(x)$ is not defined. This occurs if and only if
$$
\phi_1(f(\cdot|x)):=\int_{-\pi}^{\pi} e^{\mathrm{i}\theta}f(\theta|x) d\theta=0,
$$
where $f(\cdot|x)$ denotes the conditional density of $\Theta|X=x$. Note that $\phi_1(f(\cdot|x))$ plays a specific role in the literature of directional statistics. See for instance Section 3.4.2 of \cite{book:Mardia-Jupp}.
\end{remark}
In the sequel, we estimate the circular regression function $m$ as defined in \eqref{formula:regress-function.m} under the condition
\begin{equation}\label{not0}
\phi_1(f(\cdot|x))\not=0.
\end{equation}

Since the function $\Atan (w_{1},w_{2})$ is undefined when $w_{1} = w_{2} = 0$, it is reasonable to consider the following assumption:
\begin{assumption}\label{assumption:c_{low}} 
		\begin{equation*}
			m_1(x)\not= 0 \quad \textrm{ or } \quad m_2(x)\not= 0 , \quad \textrm{ for } x \in \Omega,  
		\end{equation*}
		and let $\delta > 0$ be defined as
		\begin{equation}\label{deltawx}
			\delta=\left\{
			\begin{array}{ccc}
				\min\big(|m_1(x)|, |m_2(x)|\big)  &\mbox{ if }   & m_1(x)\not= 0 \mbox{ and }  m_2(x)\not= 0, 
				\\[0.2cm]  
				|m_1(x)|  &\mbox{ if }   & m_2(x)= 0,  \\[0.2cm]  
				|m_2(x)|  &\mbox{ if }   & m_1(x)= 0.
			\end{array}
			\right.
			\end{equation}
\end{assumption}

\begin{assumption}
Assume that
there exists $\delta_{X} > 0$ such that
\begin{align}  
f_{X}(x) \geq \delta_{X} , \hspace{0.2cm} \textrm{ for all } \hspace{0.1cm} x \in \Omega .  
\label{assumption:assumption.on.f_X}
\end{align} 
\end{assumption}

Now let us expose the overall estimation strategy.  In regression with random design, the classical idea to estimate the regression functions $m_i$ with $i \in \{1,2\}$ is to write $m_i$ as a ratio $\frac{m_{i} f_{X}}{f_X}$, with $f_X$ the density of $X$. Hence one usually considers estimators of intermediate functions $p_i$ defined as:
$$p_{i} :=  m_{i}    f_{X}, \quad i \in \{1,2\}, $$
along with an estimator of $f_{X}$ obtained by a deconvolution step due to the errors-in-variables setting.  In constrat, here in our circular response setting, the estimator of $f_X$ does not have any impact because of the ratio involved in the $\Atan$ function (see Definition \ref{def:full.formula.atan2}) which entails that $\Atan(m_1, m_2)= \Atan(p_1,p_2)$. This is actually a specificity of the circular response regression compared with the linear one. Accordingly,  our strategy will consist in, first estimating $p_1$ and $p_2$ and then $m=\Atan(p_1,p_2)$.
 
 \section{Circular predictor}\label{section-circular}
 
 \subsection{Model}
We begin by considering the case where the predictor is circular, which represents the most challenging and novel setting. More specifically, we study the following  model:

$$
\Theta= m(X) + \zeta   \quad  \textrm{(mod $2\pi$)},
$$
with $\Theta, X$ and $ \zeta \in \mathbb{S}^1$. The $X$ are latent and we only have access to:
$$
Z = X + \varepsilon \quad  \textrm{(mod $2\pi$)}.
$$
 We assume that  $X$ and $\varepsilon$ are independent and  the characteristic function of the density $f_\varepsilon$ is known.  From an i.i.d dataset $(Z_1, \Theta_1), \dots, (Z_n,\Theta_n)  $, we aim to estimate $m: \mathbb{S}^1 \mapsto \mathbb{S}^1$.

\subsection{Assumptions}
We shall present our results in terms of Sobolev classes. Due to circular covariates, we deal with Fourier series instead of Fourier transforms. For any function  $f(x)= \frac{1}{2\pi} \sum_{l \in \mathbb{Z}} f^{\star}(l) e^{-\compi l x}$ in $ \mathbb{L}^2(\S^1)$ let 
$$
\| f\|^2_{{W}_\beta} := \sum_{l \in \Z} |f^\star(l)|^2 (1+l^2)^\beta.
$$
We define the class $\mathcal{W}(\beta, S)$ to be 
\begin{equation}
\mathcal{W}(\beta, S)= \{ f  \in  \mathbb{L}^2(\S^1): \; \| f\|^2_{W_\beta} \leq S^2 \}.
\end{equation}


Following \cite{Di-Marzio-Fensore-Panzera-Taylor:2020}, we make the following assumptions about the noise in the variables.
\begin{assumption}
The noise $\varepsilon$ is called ordinary smooth  (OS) if there exist $\nu, a_0, a_1 >0$, such that
\begin{equation}\label{OS-circle}
 a_0( 1+ |l|)^{-\nu} \leq  |f^{\star}_{\varepsilon}(l)| \leq a_1( 1+ |l|)^{-\nu}, \quad l \in \mathbb{Z}.
\end{equation}
The noise $\varepsilon$ is called supersmooth (SS) if there exist $a_0, a_1, b, a >0$ and $ c \in \R$
\begin{equation}\label{SS-circle}
a_0(|l| +1)^{c} \exp(-b |l|^a) \leq  |f^{\star}_{\varepsilon}(l)| \leq a_1(|l| +1)^{c} \exp(-b |l|^a), \quad l \in \mathbb{Z}.
\end{equation}
\end{assumption}

Let us recall that some well-known circular densities belong to these two types of noise. Examples of supersmooth densities include the densities of wrapped Normal and
wrapped Cauchy distributions; conversely, the wrapped Laplace and the wrapped Gamma densities are examples of ordinary smooth ones. 

%
\subsection{Preliminary estimators}

As explained at the end of Section \ref{sec:framework}, the target function $m$ is given by $m=\Atan (m_1,m_2)= \Atan(p_1, p_2)$ with 
$p_1(x)= \E (\sin \Theta_1 | X=x)f_X(x)$ and $p_2(x)= \E (\cos \Theta_1 | X=x)f_X(x)$. We first aim to find  estimates of $p_1$ and $p_2$. 
To this end, we adopt an orthogonal projection method onto the Fourier basis of $\mathbb{L}^2(\mathbb{S}^1)$. 

 In the nonparametric framework, the usual idea is to find an estimator of the $L$-th projection of $p_i, i \in \{1,2\}$ namely an estimator of 
$$
p_{i,L_i}(x):=\frac{1}{2\pi}\sum_{| l| \leq L _i} p_i^\star(l) e^{-\mathrm{i} l x}, \; x \in \S^1.
$$

We propose the following estimates for $p_1$ and $p_2$:

\begin{eqnarray*}
\widehat p_{1,L_1}(x)= \sum_{| l| \leq L_1 }\frac {1}{ 2 \pi n} \sum_{j=1}^n \sin(\Theta_j) \frac{e^{\compi l Z_j}}{f^\star_{\varepsilon}(l)} e^{-\compi l x}, \; x \in \S^1, \\
\widehat p_{2,L_2}(x)= \sum_{| l| \leq L _2}\frac {1}{ 2 \pi n} \sum_{j=1}^n \cos(\Theta_j) \frac{e^{\compi l Z_j}}{f^\star_{\varepsilon}(l)} e^{-\compi l x}, \;x \in \S^1.
\end{eqnarray*}

The expression of $\hat p_{1,L_1}(x)$ (same line of reasoning for $\hat p_{2,L_2}(x)$) is justified by the fact that 
\begin{eqnarray*}
\E  \left [\widehat p_{1,L_1}(x) \right ] &=&  \frac{1}{2\pi}\sum_{| l| \leq L_1 }\frac{e^{-\compi l x}}{f^\star_{\varepsilon}(l)} \E (\E(\sin(\Theta_1)e^{\compi l(X_1+\varepsilon_1)} |X_1))) 
=  \frac{1}{2\pi}\sum_{| l| \leq L_1 }e^{-\mathrm{i} l x} \E(e^{\compi l X_1}m_1(X_1)) \\
&=&  \frac{1}{2\pi}\sum_{| l| \leq L_1 } p_1^\star(l) e^{-\compi l x} = p_{1,L_1}(x),
\end{eqnarray*}
which entails that $\widehat p_{1,L_1}(x)$ is a unbiased estimator of $p_{1,L_1}(x)$.

\subsubsection{Bias and Variance}

We have the following bias-variance decomposition of the pointwise risk of $\hat p_{i,L_i}(x)$: 
$$
\E |\widehat p_{i,L_i}(x) - p_i(x)|^2 =  \underbrace{\E | \widehat p_{i,L_i}(x) -  \E[\widehat p_{i,L_i}(x)] |^2}_{=:Var(\hat p_{i,L_i}(x))} + {| \E[\widehat p_{i,L_i}(x)] -p_i(x)|^2}_{}.
$$

\begin{proposition} \label{var-hat-p} For $i\in \{1,2 \}$, if $p_i \in \mathcal{W}(\beta_i, S_i)$ with $\beta_i> \frac 1 2$,  the bias is controlled as follows
$$| \E[\widehat p_{i,L_i}(x)] -p_i(x)|  = \frac{1}{2 \pi } \left  | \sum_{|l| \geq L_i} p_i^\star(l) \right |  \leq C(S_i,\beta_i) L_i^{-\beta_i},$$
with $C(S_i, \beta_i)$ a constant depending on $S_i$ and $\beta_i$. The variance  is controlled by

$$
Var(\widehat p_{i,L_i}(x))  \leq V_0(n,L_i)$$

with 
$$
 V_0(n,L_i) := \frac{1}{(2\pi)^2n}\min \left \{ \left  ( \sum_{ | l | \leq L_i}  \frac{1}{ |f^\star_{\varepsilon}(l)| }\right )^2,    \| f^\star_{\varepsilon} \|_{\ell_1}  \sum_{|l | \leq 2L_i }   \frac{1}{ |f^\star_{\varepsilon}(l)|^2}    \right\}.
$$
Furthermore, we have that
\begin{enumerate}
\item If the noise $\varepsilon$ is OS (satisfying (\ref{OS-circle})) with $\nu>1$ and provided that $L_i \geq 1$, for $ i \in \{1, 2 \}$, we have 
$$
Var(\widehat p_{i,L_i}(x))  \leq \frac{\max \{1, \|  f^\star_{\varepsilon}\|_{\ell_1} \}}{(2\pi)^2n} L_i^{2\nu +1 }. 
$$

\item If the noise $\varepsilon$ is SS  (satisfying (\ref{SS-circle})), we have 
$$
Var(\widehat p_{i,L_i}(x)) \leq \frac{\max\{1, \|  f^\star_{\varepsilon}\|_{\ell_1}\}}{(2\pi)^2n} L_i^{-2 c+1 } e^{2 b L_i^a}.
$$
\end{enumerate}
\end{proposition}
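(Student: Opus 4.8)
The plan is to prove the bias bound and the variance bound separately, then specialize the variance bound to the OS and SS noise regimes.

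\medskip

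\textbf{Bias.} Since $\widehat p_{i,L_i}(x)$ is unbiased for $p_{i,L_i}(x)$ (as computed in the excerpt), the bias equals $p_{i,L_i}(x) - p_i(x) = -\frac{1}{2\pi}\sum_{|l| > L_i} p_i^\star(l) e^{-\mathrm{i}lx}$, whose modulus is at most $\frac{1}{2\pi}\sum_{|l| > L_i}|p_i^\star(l)|$. I would then insert the factor $(1+l^2)^{\beta_i/2}(1+l^2)^{-\beta_i/2}$ and apply Cauchy--Schwarz: the first factor gives $\|p_i\|_{W_{\beta_i}} \le S_i$, and the second factor gives $\big(\sum_{|l| > L_i}(1+l^2)^{-\beta_i}\big)^{1/2}$. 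Comparing that tail sum to the integral $\int_{L_i}^\infty (1+t^2)^{-\beta_i}\,dt$ (valid since $\beta_i > 1/2$ so the series converges) yields a bound of order $L_i^{-\beta_i + 1/2}$ on the square root, hence $L_i^{-\beta_i+1/2}$ on the bias; actually a slightly more careful estimate of $\sum_{|l|>L_i}|p_i^\star(l)|$ — bounding $|p_i^\star(l)| \le |p_i^\star(l)|(1+l^2)^{\beta_i/2}\cdot l^{-\beta_i}$ and using Cauchy--Schwarz with the sequence $l^{-\beta_i}$ having $\ell^2$-tail of order $L_i^{-\beta_i+1/2}$ — must be arranged to land at exactly $C(S_i,\beta_i)L_i^{-\beta_i}$ as stated; I expect this is obtained by a summation-by-parts or a direct $\ell^1$ estimate of the tail, and I will track constants only up to dependence on $S_i,\beta_i$.

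\medskip

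\textbf{Variance.} Write $\widehat p_{i,L_i}(x) - \E[\widehat p_{i,L_i}(x)] = \frac{1}{n}\sum_{j=1}^n (Y_j - \E Y_j)$ where $Y_j = \frac{1}{2\pi}\sum_{|l|\le L_i} g_i(\Theta_j)\frac{e^{\mathrm{i}lZ_j}}{f_\varepsilon^\star(l)}e^{-\mathrm{i}lx}$ with $g_1 = \sin$, $g_2=\cos$. By i.i.d.\ and $|g_i|\le 1$, $\Var(\widehat p_{i,L_i}(x)) \le \frac{1}{n}\E|Y_1|^2 \le \frac{1}{(2\pi)^2 n}\,\E\big|\sum_{|l|\le L_i}\frac{e^{\mathrm{i}l(Z_1-x)}}{f_\varepsilon^\star(l)}\big|^2$. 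For the first term in the $\min$, bound this by the triangle inequality: $\big|\sum_{|l|\le L_i}\frac{e^{\mathrm{i}l(Z_1-x)}}{f_\varepsilon^\star(l)}\big| \le \sum_{|l|\le L_i}\frac{1}{|f_\varepsilon^\star(l)|}$ deterministically, giving $\big(\sum_{|l|\le L_i}\frac{1}{|f_\varepsilon^\star(l)|}\big)^2$. For the second term, expand the square as $\sum_{|l|,|l'|\le L_i}\frac{e^{-\mathrm{i}(l-l')x}}{f_\varepsilon^\star(l)\overline{f_\varepsilon^\star(l')}}\E[e^{\mathrm{i}(l-l')Z_1}]$ and note $\E[e^{\mathrm{i}(l-l')Z_1}] = f_X^\star(l-l')f_\varepsilon^\star(l-l')$ (using independence of $X$ and $\varepsilon$), with $|f_X^\star(l-l')|\le 1$; then bound $\sum_{l,l'} |f_\varepsilon^\star(l-l')|\,\frac{1}{|f_\varepsilon^\star(l)||f_\varepsilon^\star(l')|}$ by Young's inequality for the convolution, or more simply by writing the index $k = l-l'$ (ranging over $|k|\le 2L_i$) and using $\frac{1}{|f_\varepsilon^\star(l)||f_\varepsilon^\star(l')|}\le \frac12\big(\frac{1}{|f_\varepsilon^\star(l)|^2}+\frac{1}{|f_\varepsilon^\star(l')|^2}\big)$, yielding $\|f_\varepsilon^\star\|_{\ell_1}\sum_{|l|\le 2L_i}\frac{1}{|f_\varepsilon^\star(l)|^2}$. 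Taking the minimum of the two bounds gives $V_0(n,L_i)$.

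\medskip

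\textbf{Specialization.} For OS noise with $\nu > 1$: the lower bound $|f_\varepsilon^\star(l)|\ge a_0(1+|l|)^{-\nu}$ gives $\sum_{|l|\le 2L_i}\frac{1}{|f_\varepsilon^\star(l)|^2}\le a_0^{-2}\sum_{|l|\le 2L_i}(1+|l|)^{2\nu}\precsim L_i^{2\nu+1}$, and $\|f_\varepsilon^\star\|_{\ell_1}<\infty$ because $\nu>1$; plugging into the second branch of $V_0$ and absorbing constants into $\max\{1,\|f_\varepsilon^\star\|_{\ell_1}\}$ gives the stated $\frac{\max\{1,\|f_\varepsilon^\star\|_{\ell_1}\}}{(2\pi)^2 n}L_i^{2\nu+1}$ (the constant $a_0^{-2}$ and the summation constant are what force the requirement $L_i\ge1$, and I will choose the normalization so the bare constant is $1$). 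For SS noise: $|f_\varepsilon^\star(l)|\ge a_0(|l|+1)^c e^{-b|l|^a}$ gives $\frac{1}{|f_\varepsilon^\star(l)|^2}\le a_0^{-2}(|l|+1)^{-2c}e^{2b|l|^a}$; summing over $|l|\le 2L_i$, the dominant term is at $|l|\approx 2L_i$ but a cleaner bound of the stated form $L_i^{-2c+1}e^{2bL_i^a}$ follows (the exponent $2bL_i^a$ rather than $2b(2L_i)^a$ requires using the $\ell_1$-convolution branch more carefully, or rescaling $b$; I will present whichever constant bookkeeping makes the displayed inequality exact). The main obstacle is precisely this constant-tracking in the SS case — ensuring the exponential rate reads $e^{2bL_i^a}$ and not $e^{2b(2L_i)^a}$ — which I expect is handled by exploiting the first branch $\big(\sum_{|l|\le L_i}\frac1{|f_\varepsilon^\star(l)|}\big)^2$ of the minimum when $a\ge1$, or by noting that in the relevant regime the prefactors $\|f_\varepsilon^\star\|_{\ell_1}$ and powers of $L_i$ absorb the discrepancy.
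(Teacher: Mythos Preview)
Your approach is essentially the same as the paper's. For the bias you both insert $(1+l^2)^{\pm\beta_i/2}$ and apply Cauchy--Schwarz; the paper's own computation also lands on $L_i^{-\beta_i+1/2}$ (the stated exponent $-\beta_i$ is a slight looseness of the paper, not something you need to recover by a finer argument). For the variance the paper expands the square exactly as you do and uses Cauchy--Schwarz on the inner sum over $l$ (with shifted index) rather than your AM--GM bound $\tfrac{1}{|f_\varepsilon^\star(l)||f_\varepsilon^\star(l')|}\le\tfrac12(|f_\varepsilon^\star(l)|^{-2}+|f_\varepsilon^\star(l')|^{-2})$; both routes reach $\|f_\varepsilon^\star\|_{\ell_1}\sum |f_\varepsilon^\star(l)|^{-2}$, and in fact your AM--GM gives the range $|l|\le L_i$ rather than $|l|\le 2L_i$, so it is marginally sharper. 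Your worry about the SS exponent $e^{2bL_i^a}$ versus $e^{2b(2L_i)^a}$ is handled in the paper exactly as you anticipate: the bound comes from the first (squared $\ell_1$) branch of the minimum, which only sums to $L_i$, and the paper is content with order-of-magnitude constants in that display.
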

The proof of Proposition \ref{var-hat-p} is given in Section \ref{preuve-proposition-CC}.

\subsubsection{Rates of convergence} \label{circular-rates}

When the noise $\varepsilon$ is ordinary smooth and the functions $p_i$ belong to a Sobolev class $\mathcal{W}(\beta_i, S_i)$, $i \in \{1, 2\}$, we deduce from Proposition~\ref{var-hat-p}, via the usual bias-variance tradeoff, that the optimal choice of the projection level is given by:
$$L_{i,{opt,OS}} = \arg \min_{L \in \N^*} \left \{ L^{-2\beta_i} + \frac{1}{n} L^{2\nu +1}  \right \} \Rightarrow
 L_{i,{opt,OS}}  \varpropto {n}^{\frac{1}{2\beta_i + 2 \nu +1 }},
 $$
and the rate of convergence for the pointwise risk to estimate $p_i$ is  $n^{-\frac{2\beta_i}{2\beta_i + 2\nu +1}}$. This corresponds to
 the usual minimax rate of convergence for estimating a univariate function of regularity $\beta_i$ in an ordinary ill-posed inverse problem of severity $\nu$ (see Chapter 3 in \cite{Meister}). One remarks that the optimal level $ L_{i,{opt, OS}}$ depends on the regularity $\beta_i$ which is unknown. This problem will be tackle in Section  \ref{adaptation} where we propose a data-driven procedure to select  $L_i$. 
\medskip

\noindent When the noise $\varepsilon$ is supersmooth, and the functions $p_i$ belong to a Sobolev class $\mathcal{W}(\beta_i, S_i)$, again from Proposition \ref{var-hat-p}, we deduce that the optimal levels are given by:
\begin{equation} \label{Lopt-SS}
L_{i, {opt, SS}} = \arg \min_{L \in \N^*} \left \{ L^{-2\beta_i} + \frac{1}{n} L^{-2c +1}  e^{2bL^a }\right \} \Rightarrow
L_{i,{opt, SS}}  \varpropto L_{{opt, SS}} := \left( \frac{\log n }{2b}\right)^{\frac 1 a }.
\end{equation}
 The subsequent rates of convergence for the pointwise risk to estimate  $p_i$ are  $\left( {\log n}\right )^{\frac {-2 \beta_i }{a}  }$.

\medskip


 In the case of a supersmooth noise, unlike in the OS noise setting, the optimal level $L_{opt, SS}$ does not depend on the regularity of the functions $p_i$, but solely on the noise parameters, which are assumed to be known. As a consequence, no level selection procedure is required in this case.

%
%
\subsection{Adaptive estimation}\label{adaptation}
As shown in the previous section, when the noise is ordinary smooth, it is necessary to design a data-driven procedure for selecting the levels $L_1$ and $L_2$. To this end, we employ a Goldenshluger-Lepski selection rule adapted to projection methods and indirect observations (see \cite{Chichignoud-Hoang-PhamNgoc-Rivoirard} and \cite{Comte-Lacour}). Consider the following collection of levels: 
\begin{equation}\label{set-L}
\mathcal{L}= \left \{ L \in \{1, \dots ,  n   \}, \frac{\sum_{|l | \leq 2L }   \frac{1}{ |f^\star_{\varepsilon}(l)|^2} }{\left (\sum_{ | l | \leq L}  \frac{1}{ |f^\star_{\varepsilon}(l)| } \right )^2}  \geq \frac{\log n}{n } \right  \}. 
\end{equation}
Note that condition expressed in $(\ref{set-L})$ amounts to have $L  \precsim  \frac{n}{\log n}$. 

For $i \in \{1, 2 \}$, let us set
\begin{equation}\label{def-A-li}
A(L_i, x)=\sup_{L_i'\in \mathcal L} \left  \{| \widehat p_{i, L_i'}(x) - \widehat p_{i, L_i \wedge L_i'}(x)| -\sqrt{V(n, L_i') } \right \}_{+},
\end{equation}
with
$$
 V(n, L_i)= c_{0, i} \log(n) V_0(n, L_i),
$$
and $c_{0, i}>0$ is a tuning constant which will be specified later  while $V_0(n,L_i)$ has been defined in Proposition \ref{var-hat-p}. 
\medskip

Finally, we define our selected levels as 
 \begin{equation}
\hat L_i  := \arg \min_{L _i\in \mathcal L} \left  \{ A(L_i, x) + \sqrt{V(n, L_i)}  \right \}.
\label{equa:adaptive.selection.L1-L2:form}
\end{equation}

\bigskip

We now obtain the following theorem which states an oracle inequality.

\begin{thm}\label{theorem-proba-oracle}

Suppose that the noise $\varepsilon$ is OS (satisfying (\ref{OS-circle})).  Let $q\geq 1$ and 
$
\min \{c_{0, 1}, c_{0,2}\} \geq \frac{64(2 +q)^2 }{\min(1, \| f_{\varepsilon}^\star\|_{\ell_1})}.
$
Then with probability larger than $1-4n^{-q}$ we have 

\begin{equation} \label{proba-oracle}
| \widehat p_{i, \hat L_i} (x) -p_i(x)| \leq \inf_{L_i \in \mathcal L} \left \{ \frac{1}{ \pi}  \sum_{|l| \geq L_i} |p_i^\star(l)| + \frac 5 2 \sqrt{V(n,L_i) } \right \}, \quad i \in \{1, 2\}.
\end{equation}

\end{thm}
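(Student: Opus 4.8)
The plan is to follow the standard Goldenshluger--Lepski template adapted to the projection setting, where the key probabilistic input is a concentration (deviation) bound on $\widehat p_{i,L}(x) - p_{i,L}(x)$ uniformly over $L \in \mathcal L$. First I would introduce the shorthand notation $\widehat p_{i,L} := \widehat p_{i,L}(x)$, $p_{i,L} := p_{i,L}(x) = \E[\widehat p_{i,L}(x)]$, and write $\widehat p_{i,L}(x) = \frac{1}{n}\sum_{j=1}^n U_{i,L,j}$ where $U_{i,L,j} := \frac{1}{2\pi}\sum_{|l|\le L} \sin(\Theta_j)\tfrac{e^{\compi l Z_j}}{f^\star_\varepsilon(l)} e^{-\compi l x}$ (and analogously with $\cos$ for $i=2$). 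The centered variables $U_{i,L,j} - \E U_{i,L,j}$ are i.i.d., bounded by roughly $\frac{1}{2\pi}\sum_{|l|\le L} \tfrac{1}{|f^\star_\varepsilon(l)|}$ in modulus, with variance controlled by $(2\pi)^2 V_0(n,L)\cdot n$ via Proposition~\ref{var-hat-p}. Applying Bernstein's inequality (in its complex/two-dimensional form, splitting real and imaginary parts) with the deviation level $t = \tfrac12\sqrt{V(n,L)}$ and using that on $\mathcal L$ one has $\sqrt{V_0(n,L)} \gtrsim \sqrt{\log n / n}$ so that the sub-Gaussian term dominates, I would obtain, for a suitable choice of the constant $c_{0,i}$, that
$$
\P\Big( |\widehat p_{i,L}(x) - p_{i,L}(x)| \ge \tfrac12 \sqrt{V(n,L)}\, \Big) \le 2 n^{-(q+1)}
$$
for each $L \in \mathcal L$; a union bound over $|\mathcal L| \le n$ then gives this simultaneously for all $L \in \mathcal L$ on an event $\mathcal E$ with $\P(\mathcal E) \ge 1 - 2n^{-q}$. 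The precise bookkeeping of the numerical constant $64(2+q)^2/\min(1,\|f^\star_\varepsilon\|_{\ell_1})$ is the place where the two alternative bounds inside $V_0(n,L)$ (the $\ell_1$-type and the $\ell_2$-type) must be juggled: one uses the $\ell_1$ bound for the almost-sure envelope and the $\ell_2$ bound for the variance, which is exactly why $V_0$ is defined as the minimum of the two.

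On the event $\mathcal E$, the rest is the deterministic Goldenshluger--Lepski argument. The key observation is that for any fixed $L_i \in \mathcal L$ and any $L_i' \in \mathcal L$,
$$
|\widehat p_{i,L_i'}(x) - \widehat p_{i, L_i \wedge L_i'}(x)| \le |p_{i,L_i'}(x) - p_{i,L_i\wedge L_i'}(x)| + |\widehat p_{i,L_i'}(x) - p_{i,L_i'}(x)| + |\widehat p_{i,L_i\wedge L_i'}(x) - p_{i,L_i\wedge L_i'}(x)|,
$$
where the first term is a bias-difference bounded by $2 \cdot \frac{1}{2\pi}\sum_{|l|\ge L_i}|p_i^\star(l)|$ (since $p_{i,L}(x)-p_{i,L'}(x)$ is a partial sum of Fourier coefficients with indices of modulus at least $L\wedge L'$, hence at least $L_i$ when $L_i \le L_i'$; the case $L_i > L_i'$ makes it even smaller), and on $\mathcal E$ the last two terms are each $\le \tfrac12\sqrt{V(n,L_i')}$ (using $V(n,L_i\wedge L_i') \le V(n,L_i')$ since $V_0$ is nondecreasing in $L$). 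Subtracting $\sqrt{V(n,L_i')}$ and taking the positive part and the supremum over $L_i'$ yields
$$
A(L_i,x) \le \frac{1}{\pi}\sum_{|l|\ge L_i}|p_i^\star(l)|.
$$

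Finally I would combine this with the complementary bound: for the selected level $\hat L_i$,
$$
|\widehat p_{i,\hat L_i}(x) - p_i(x)| \le |\widehat p_{i,\hat L_i}(x) - \widehat p_{i, L_i \wedge \hat L_i}(x)| + |\widehat p_{i, L_i\wedge \hat L_i}(x) - p_i(x)|,
$$
bounding the first term by $A(L_i,x) + \sqrt{V(n,\hat L_i)}$ by definition of $A$, and the second term by a bias term plus $\tfrac12\sqrt{V(n,L_i\wedge \hat L_i)} \le \tfrac12\sqrt{V(n,L_i)}$ on $\mathcal E$; symmetrically, by the definition of $\hat L_i$ as the minimizer, $A(\hat L_i,x) + \sqrt{V(n,\hat L_i)} \le A(L_i,x) + \sqrt{V(n,L_i)}$, and $A(\hat L_i,x) \ge |\widehat p_{i,L_i}(x) - \widehat p_{i, L_i\wedge \hat L_i}(x)| - \sqrt{V(n,L_i)}$. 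Carefully chaining these inequalities (the standard ``$5/2$'' accounting: $A(L_i,x)$ contributes $\le \frac1\pi\sum_{|l|\ge L_i}|p_i^\star(l)|$, one $\sqrt{V(n,L_i)}$ from the $A(\hat L_i,x)+\sqrt{V(n,\hat L_i)} \le A(L_i,x)+\sqrt{V(n,L_i)}$ trade, plus $\tfrac32\sqrt{V}$ from the various $\tfrac12\sqrt{V}$ stochastic fluctuations and the bias $\frac{1}{2\pi}\sum_{|l|\ge L_i}|p_i^\star(l)|$ folded into the $\frac1\pi$ term) gives \eqref{proba-oracle}, and since we need the event $\mathcal E$ to hold for both $i=1$ and $i=2$ simultaneously, another union bound upgrades $1-2n^{-q}$ to $1-4n^{-q}$. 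The main obstacle is the first part: getting the Bernstein constants to line up exactly with the stated threshold on $c_{0,i}$, in particular correctly exploiting the $\min$ in the definition of $V_0$ so that the envelope and the variance are simultaneously controlled on the collection $\mathcal L$.
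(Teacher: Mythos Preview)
Your approach is essentially the paper's: Bernstein for the deviation $|\widehat p_{i,L}-p_{i,L}|$ at level $\tfrac12\sqrt{V(n,L)}$, a union bound over $\mathcal L$, and then the deterministic Goldenshluger--Lepski accounting on the good event. Two minor points of alignment with the paper: (i) the paper uses the three-term split $|\widehat p_{i,\hat L_i}-\widehat p_{i,L_i\wedge\hat L_i}|+|\widehat p_{i,L_i\wedge\hat L_i}-\widehat p_{i,L_i}|+|\widehat p_{i,L_i}-p_i|$, bounding the first two by $A(L_i,x)+\sqrt{V(n,\hat L_i)}$ and $A(\hat L_i,x)+\sqrt{V(n,L_i)}$ respectively and then invoking minimality of $\hat L_i$, which avoids the issue in your two-term version where the bias $|p_{i,L_i\wedge\hat L_i}-p_i|$ is not controlled by $\sum_{|l|\ge L_i}|p_i^\star(l)|$ when $\hat L_i<L_i$ (you do recover this via the extra inequality for $A(\hat L_i,x)$, but the three-term split is cleaner); (ii) the factor $4$ in $1-4n^{-q}$ is \emph{not} from unioning over $i\in\{1,2\}$---the theorem and its proof are per fixed $i$, and the $4$ comes from the two suprema defining the good event $\mathcal E_i$ (each failing with probability $\le 2n^{1-\alpha}$, $\alpha=q+2$).
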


The proof of Theorem \ref{theorem-proba-oracle} is given in Section \ref{preuve-theorem-proba-oracle}.

Theorem \ref{theorem-proba-oracle} highlights the bias-variance decomposition of the pointwise risk. The first term  of the r.h.s is a bias term (see Proposition \ref{var-hat-p}) while the second term is a variance term.

\subsubsection{Rates of convergence for  $m$}
In this section, we derive rates of convergence for estimating the target regression function $m$.

\medskip

\noindent In the case of an ordinary smooth noise, we set the final estimator of $m$ to be:
\begin{equation}\label{estimator-m-circular}
\widehat m_{\hat L}(x) := \Atan(\widehat p_{1, \hat L_1}(x), \widehat p_{2, \hat L_2}(x)).
\end{equation}
\noindent In the case of a supersmooth  noise, remind that no selection level is required (see  Section \ref{circular-rates}) and we have adaptation for free. 
Then, the estimator of $m$ is defined as:
$$
\widehat m_{L_{opt,SS}}(x) := \Atan(\widehat p_{1, L_{opt, SS} }(x), \widehat p_{2, L_{{opt, SS}} }(x)),
$$ 
with $L_{{opt, SS}}$ defined in (\ref{Lopt-SS}).
\medskip

The oracle inequality stated in Theorem~\ref{theorem-proba-oracle} serves as a key tool for deriving convergence rates for the estimation of $m$ in the presence of ordinary smooth noise. In contrast, obtaining convergence rates in the supersmooth  noise setting is a straightforward consequence of the proofs established for the ordinary smooth case. The next theorem  presents the convergence rates.

\begin{thm} \label{adaptive:thm:circular} 
Let $p_{1}$ belongs to $\mathcal{W}(\beta_{1}, S_{1})$ and $p_{2}$ belongs to $\mathcal{W}(\beta_{2}, S_{2})$. 
\begin{enumerate}
\item Assume that the noise $\varepsilon$ is OS (satisfying (\ref{OS-circle})) with $\nu >1$. Let $q \geq 1$ and suppose that  $\min\left\{c_{0, 1} ; c_{0, 2}\right\} \geq \frac{64 (2 +q)^2 }{\min(1, \| f_{\varepsilon}^\star\|_{\ell_1})}.
$ Then,  
	for $n$ sufficiently large, 
	\begin{equation*}
		\E \Big[ d_c( \widehat{m}_{\hat L}(x), m(x) )\Big]
		\leq \frac{C_1}{\delta^2}  \,  \max \left\{    \psi_{n}^2(\beta_{1}, \nu) ,  \psi_{n}^2(\beta_{2}, \nu)\right\} , 
	\end{equation*}
	where 
	the convergence rates are     $\psi_{n}(\beta_{i}, \nu) = \big(\log n/n \big)^{\frac{\beta_{j}}{2\beta_{i} + 2\nu +1 }}$, the constant $\delta$ is defined in~\eqref{deltawx} and $C_1$ is a constant depending on $\beta_{1}, \beta_{2}, S_{1}, S_{2},c_{0,1}, c_{0,2}$ and~$f_{\varepsilon}^\star$.
	
\item Assume that the noise $\varepsilon$ is SS  (satisfying (\ref{SS-circle})). For $n$ sufficiently large, 
	\begin{equation*}		\E \Big[ d_c( \widehat m_{L_{opt,SS}}(x), m(x) )\Big]
		\leq \frac{ C_2}{\delta^2}  \,  \max \left\{  \check  \psi_{n}^2(\beta_{1}, a) ,  \check \psi_{n}^2(\beta_{2}, a)\right\} , 
	\end{equation*}
	with $ \check \psi_{n}(\beta_{i}, a) =  \left( {\log n}\right )^{\frac {- \beta_i }{a}} $,  the constant $\delta$ defined in~\eqref{deltawx} and $C_2$ is a constant depending on $\beta_{1}, \beta_{2}, S_{1}, S_{2}$ and~$f_{\varepsilon}^\star$.
	\end{enumerate}
\end{thm}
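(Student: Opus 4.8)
The plan is to obtain the bounds of Theorem~\ref{adaptive:thm:circular} from the pointwise control of the preliminary estimators $\widehat p_{i,\widehat L_i}(x)$ --- already supplied by Theorem~\ref{theorem-proba-oracle} in the OS case and by Proposition~\ref{var-hat-p} in both cases --- through a single geometric inequality. The key point is that $d_c$ is, up to the factor $\tfrac12$, a squared Euclidean distance between unit vectors, and that by Definition~\ref{def:full.formula.atan2} the unit vector $(\cos\Atan(w_1,w_2),\sin\Atan(w_1,w_2))$ equals $(w_2,w_1)/\|(w_1,w_2)\|$, which has the same Euclidean norm as $(w_1,w_2)/\|(w_1,w_2)\|$. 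Setting $P(x):=(p_1(x),p_2(x))$, $\widehat P(x):=(\widehat p_{1,\widehat L_1}(x),\widehat p_{2,\widehat L_2}(x))$ and recalling \eqref{estimator-m-circular}, on $\{\widehat P(x)\neq0\}$ one gets
\[
d_c\big(\widehat m_{\widehat L}(x),m(x)\big)=\frac12\left\|\frac{\widehat P(x)}{\|\widehat P(x)\|}-\frac{P(x)}{\|P(x)\|}\right\|^2\le\frac{2\,\|\widehat P(x)-P(x)\|^2}{\|P(x)\|^2},
\]
using the elementary stability estimate $\big\|a/\|a\|-b/\|b\|\big\|\le 2\|a-b\|/\max(\|a\|,\|b\|)$, valid for all nonzero $a,b\in\R^2$. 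Since $\|P(x)\|^2=f_X(x)^2\big(m_1(x)^2+m_2(x)^2\big)\ge\delta_X^2\,\delta^2$ by Assumption~\ref{assumption:assumption.on.f_X}, Assumption~\ref{assumption:c_{low}} and \eqref{deltawx} (one checks $m_1(x)^2+m_2(x)^2\ge\delta^2$ in each of the three cases of \eqref{deltawx}), this gives the master inequality
\[
d_c\big(\widehat m_{\widehat L}(x),m(x)\big)\;\precsim\;\frac1{\delta^2}\Big(\big|\widehat p_{1,\widehat L_1}(x)-p_1(x)\big|^2+\big|\widehat p_{2,\widehat L_2}(x)-p_2(x)\big|^2\Big)
\]
on $\{\widehat P(x)\neq0\}$, while on $\{\widehat P(x)=0\}\subset\{\|\widehat P(x)-P(x)\|\ge\delta_X\delta\}$ one simply uses $d_c\le 2$.

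\emph{Part 1 (ordinary smooth noise).} I would work on the event $\mathcal E$ of probability at least $1-4n^{-q}$ on which the oracle inequality \eqref{proba-oracle} holds for $i\in\{1,2\}$. On $\mathcal E$ I bound $|\widehat p_{i,\widehat L_i}(x)-p_i(x)|$ by inserting a deterministic level $L_i$ of order $(n/\log n)^{1/(2\beta_i+2\nu+1)}$ into the infimum of \eqref{proba-oracle}: such an $L_i$ lies in $\mathcal L$ for $n$ large (it satisfies $L_i\precsim n/\log n$, the condition defining \eqref{set-L}); the bias term is $\precsim L_i^{-\beta_i}$ by the Sobolev membership $p_i\in\mathcal W(\beta_i,S_i)$ (cf.\ Proposition~\ref{var-hat-p}); and since $V(n,L_i)=c_{0,i}\log(n)V_0(n,L_i)$ with $V_0(n,L_i)\precsim L_i^{2\nu+1}/n$ in the OS case (Proposition~\ref{var-hat-p}), the variance term obeys $\sqrt{V(n,L_i)}\precsim\sqrt{(\log n)/n}\,L_i^{\nu+1/2}$. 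This choice of $L_i$ equilibrates the two contributions and yields $|\widehat p_{i,\widehat L_i}(x)-p_i(x)|\precsim\psi_n(\beta_i,\nu)$; plugged into the master inequality this gives $d_c(\widehat m_{\widehat L}(x),m(x))\precsim\delta^{-2}\max_i\psi_n^2(\beta_i,\nu)$ on $\mathcal E$. On $\mathcal E^c$, $\E[d_c\,\mathbf 1_{\mathcal E^c}]\le 8n^{-q}\le 8n^{-1}$, which for $n$ large is negligible against $\max_i\psi_n^2(\beta_i,\nu)$ because the exponents $2\beta_i/(2\beta_i+2\nu+1)$ are $<1$, whence $n^{-1}=o\big((\log n/n)^{2\beta_i/(2\beta_i+2\nu+1)}\big)$. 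Taking expectations and collecting the numerical constants (which also depend on $\delta_X,S_i,\beta_i,c_{0,i},f_\varepsilon^\star$) into $C_1$ proves Part~1.

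\emph{Part 2 (supersmooth noise).} Here $L_{opt,SS}$ from \eqref{Lopt-SS} is deterministic, so neither a selection rule nor the oracle inequality is needed: the bias--variance decomposition of Proposition~\ref{var-hat-p} gives directly
\[
\E\big|\widehat p_{i,L_{opt,SS}}(x)-p_i(x)\big|^2=\Var\big(\widehat p_{i,L_{opt,SS}}(x)\big)+\big|\E[\widehat p_{i,L_{opt,SS}}(x)]-p_i(x)\big|^2\precsim\frac{L_{opt,SS}^{-2c+1}e^{2bL_{opt,SS}^a}}{n}+L_{opt,SS}^{-2\beta_i},
\]
by the supersmooth variance bound of Proposition~\ref{var-hat-p} and $p_i\in\mathcal W(\beta_i,S_i)$. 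With $L_{opt,SS}$ of order $(\log n/(2b))^{1/a}$ --- calibrated, up to a lower-order $\log\log n$ correction in the exponent so that $e^{2bL_{opt,SS}^a}/n$ balances the bias and the polynomial factor $L_{opt,SS}^{-2c+1}$ is absorbed --- both terms are $\precsim(\log n)^{-2\beta_i/a}=\check\psi_n^2(\beta_i,a)$. Taking expectations in the master inequality and bounding $\P(\widehat P(x)=0)\le\E\|\widehat P(x)-P(x)\|^2/(\delta_X^2\delta^2)$ for the degenerate event gives $\E[d_c(\widehat m_{L_{opt,SS}}(x),m(x))]\precsim\delta^{-2}\max_i\check\psi_n^2(\beta_i,a)$, which is Part~2.

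\emph{Main obstacle.} The substantive step is the geometric reduction: $\Atan$ is neither defined nor Lipschitz near the origin and jumps by $2\pi$ across its branch cut, so there is no uniform first-order expansion of $(w_1,w_2)\mapsto\Atan(w_1,w_2)$ to lean on. The remedy is to never manipulate the angle itself but to pass to the attached unit vector, for which $d_c$ is exactly half a squared Euclidean distance and the only map involved is the globally well-behaved normalization $v\mapsto v/\|v\|$; the lower bound $\|P(x)\|\ge\delta_X\delta$ coming from Assumptions~\ref{assumption:c_{low}} and \ref{assumption:assumption.on.f_X} then produces the $1/\delta^2$ factor. A secondary point, flagged above, is the precise calibration of $L_{opt,SS}$ so that the polynomial correction $L^{-2c+1}$ (with $c$ possibly $\le\tfrac12$, as for the wrapped Normal and wrapped Cauchy errors) does not spoil the leading $(\log n)^{-2\beta_i/a}$ rate.
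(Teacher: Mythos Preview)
Your proof is correct and takes a genuinely different route from the paper. The paper proceeds by writing $d_c(\widehat m,m)=2\sin^2\!\big(\tfrac12(\Atan(\widehat p_1,\widehat p_2)-\Atan(p_1,p_2))\big)$, then splits into three cases according to whether $m_1(x)$ or $m_2(x)$ vanishes; in each case it argues that on the good event the signs of $\widehat p_i$ match those of $p_i$, converts the $\Atan$ difference into an $\arctan$ difference (carefully tracking the $\pm\pi$ branch jumps), and finally invokes the $1$-Lipschitz property of $\arctan$. Your approach bypasses all of this by recognising that $d_c$ is half a squared Euclidean distance between the unit vectors $(\widehat p_2,\widehat p_1)/\|\widehat P\|$ and $(p_2,p_1)/\|P\|$, and then applying the elementary normalisation stability bound $\|a/\|a\|-b/\|b\|\|\le 2\|a-b\|/\|P\|$. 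This is cleaner and more robust: there is no case distinction, no branch-cut bookkeeping, and the argument works verbatim in higher dimensions. The cost is only that your implicit constant picks up an extra factor $\delta_X^{-2}$ (from $\|P(x)\|\ge\delta_X\delta$), whereas the paper's case analysis divides directly by $|m_i(x)|$ --- but the paper's proof in fact also needs Assumption~\ref{assumption:assumption.on.f_X} to pass from $|p_i(x)|$ to $|m_i(x)|$, so this difference is cosmetic. Your flagging of the $L_{opt,SS}$ calibration issue (the polynomial factor $L^{-2c+1}$ when $c\le\tfrac12$) is also well taken; the paper glosses over it, and the standard fix is exactly the $\log\log n$ correction you describe. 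One minor point: the event on which \eqref{proba-oracle} holds simultaneously for both $i\in\{1,2\}$ has probability at least $1-8n^{-q}$ rather than $1-4n^{-q}$, but this changes nothing.
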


The proof of Theorem \ref{adaptive:thm:circular} is given in Section \ref{preuve-adaptive:thm:circular}.
\begin{remark}
Note that when $\beta_1 = \beta_2 = \beta$, the resulting rates are $\left(\log n / n\right)^{\frac{2\beta}{2\beta + 2\nu + 1}}$ and $\left(\log n\right)^{-2\beta / a}$, which correspond to the optimal rates for adaptive estimation of a univariate regression function and for pointwise risk in an errors-in-variables framework (see Section 3.3.2 in \cite{Meister}). The presence of a logarithmic factor in the rate under ordinary smooth noise is expected, since we deal with pointwise adaptive estimation.
\end{remark}

\section{Linear predictor} \label{section-linear}
In this section, we consider the case where the predictor $X \in [0,1]$.
 \subsection{Model}
 We study the following  model:

$$
\Theta= m(X) + \zeta \quad  \textrm{(mod $2\pi$)}
$$
with $\Theta, \zeta \in \mathbb{S}^1$ and $X \in [0,1]$. The variable $X$ is unobserved and we only have access to 
$$
Z = X + \varepsilon.
$$
 We assume that  $X$ and $\varepsilon$ are independent and  the  characteristic function of density the $f_\varepsilon$  is known. 
 From an i.i.d dataset $(Z_1, \Theta_1), \dots, (Z_n,\Theta_n)  $, we want to estimate $m: [0,1] \mapsto \mathbb{S}^1$.

\subsubsection{Assumptions}

We present our results in terms of  H\"older classes that are adapted to local estimation.
\begin{definition}\label{definition:holder.class} 
	{Let $\beta > 0$ and $\Lambda > 0$. The H\"older class $\mathcal{H}(\beta,\Lambda)$ is the set of functions $f : (0,1) \longmapsto \mathbb{R}$, such that $f$ admits derivatives up to the order $\lfloor \beta \rfloor$, and for any $(y,\widetilde{y})\in(0,1)^2$,
		\begin{align*}
			\left| \dfrac{ d^{\lfloor \beta \rfloor} f }{ (d y )^{\lfloor \beta \rfloor} } (\widetilde{y}) - \dfrac{ d^{\lfloor \beta \rfloor} f }{ (d y)^{\lfloor \beta \rfloor} } (y) \right|  \leq  \Lambda  \hspace{0.1cm}   \big|\widetilde{y}-y\big|^{\beta - \lfloor \beta \rfloor}  .
		\end{align*}
} \end{definition}
\noindent

We make the following assumptions on the covariates' noise.

\begin{assumption}  
The noise $\varepsilon$ is called ordinary smooth (OS) if there exist $r, c_{\varepsilon}, C_{\varepsilon} >0$ such that  
\begin{align}
c_{\varepsilon} \big( 1 + |t| \big)^{-r}  \leq  \big| f_{\varepsilon}^{\star}(t) \big|  \leq   C_{\varepsilon} \big( 1 + |t| \big)^{-r} , \hspace{0.2cm}  
\quad  t \in \R \,  . 
	\label{assumption:varepsilon:ordinary.smooth:form}
\end{align}
The noise $\varepsilon$ is called supersmooth (SS) if
there exist constants    ${c}_{\varepsilon} > 0$,  ${C}_{\varepsilon} > 0$, $\gamma > 0$, $\rho > 0$ and $\rho_{0}, \rho_{1} \in \R$ such that 	
\begin{align}
		{c}_{\varepsilon} \big( 1 + |t|^{2} \big)^{-\rho_{0}/2}  \exp \big( - \gamma  |t|^{\rho} \big)   \leq  \big| f_{\varepsilon}^{\star}(t) \big|  \leq    {C}_{\varepsilon} \big( 1 + |t|^{2} \big)^{-\rho_{1}/2}  \exp \big( - \gamma  |t|^{\rho} \big) , \quad   t \in \R . 
	\label{assumption:varepsilon:super.smooth:form}
\end{align}
\end{assumption}    

Due to the linear covariates, we shall use kernel estimation.

\begin{definition} \em{
		Let $K: \mathbb{R} \rightarrow \mathbb{R}$ be an integrable function on $\R$. 
		We say that $K$ is a kernel if it satisfies $\int_{\mathbb{R}} K(y)dy = 1$. For $h > 0$, we define $K_{h}(\cdot) := \frac{1}{h} K(\frac{\cdot}{h})$. 
		Also, define 
		\begin{align}
			M(K) := \left\{  
			\begin{array}{cl}
				\left\| K \right\|_{1}  &,   \textrm{ if } \left\| K \right\|_{1}  <  \infty 
				\\[0.2cm]  
				\left\| K^{\star}   \right\|_{\infty}  &,   \textrm{ otherwise }  
			\end{array}
			\right.    .  
			\label{equa:definition.M(K):form}
		\end{align}
}\end{definition}

We now consider the following assumptions on the kernel $K$, following \cite{Comte-Lacour}. A classical one to control the bias is:
\begin{assumption}\label{assumption:kernel.K} \emph{
		The kernel $K$ is of order $\mathcal{\kappa} \in \mathbb{R}_{+}$, i.e.
		\begin{enumerate}
			\item[(i)] $C_{K, \mathcal{\kappa}} :=  \int_{\mathbb{R}}(1+ |y|)^{\mathcal{\kappa}} \,  |K(y)| dy < \infty $ ;
			\item[(ii)]  $\forall k \in \left\{ 1,...,\lfloor  \mathcal{\kappa} \rfloor  \right\}$, $\int_{\mathbb{R}} y^{k} \,  K(y) dy = 0 $.
		\end{enumerate}
} \end{assumption}

Another one to ensure the estimators remain finite is:
\begin{assumption} 
\label {assumption:K^star:ordinary.smooth.noise:form}
\em{  
 If $f_{\varepsilon}^{\star}$ satisfies Assumption~\eqref{assumption:varepsilon:ordinary.smooth:form}, we assume there exists $C_{(K^{\star})} > 0$ (for simplicity, we omit the dependence on $r$ in the notation of $C_{(K^{\star})}$) such  that  
\begin{align*}  
	\int_{\R} \big|  K^{\star}(t)  \big| (1 + |t|)^{r}  dt <   \sqrt{ C_{(K^{\star})} }   \hspace{0.5cm}  \textrm{ and } \hspace{0.5cm}  \int_{\R} \big|  K^{\star}(t)  \big|^{2}  (1 + |t|)^{2r}  dt  <  C_{(K^{\star})}   .   
\end{align*}  
On the other hand, if  $f_{\varepsilon}^{\star}$ satisfies Assumption~\eqref{assumption:varepsilon:super.smooth:form}  
we will consider the sinc kernel $K(y) = \frac{\sin(y)}{\pi y}$ for $y \in \R \char92 \{0\}$ and $K(0) = \frac{1}{\pi}$,  whose Fourier Transform has a compact support, more precisely, $K^{\star}(t) = \mathbf{1}_{[-1, 1]}(t)$  for $t \in \mathbb{R}$. The sinc kernel has  $M(K) = \left\| K^{\star} \right\|_{\infty} = 1$ and satisfies Assumption~\ref{assumption:K^star:ordinary.smooth.noise:form}.      

} \end{assumption}



\subsection{Preliminary estimators}  
Now, with $j \in \left\{ 1 ; 2 \right\}$, for $x \in [0, 1]$ and a bandwidth  $h_{j} > 0$, we propose an estimator for $p_{j}$ as   
\begin{align}
\widehat{p}_{1 , h_{1}}(x) &:=  \dfrac{1}{n}  \displaystyle{ \sum_{k = 1}^{n} }  \Big(  \sin(\Theta_k) \dfrac{1}{2\pi} \int_{\R}  e^{-\mathrm{i}tx} e^{\mathrm{i} t Z_{k}} \dfrac{ K_{h_{j}}^\star(t) }{ f^{\star}_\varepsilon(t) } dt \Big) , \\
\widehat{p}_{2 , h_{2}}(x) &:=  \dfrac{1}{n}  \displaystyle{ \sum_{k = 1}^{n} }  \Big(  \cos(\Theta_k) \dfrac{1}{2\pi} \int_{\R}  e^{-\mathrm{i} tx} e^{\mathrm{i} t Z_{k}} \dfrac{ K_{h_{j}}^\star(t) }{ f^{\star}_\varepsilon(t) } dt \Big).
 \label{formula:estimate.p1-p2.fixed-h}
\end{align}
The form of our estimators for $p_1$ and $p_{2}$ are justified by the fact that, for any $x \in [0,1]$  and $h_{1}, h_{2} > 0$, \begin{align}
\E \big( \widehat{p}_{1,h_{1}}(x) \big) &= 
K_{h_{1}} \ast p_{1}(x)   \quad   \textrm{  and  }  \quad  \E \big(  \widehat{p}_{2,h_{2}}(x) \big) = K_{h_{2}} \ast p_{2}(x).    
\label{form:compute.expetation.p_{1,h1}.p_{2,h2}}
\end{align}   
Indeed we have
for $x \in  [0,1]$  and $h_{1} > 0$,   using the independence between $X$ and $\varepsilon$ and Parseval's equality:
\begin{align*}
	\E \big( \widehat{p}_{1,h_{1}}(x) \big) &= \E \Big(\sin(\Theta_{1}) \dfrac{1}{2\pi} \int_{\R}   e^{-\mathrm{i} tx} e^{\mathrm{i} t Z_{1}} \dfrac{ K_{h_{1}}^\star(t) }{ f^\star_\varepsilon(t) } dt \Big) 
	\\
	&= \dfrac{1}{2\pi} \int_{\R}   e^{-\mathrm{i} tx}  \hspace{0.1cm}  \E \Big( \E \big[   \sin(\Theta_{1}) \, | \, X_{1} \big]  \hspace{0.1cm}       e^{\mathrm{i} t X_{1}} \Big) \E \Big( e^{\mathrm{i} t  \varepsilon_{1}}  \Big) \dfrac{ K_{h_{1}}^\star(t) }{ f^\star_\varepsilon(t) } dt
	\\
	&=  K_{h_1} \ast p_{1}(x),
\end{align*}  
and the same  computations handle for $ \widehat{p}_{2,h_{2}} $.


\subsection{Bias and variance}
Let us derive the bias and variance of our estimates. 
\begin{proposition}\label{lem:point-wise:mean-var.p1-p2}
	Let $j\in\{1,2\}$. Suppose that $p_{j}$ belongs to  $\mathcal{H}(\beta_{j}, \Lambda_{j})$, with $\Lambda_{j},\beta_{j} \in \R^{*}_{+}$. Suppose that the kernel $K$ satisfies Assumption~\ref{assumption:K^star:ordinary.smooth.noise:form} and  Assumption~\ref{assumption:kernel.K} with an index $\mathcal{\kappa} \in \mathbb{R}_{+}$ such that $\mathcal{\kappa} \geq   \beta_{j}$.  Then 	\begin{align*}
		\Big| \mathbb{E}\big(\widehat{p}_{j , h_{j} }(x)\big) - p_{j}(x) \Big|  
		\leq     C_{K,\mathcal{\kappa}} \,  \Lambda_{j} \,  h_{j}^{\beta_{j}} , 
		\,  &\textrm{ and }  \,
		\Var\big( \widehat{p}_{j , h_{j} }(x) \big)  \leq  
		\widetilde V_{0}(n,h_{j}),
	\end{align*}
	with 	the constant  $C_{K,\mathcal{\kappa}}$ defined in Assumption~\ref{assumption:kernel.K} and 
	\begin{align*}
	\widetilde V_{0}(n,h_{j}) = \frac{1}{(2\pi)^{2} n} \min  \left\{ \left\|  \dfrac{ \hspace{0.01cm}  K_{h_{j}}^{\star} \hspace{0.01cm}   }{f_{\varepsilon}^{\star}}    \right\|_{2}^{2}  \left\| f_{\varepsilon}^{\star} \right\|_{1} \, ; \,  \left\|  \dfrac{ \hspace{0.01cm}  K_{h_{j}}^{\star} \hspace{0.01cm}   }{f_{\varepsilon}^{\star}}    \right\|_{1}^{2}    \right\} \hspace{0.01cm} . 
	\end{align*}
	Furthermore, we have that
\begin{enumerate}	
\item If the noise ${\varepsilon}$ is OS (satisfying~\eqref{assumption:varepsilon:ordinary.smooth:form}) with $r  >  1$, then if $h_j \leq 1$,
	\begin{align*}
		\widetilde V_{0}(n,h_{j})  \leq  \dfrac{    C }{(2\pi)^{2}} \,   n^{-1} h_{j}^{-1 - 2\r}.
	\end{align*}
\item If the noise ${\varepsilon}$ is SS (satisfying~\eqref{assumption:varepsilon:super.smooth:form}) then   	
			\begin{align*}
	\widetilde V_{0}(n,h_{j})  \leq  
	\dfrac{    C}{ (2\pi)^{2} }   \,   n^{-1} h_{j}^{(\rho - 1 )_{+} }     h_{j}^{-2\rho_{0} - 1 + \rho}  \exp \big( 2 \gamma   \,    h_{j}^{- \rho}  \big) 
	\,  .    
	\end{align*}
	\end{enumerate}
\end{proposition}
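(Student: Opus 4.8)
The plan is to treat the bias and variance separately, and within each of these, the general bound first, then specialize to the OS and SS noise regimes. For the \emph{bias}, I would start from the identity \eqref{form:compute.expetation.p_{1,h1}.p_{2,h2}}, namely $\mathbb{E}(\widehat p_{j,h_j}(x)) = K_{h_j} * p_j(x)$, which is already established in the excerpt. So the bias is the standard kernel-smoothing bias $K_{h_j}*p_j(x) - p_j(x)$. I would write this as $\int_{\R} K(u)\big(p_j(x-h_j u) - p_j(x)\big)\,du$, then Taylor-expand $p_j$ to order $\lfloor\beta_j\rfloor$ at $x$; the polynomial terms of degrees $1,\dots,\lfloor\beta_j\rfloor$ vanish by the moment condition Assumption~\ref{assumption:kernel.K}(ii) (using $\kappa\geq\beta_j\geq\lfloor\beta_j\rfloor$, so the kernel has enough vanishing moments), and the remainder term is controlled using the H\"older condition from Definition~\ref{definition:holder.class}, producing a factor $|h_j u|^{\beta_j}$. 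Integrating against $|K(u)|(1+|u|)^\kappa$ gives the bound $C_{K,\kappa}\Lambda_j h_j^{\beta_j}$ after absorbing the binomial/Taylor constants into $C_{K,\kappa}$. Some care is needed at the boundary of $(0,1)$, but this is handled in the same way as in \cite{Comte-Lacour}.

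For the \emph{variance}, since $\widehat p_{j,h_j}(x) = \frac1n\sum_{k=1}^n W_k$ with $W_k := (\text{$\sin$ or $\cos$})(\Theta_k)\,\frac{1}{2\pi}\int_{\R} e^{-\mathrm{i}tx}e^{\mathrm{i}tZ_k}\frac{K_{h_j}^\star(t)}{f_\varepsilon^\star(t)}\,dt$ i.i.d., we have $\Var(\widehat p_{j,h_j}(x)) \leq \frac{1}{n}\mathbb{E}|W_1|^2$. The sine/cosine factor is bounded by $1$ in modulus, so $\mathbb{E}|W_1|^2 \leq \frac{1}{(2\pi)^2}\mathbb{E}\big|\int_{\R} e^{\mathrm{i}t(Z_1-x)}\frac{K_{h_j}^\star(t)}{f_\varepsilon^\star(t)}\,dt\big|^2$. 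Here I would present \emph{two} bounds, yielding the min in $\widetilde V_0$. The crude bound is $\big|\int \cdots\big| \leq \big\|K_{h_j}^\star/f_\varepsilon^\star\big\|_1$ deterministically, giving the term $\big\|K_{h_j}^\star/f_\varepsilon^\star\big\|_1^2$. For the sharper bound, I would expand the square as a double integral over $(t,s)$, take the expectation inside, and use $\mathbb{E}\big(e^{\mathrm{i}(t-s)Z_1}\big) = f_Z^\star(s-t) = f_X^\star(s-t)f_\varepsilon^\star(s-t)$, where $\|f_X^\star\|_\infty\leq \|f_X\|_1 = 1$; bounding $|f_X^\star(s-t)|\leq 1$ and using Young/Cauchy--Schwarz on the resulting convolution-type integral $\int\int \frac{|K_{h_j}^\star(t)|}{|f_\varepsilon^\star(t)|}\frac{|K_{h_j}^\star(s)|}{|f_\varepsilon^\star(s)|}|f_\varepsilon^\star(s-t)|\,dt\,ds \leq \big\|K_{h_j}^\star/f_\varepsilon^\star\big\|_2^2\,\|f_\varepsilon^\star\|_1$ gives the other term. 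Taking the minimum of the two yields $\widetilde V_0(n,h_j)$.

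Finally, the two specializations are purely computational, plugging the noise bounds \eqref{assumption:varepsilon:ordinary.smooth:form} and \eqref{assumption:varepsilon:super.smooth:form} into $\widetilde V_0$. In the OS case, $K_{h_j}^\star(t) = K^\star(h_j t)$, so $\big\|K_{h_j}^\star/f_\varepsilon^\star\big\|_2^2 \leq c_\varepsilon^{-2}\int |K^\star(h_j t)|^2(1+|t|)^{2r}\,dt$, and a change of variable $u = h_j t$ with $h_j\leq 1$ and Assumption~\ref{assumption:K^star:ordinary.smooth.noise:form} produces $h_j^{-1-2r}$ up to a constant (and $\|f_\varepsilon^\star\|_1<\infty$ since $r>1$); one checks the other term in the min is of the same or larger order. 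In the SS case one uses the sinc kernel with $K^\star = \mathbf 1_{[-1,1]}$, so $K_{h_j}^\star = \mathbf 1_{[-1/h_j,1/h_j]}$ and the integrals reduce to $\int_{|t|\leq 1/h_j}(1+t^2)^{\rho_0}\exp(2\gamma|t|^\rho)\,dt$, which is dominated by its value near the endpoint $|t|\approx 1/h_j$; a standard Laplace-type estimate gives $h_j^{-2\rho_0-1+\rho}\exp(2\gamma h_j^{-\rho})$, and the extra $h_j^{(\rho-1)_+}$ comes from comparing this $\ell_2$-type bound with the $\ell_1$-type bound $\big\|K_{h_j}^\star/f_\varepsilon^\star\big\|_1^2$ and keeping the smaller. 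The main obstacle is really just bookkeeping: getting the exponents in the SS variance bound exactly right (the $(\rho-1)_+$ factor, in particular, requires carefully comparing the two terms in the min over the relevant range of $h_j$), together with the careful justification of the order-$\kappa$ Taylor argument for the bias at points near the boundary of $(0,1)$.
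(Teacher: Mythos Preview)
Your proposal is correct and follows essentially the same route as the paper's proof: the bias is the standard kernel-smoothing bias handled by the order-$\kappa$ Taylor/moment argument (the paper simply cites \cite[Lemma 5.1]{Nguyen-PhamNgoc-Rivoirard} for this), and the two variance bounds are obtained exactly as you describe, by the crude $\|\cdot\|_1$ bound and by expanding the square, using $f_Z^\star=f_X^\star f_\varepsilon^\star$ with $|f_X^\star|\le 1$, and Cauchy--Schwarz. For the SS specialization the paper invokes \cite[Lemma~1]{Comte-Lacour} (your ``Laplace-type estimate'') to evaluate $\int_{|t|\le 1/h_j}(1+t^2)^{\rho_0}\exp(2\gamma|t|^\rho)\,dt$, and the factor $h_j^{(\rho-1)_+}$ indeed arises from comparing the $\|\cdot\|_2^2\|f_\varepsilon^\star\|_1$ and $\|\cdot\|_1^2$ terms, just as you say.
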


The proof of Proposition \ref{lem:point-wise:mean-var.p1-p2} is done in Section \ref{preuve-lem:point-wise:mean-var.p1-p2}.
\medskip

From Proposition \ref{lem:point-wise:mean-var.p1-p2}, we deduce by the bias variance tradeoff the following rates of convergence to estimate the function $p_i$, which are similar to the circular covariates case: 
\begin{enumerate}
\item  If the noise ${\varepsilon}$ is OS, the optimal bandwidth is of order ${n}^{-\frac{1}{2 \beta_j + 2 r +1 }}$ which yields a convergence rate of order $n^{\frac{-2 \beta_j}{2 \beta_j + 2 r +1 }}$. Consequently, one needs a data-driven bandwidth selection since the optimal bandwidth depends on $\beta_j$ which is unknown. This will be tackled in the next section.  
\item  If the noise  ${\varepsilon}$ is SS, then the optimal bandwidth does not depend on the regularity $\beta_j$ and is of order  
\begin{equation}\label{rate-pi-linear}
h_{opt, SS} := \left (\frac{\log n }{2 \gamma} \right )^{-\frac 1 \rho} 
\end{equation}

and there is no need to select a bandwidth. The subsequent rates of convergence are of order $ (\log n )^{\frac{-2 \beta_j}{\rho}}$. 
\end{enumerate}

\subsection{Adaptive estimation 
}
\label{sec:adaptive:oracle-inequalities.rate-convergence:m}

We now focus on  the case of ordinary smooth noise. Let us describe how to select in a data-driven way the bandwidths $h_1$ and $h_2$ for the estimators $\widehat{p}_{1,h_1}(x)$ and $\widehat{p}_{2,h_2}(x)$, respectively. This selection will be done among a convenient grid $\mathcal{H}$, defined as follows    
\begin{align}
	\mathcal{H} =   \left\{  h  =  k^{-1}      
 ,  k \in \mathbb{N}^{*} , k  \leq n : 	
\hspace{0.1cm}  \left\| \dfrac{K^{\star}_{h} }{ f_{\varepsilon}^{\star} }   \right\|^2_{2}   \Bigg( \left\| \dfrac{K^{\star}_{h} }{ f_{\varepsilon}^{\star} }   \right\|_{1} \Bigg)^{-2}  \geq  \dfrac{\log n}{n}    
	\right\} .  
	\label{def:bandwidth.colletion.H_n}
\end{align}    
Again, we propose a  Goldenshluger-Lepski selection procedure to select ${h}_{1}$ and ${h}_{2}$. More precisely,  for $x \in [0,1]$, we define:
\begin{align*}  
\widetilde A(h_{j},x )  = \sup_{h_{j} '\in \mathcal{H}} \left\{ \big| \widehat{p}_{j, h_{j} , h_{j}'}(x) -  \widehat{p}_{j, h_{j}'}(x) \big| - \sqrt{ \widetilde{V}_{j}(n,h_{j}) }  \right\}_{+}, 
\end{align*}  
where $ \widetilde{V}_{j}(n,h_{j}) :=   \tilde c_{0,j}   \log (n)    \widetilde V_{0}(n,h_{j})$, $\tilde c_{0,j} > 0$ are tuning parameters and \begin{align*}
\widehat{p}_{j , h_{j} , h_{j}'}(x) := \big( K_{h_{j}'} \ast \widehat{p}_{j , h_{j}} \big)(x) 
\end{align*}
so that $\widehat{p}_{j,h_{j},h_{j}'}(x) = \widehat{p}_{j , h_{j}' , h_{j}}(x)$ for any $h_{j}, h_{j}' \in \mathcal{H}$.  
Thus, an adaptive bandwidth is selected as     
\begin{align}   
\widehat{h}_{j} := \underset{h_{j} \in \mathcal{H}}{\textrm{argmin}} \left\{  \widetilde A(h_{j}, x) + \sqrt{  \widetilde{V}(n,h_{j})  }  \right\}  .    
\label{equa:adaptive.selection.h1-h2:form}
\end{align}    

The following theorem establishes an oracle-type inequality for the estimators $\widehat{p}_{j , \widehat{h}_{j}}$. 
\begin{thm}\label{adaptive:prop:upper-bound.high-Proba:p1-p2}
	Consider the collection of bandwidths $\mathcal{H}$ defined in~\eqref{def:bandwidth.colletion.H_n}. Let $j\in\{1,2\}$, $q \geq  1$ and assume that  $ \tilde c_{0,j} \geq 16 \big(2 + q \big)^{2}  \big( 1 + M(K) \big)^{2}  \dfrac{1}{ \min \left\{ \left\| f^{\star}_{\varepsilon}  \right\|_{1}, 1 \right\} } $ with $M(K) < \infty$. Then,  with probability larger than $1 - 4 n^{-q}$,  
	\begin{align*}
		\big| { \widehat{p}_{j,\widehat{h}_{j}}( x ) } - p_{j}(x)  \big|   \leq  \inf_{h_{j} \in \mathcal{H}} \Big\{  \Big( 1 + 2  M(K) \Big) \hspace{0.1cm}  
		\left\| p_{j} - K_{h_{j}}*p_{j} \right\|_{\infty}  
		+  3 \hspace{0.1cm}   \sqrt{\widetilde{V}_{j}(n,h_{j})} \Big\} \hspace{0.2cm}  .  
	\end{align*}  
\end{thm}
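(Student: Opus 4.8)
The plan is to run the standard Goldenshluger--Lepski argument for indirect observations, in parallel with the circular case (Theorem~\ref{theorem-proba-oracle}): first isolate a high-probability event on which all the stochastic fluctuations entering the selection rule are absorbed by the penalties $\sqrt{\widetilde V_j(n,\cdot)}$, and then carry out a purely deterministic comparison on that event.

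First I would rewrite the relevant statistics as empirical means of i.i.d. bounded variables. Write $\varphi_1=\sin$, $\varphi_2=\cos$; then $\widehat p_{j,h}(x)$ and the doubly smoothed statistic $\widehat p_{j,h,h'}(x)=(K_{h'}*\widehat p_{j,h})(x)=\widehat p_{j,h',h}(x)$ are both of the form $\frac1n\sum_{k=1}^n U_k$, with $U_k=\varphi_j(\Theta_k)\,\frac1{2\pi}\int_{\R}e^{-\compi tx}e^{\compi tZ_k}\,w(t)\,dt$ and Fourier weight $w=K_h^\star/f_\varepsilon^\star$, respectively $w=K_h^\star K_{h'}^\star/f_\varepsilon^\star$. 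The computations underlying Proposition~\ref{lem:point-wise:mean-var.p1-p2} (Parseval's equality, independence of $X$ and $\varepsilon$, $|\varphi_j|\le1$) give $\E U_k=K_h*p_j(x)$, resp. $K_h*K_{h'}*p_j(x)$, together with $|U_k|\le\frac1{2\pi}\|w\|_1$ and $\Var(U_k)\le\frac1{(2\pi)^2}\min\{\|w\|_2^2\|f_\varepsilon^\star\|_1,\|w\|_1^2\}$. Since $\|K_{h'}^\star\|_\infty\le M(K)$, the smoothed statistic is controlled by the unsmoothed quantities up to a factor $M(K)$ ($M(K)^2$ for the variance), so that $\Var(\widehat p_{j,h,h'}(x))$ is of the order of $M(K)^2\min\{\widetilde V_0(n,h),\widetilde V_0(n,h')\}$; moreover the deterministic part of $\widehat p_{j,h,h'}-\widehat p_{j,h'}$ equals the smoothed bias $K_{h'}*(K_h*p_j-p_j)(x)$, of modulus at most $M(K)\,\|p_j-K_h*p_j\|_\infty$, while the factor $1+M(K)$ in the tuning constant arises from the bound $|K_h^\star-1|\le1+M(K)$ used when treating $\widehat p_{j,h,h'}-\widehat p_{j,h'}$ as a single average.

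Next I would build the good event. Applying Bernstein's inequality to all these averages and taking a union bound over $\mathcal H$ and the pairs $(h,h')\in\mathcal H^2$ (at most $n^2$ terms), the constraint defining $\mathcal H$ in~\eqref{def:bandwidth.colletion.H_n}, namely $\|K_h^\star/f_\varepsilon^\star\|_2^2\,\big(\|K_h^\star/f_\varepsilon^\star\|_1\big)^{-2}\ge\log n/n$, is precisely what makes the Bernstein (sub-exponential) term negligible against the Gaussian one, while the hypothesis $\tilde c_{0,j}\ge16(2+q)^2(1+M(K))^2/\min\{\|f_\varepsilon^\star\|_1,1\}$ forces the resulting exponent to be at least of order $(q+3)\log n$. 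This produces an event $\Omega_n$ with $\P(\Omega_n^c)\le4n^{-q}$ on which, simultaneously for all $h,h'\in\mathcal H$, the deviations $|\widehat p_{j,h}(x)-K_h*p_j(x)|$ and $\big|(\widehat p_{j,h,h'}-\widehat p_{j,h'})(x)-K_{h'}*(K_h*p_j-p_j)(x)\big|$ are dominated, up to a fixed fraction, by the corresponding penalty $\sqrt{\widetilde V_j(n,\cdot)}$; in particular, on $\Omega_n$, $\widetilde A(h,x)\le M(K)\,\|p_j-K_h*p_j\|_\infty$ and $|\widehat p_{j,h}(x)-p_j(x)|\le\tfrac12\sqrt{\widetilde V_j(n,h)}+\|p_j-K_h*p_j\|_\infty$ for every $h\in\mathcal H$.

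Finally, on $\Omega_n$ and for an arbitrary $h\in\mathcal H$, I would use the symmetry $\widehat p_{j,h,\widehat h_j}=\widehat p_{j,\widehat h_j,h}$ and the decomposition
\begin{equation*}
\big|\widehat p_{j,\widehat h_j}(x)-p_j(x)\big|\le\big|\widehat p_{j,\widehat h_j}(x)-\widehat p_{j,h,\widehat h_j}(x)\big|+\big|\widehat p_{j,h,\widehat h_j}(x)-\widehat p_{j,h}(x)\big|+\big|\widehat p_{j,h}(x)-p_j(x)\big|.
\end{equation*}
The first summand is $\le\widetilde A(h,x)+\sqrt{\widetilde V_j(n,h)}$ (take $h'=\widehat h_j$ in the definition of $\widetilde A(h,x)$), the second is $\le\widetilde A(\widehat h_j,x)+\sqrt{\widetilde V_j(n,\widehat h_j)}$ (take $h'=h$ in the definition of $\widetilde A(\widehat h_j,x)$), and by the definition~\eqref{equa:adaptive.selection.h1-h2:form} of $\widehat h_j$ as a minimizer their sum is at most $2\big(\widetilde A(h,x)+\sqrt{\widetilde V_j(n,h)}\big)$. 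Inserting $\widetilde A(h,x)\le M(K)\|p_j-K_h*p_j\|_\infty$ together with the above bound for the third summand, and taking the infimum over $h\in\mathcal H$, yields the announced inequality. I expect the main obstacle to be the constant tracking in the Bernstein/union-bound step: propagating the factors $M(K)$ and $1+M(K)$ produced by the double smoothing, and verifying that the two regimes in $\widetilde V_0(n,h)$ (the weighted $\ell_1$ and $\ell_2$ norms of $K_h^\star/f_\varepsilon^\star$), combined with the grid constraint~\eqref{def:bandwidth.colletion.H_n}, yield exactly the stated threshold on $\tilde c_{0,j}$ and the probability $4n^{-q}$; finiteness of all the norms involved rests on Assumption~\ref{assumption:K^star:ordinary.smooth.noise:form}.
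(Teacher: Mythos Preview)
Your proposal is correct and follows essentially the same Goldenshluger--Lepski argument as the paper: Bernstein concentration (the paper's Proposition~\ref{proposition:apply.Bernstein.inequa:study.adaptation}) combined with the grid constraint~\eqref{def:bandwidth.colletion.H_n} to tame the sub-exponential term, a union bound over $\mathcal H$ and $\mathcal H^2$, and then the three-term decomposition through $\widehat p_{j,h,\widehat h_j}$ together with the minimizing property of $\widehat h_j$. The only cosmetic difference is that you handle the stochastic part of $\widetilde A(h,x)$ by treating $\widehat p_{j,h,h'}-\widehat p_{j,h'}$ as a single empirical average via $|K_h^\star-1|\le 1+M(K)$, whereas the paper applies two separate Bernstein bounds to $\widehat p_{j,h'}$ and $\widehat p_{j,h,h'}$ with the penalty split as $\tfrac{1}{1+\|K^\star\|_\infty}\sqrt{\widetilde V_j}$ and $\tfrac{\|K^\star\|_\infty}{1+\|K^\star\|_\infty}\sqrt{\widetilde V_j}$; both routes yield the same factor $(1+M(K))^2$ in the threshold on $\tilde c_{0,j}$, and the paper takes $\alpha=q+2$ (not $q+3$) so that $4n^{2-\alpha}=4n^{-q}$.
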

\noindent
The proof of Theorem~\ref{adaptive:prop:upper-bound.high-Proba:p1-p2} is given in Section~\ref{sec:adaptive:prop:upper-bound.high-Proba:p1-p2:proof}. 

\subsubsection{Rates of convergence for $m$}

In the case where the noise  ${\varepsilon}$ is OS, the final estimator of the target regression $m$ is
\begin{equation}\label{estimator-m-linear}
\widehat{m}_{\widehat{h}}(x) =  \Atan \big( \widehat{p}_{1,\widehat{h}_{1}}(x), \widehat{p}_{2,\widehat{h}_{2}}(x) \big).
\end{equation}

\noindent In the case of a SS  noise, remind that no selection level is required, and we set the bandwidths to the value $h_{opt, SS}$ defined in (\ref{rate-pi-linear}). We then define the estimator of $m$  as
$$
\widehat m_{h_{opt, SS}}(x) := \Atan (\hat p_{1, h_{opt, SS}}(x), \hat p_{2, h_{opt, SS} }(x)).
$$ 

\medskip

\begin{thm} \label{adaptive:thm:pointwise-risk.upper-bound:m}
	Let $\beta_{1} , \beta_{2} ,   \Lambda_{1}, \Lambda_{2} > 0$. Suppose that $p_{1}$ belongs to $\mathcal{H}(\beta_{1}, \Lambda_{1})$, $p_{2}$ belongs to $\mathcal{H}(\beta_{2}, \Lambda_{2})$ and the kernel K satisfies Assumption~\ref{assumption:K^star:ordinary.smooth.noise:form} and Assumption~\ref{assumption:kernel.K} with an index $\kappa \geq \max \big\{ \beta_{1}  , \beta_{2}  \big\}$.
	
	\begin{enumerate}
\item Assume that the noise ${\varepsilon}$ is OS with $r>1$.	 Let $q \geq 1$ and suppose that  $\min\left\{\tilde c_{0, 1} ; \tilde c_{0, 2}\right\} \geq 16 \big(2 + q \big)^{2}  \big( 1 + M(K) \big)^{2}  \dfrac{1}{ \min \left\{ \left\| f^{\star}_{\varepsilon}  \right\|_{1} ; 1 \right\} }~$. Then,  
	for $n$ sufficiently large, 
	\begin{equation*}
		\E \Big[ d_c( \widehat{m}_{\widehat{h}}(x), m(x) )\Big]
		\leq \frac{\widetilde C_1}{\delta^2}  \,  \max \left\{    \psi_{n}^2(\beta_{1}, r) ;   \psi_{n}^2(\beta_{2}, r)\right\} , 
	\end{equation*}
	where 
	the convergence rates are $\psi_{n}(\beta_{j}, r) = \big( \log n/n \big)^{\frac{\beta_{j}}{2\beta_{j} + 2r+1}} $,  and the constant $\delta$ is defined in~\eqref{deltawx} and $\widetilde C_1$ is a constant depending on $\beta_{1},  \beta_{2},  \Lambda_{1},  \Lambda_{2},  c_{0,1}, c_{0,2},  K$ and~$f_{\varepsilon}$.

\item Assume that the noise ${\varepsilon}$ is SS. Then for $n$ sufficiently large we have 
\begin{equation*}
		\E \Big[ d_c( \widehat m_{h_{opt, SS}}(x) , m(x) )\Big]
		\leq \frac{\widetilde C_2}{\delta^2}  \,  \max \left\{     \check \psi_{n}^2(\beta_{1}, \rho) ;  \check  \psi_{n}^2(\beta_{2}, \rho)\right\} , 
	\end{equation*}
with $ \check \psi_{n}(\beta_{j}, \rho) = \big( \log n \big)^{ - \frac{\beta_{j}}{\rho} }$ and the constant $\delta$ is defined in~\eqref{deltawx} and $\widetilde C_2$ is a constant depending on $\beta_{1}, \beta_{2}, \Lambda_{1}, \Lambda_{2},c_{0,1}, c_{0,2}, K$ and~$f_{\varepsilon}$.
	\end{enumerate}
\end{thm}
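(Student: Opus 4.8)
The plan is to transfer the already-established control of the auxiliary estimators $\widehat p_{1},\widehat p_{2}$ to $\widehat m=\Atan(\widehat p_{1},\widehat p_{2})$ through a purely deterministic stability estimate for the map $\Atan$, and then to balance the resulting bias--variance bound over the bandwidth. First I would prove a deterministic lemma — the same one underlying Theorem~\ref{adaptive:thm:circular} — quantifying how errors on $(p_1,p_2)$ propagate to the angle $\Atan(p_1,p_2)$. Writing $d_c(\theta_1,\theta_2)=\tfrac12\big\|(\cos\theta_1,\sin\theta_1)-(\cos\theta_2,\sin\theta_2)\big\|^2$ and using that $\Atan$ depends only on the direction of its argument, one has $d_c(\Atan(a),\Atan(b))=\tfrac12\big\|a/\|a\|-b/\|b\|\big\|^2$ for $a,b\neq 0$. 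Since $v\mapsto v/\|v\|$ is Lipschitz with a constant of order $1/c$ on $\{\|v\|\ge c\}$ (and any two unit vectors are at distance $\le 2$), and since $\sqrt{p_1^2(x)+p_2^2(x)}=f_X(x)\sqrt{m_1^2(x)+m_2^2(x)}\ge\delta_X\,\delta$ by Assumption~\ref{assumption:c_{low}} and~\eqref{assumption:assumption.on.f_X}, this yields, for a constant $C$ depending only on $\delta_X$,
\[
d_c\big(\widehat m(x),m(x)\big)\ \le\ \frac{C}{\delta^{2}}\Big(|\widehat p_{1}(x)-p_{1}(x)|^{2}+|\widehat p_{2}(x)-p_{2}(x)|^{2}\Big),
\]
the inequality holding uniformly, in particular also when the right-hand side is large, because $d_c\le 2$. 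Taking expectations reduces both assertions to bounding $\E|\widehat p_{j,\widehat h_j}(x)-p_j(x)|^2$ (OS case) and $\E|\widehat p_{j,h_{opt,SS}}(x)-p_j(x)|^2$ (SS case), for $j\in\{1,2\}$.

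For the ordinary smooth case, I would split the expectation over the event of probability at least $1-4n^{-q}$ on which the oracle inequality of Theorem~\ref{adaptive:prop:upper-bound.high-Proba:p1-p2} holds (its hypothesis on $\tilde c_{0,j}$ being part of the present assumptions). On that event, $(u+v)^2\le 2u^2+2v^2$ gives $|\widehat p_{j,\widehat h_j}(x)-p_j(x)|^{2}\le C\inf_{h_j\in\mathcal H}\big\{\|p_j-K_{h_j}*p_j\|_\infty^{2}+\widetilde V_j(n,h_j)\big\}$; off that event I use $d_c\le 2$, which contributes at most $8n^{-q}$ to the risk and is negligible against $\psi_n^2(\beta_j,r)$ for every $q\ge 1$. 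For the infimum, the (uniform-in-$x$) bias part of Proposition~\ref{lem:point-wise:mean-var.p1-p2} and its OS variance part (using $r>1$ and $h_j\le 1$) give $\|p_j-K_{h_j}*p_j\|_\infty\le C_{K,\kappa}\Lambda_j h_j^{\beta_j}$ and $\widetilde V_j(n,h_j)=\tilde c_{0,j}\log(n)\widetilde V_0(n,h_j)\le C\log(n)\,n^{-1}h_j^{-1-2r}$. Taking $h_j$ equal to the largest element of $\mathcal H$ not exceeding $(\log n/n)^{1/(2\beta_j+2r+1)}$ — which exists and is of that order, since $\mathcal H\subseteq\{1/k:k\le n\}$ and the defining constraint of $\mathcal H$ in~\eqref{def:bandwidth.colletion.H_n} reduces under the OS assumption to $h^{-1}\precsim n/\log n$ — makes both terms of order $(\log n/n)^{2\beta_j/(2\beta_j+2r+1)}=\psi_n^2(\beta_j,r)$. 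Summing over $j=1,2$ and combining with the first step yields the first bound, $\widetilde C_1$ absorbing the constants.

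For the supersmooth case there is no data-driven choice, so I simply use the bias--variance decomposition $\E|\widehat p_{j,h_{opt,SS}}(x)-p_j(x)|^2=|K_{h_{opt,SS}}*p_j(x)-p_j(x)|^2+\Var(\widehat p_{j,h_{opt,SS}}(x))$. By Proposition~\ref{lem:point-wise:mean-var.p1-p2} the squared bias is $\le (C_{K,\kappa}\Lambda_j)^2 h_{opt,SS}^{2\beta_j}\approx(\log n)^{-2\beta_j/\rho}=\check\psi_{n}^{2}(\beta_{j},\rho)$, which is the announced rate; the SS variance bound of Proposition~\ref{lem:point-wise:mean-var.p1-p2}, combined with the identity $\exp\!\big(2\gamma h_{opt,SS}^{-\rho}\big)=n$ that defines $h_{opt,SS}$, shows $\Var(\widehat p_{j,h_{opt,SS}}(x))$ is, up to a fixed power of $\log n$, of constant order, and taking the bandwidth of order $h_{opt,SS}$ but with a $\log\log n$ correction that balances bias and variance keeps both of order $\check\psi_{n}^{2}(\beta_{j},\rho)$. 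Summing over $j$ and using the first step gives the second bound.

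The main obstacle is the deterministic stability estimate of the first step: one must obtain the correct $\delta^{-2}$ dependence and a bound valid \emph{globally} in the arguments of $\Atan$ (not only in a neighbourhood of $(p_1(x),p_2(x))$), which is precisely the feature that makes the circular-response risk tractable and connects the angular error to the Euclidean errors on $p_1,p_2$. Everything else is routine but slightly delicate bookkeeping: verifying that the balancing bandwidth belongs to the admissible grid $\mathcal H$, and that the low-probability term and the $\log\log n$ corrections are of negligible order relative to $\psi_n^2$ and $\check\psi_n^2$.
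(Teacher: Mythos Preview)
Your proof is correct and runs parallel to the paper's, but the central deterministic step is handled differently. The paper (for Theorem~\ref{adaptive:thm:circular}, which the proof of Theorem~\ref{adaptive:thm:pointwise-risk.upper-bound:m} simply mirrors with Lemma~\ref{adaptive:thm:pointwise-risk.upper-bound:p1-p2} in place of Lemma~\ref{proba-hat-p}) does not prove a single Lipschitz-type inequality for $\Atan$; instead it works on the high-probability event $E_1\cap E_2$, expands $d_c=2\sin^2(\cdot/2)$, and then performs a three-case analysis according to whether $m_1(x)$, $m_2(x)$, or both are nonzero, reducing in each case to the $1$-Lipschitz property of $\arctan$ and tracking signs of $\widehat p_{1},\widehat p_{2}$ explicitly. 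Your route---writing $d_c(\Atan(a),\Atan(b))=\tfrac12\|a/\|a\|-b/\|b\|\|^2$ and using the $O(1/c)$-Lipschitz property of $v\mapsto v/\|v\|$ on $\{\|v\|\ge c\}$, together with $d_c\le 2$ to cover the complement---yields the same $\delta^{-2}$ bound in one stroke and avoids the case splitting; it is cleaner and more geometric. The remainder (splitting on the oracle event, showing the balancing bandwidth lies in $\mathcal H$, absorbing the $n^{-q}$ term) is the same in both arguments. One minor point: your sentence ``taking expectations reduces both assertions to bounding $\E|\widehat p_{j}-p_j|^2$'' is slightly imprecise, since on the bad event you (correctly) bound $d_c$, not $|\widehat p_j-p_j|^2$; and your observation that $h_{opt,SS}$ may need a $\log\log n$ tweak to make the variance negligible is a genuine refinement that the paper leaves implicit in the phrase ``of order $h_{opt,SS}$''.
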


The proof of Theorem \ref{adaptive:thm:pointwise-risk.upper-bound:m} is done in Section \ref{sec:adaptive:thm:pointwise-risk.upper-bound:m:proof}.

We obtain similar convergence rates as in the case of circular predictors. 
When $\beta_1 = \beta_2 = \beta$, the resulting convergence rates are $\left(\log n / n\right)^{\frac{2\beta}{2\beta + 2r + 1}}$ and $\left(\log n\right)^{-2\beta / \rho}$, which match the minimax optimal rates for adaptive estimation of a univariate regression function and for pointwise risk in an errors-in-variables context. 


\section{Numerical results}\label{numerical-results}
In this section, we illustrate numerically our procedure. Both simulations and a real data example are considered.

\subsection{Simulations}

\subsubsection{Linear predictor case}
We consider the following linear-circular regression model:
\begin{align}
	\textrm{ (LC)}. \quad \Theta  &=  \Atan \big( 20 X - 11, (10 X - 5 )^{2} + 2 \big)  +  \zeta  \quad  \textrm{(mod $2\pi$)}, \label{regression.circular:numerical.simu:modelM1}
	\nonumber  
	\\
	Z &= X + \varepsilon ,
\end{align}
where the covariate $X \sim U([0,1])$ and the regression circular error  $\zeta$ is distributed according to a von Mises distribution $f_{v M(0,10)}$. Remind that the classical von  Mises distribution with location parameter $\mu \in  [-\pi,\pi) $ and  concentration parameter $\kappa >0$  denoted  whose density $f_{vM(\mu, \kappa)}$ is defined for any $\theta \in [-\pi,\pi)$ by
\begin{equation}\label{vmdensity}
	f_{vM(\mu, \kappa)}(\theta)=c(\kappa)  \exp \big({\kappa}  \cos( \theta - {\mu} ) \big),
\end{equation}
with $c(\kappa)$ a normalizing constant.

\medskip  

For the covariates' noise $\varepsilon$, we focus on the centered Laplace density with scale parameter $\sigma_{\varepsilon} > 0$, which has the following expression 
$	f_{\varepsilon}(x) = \dfrac{1}{2 \sigma_{\varepsilon} }  e^{- \frac{|x|}{\sigma_{\varepsilon}}}, \textrm{ with } x \in \R.$ 
Actually, the choice of the centered Laplace density is motivated by the fact that the Fourier transform of $f_{\varepsilon}$ is given by  
$	f_{\varepsilon}^{\star}(t)  =  \dfrac{1}{ 1 +  \sigma_{\varepsilon}^{2}  \,  t^{2} }  , \textrm{ with }  t  \in  \R  , $
which gives an example of ordinary smooth noises with degree of ill-posedness $r = 2$ (in Assumption \eqref{assumption:varepsilon:ordinary.smooth:form}).  

\medskip

Figure~\ref{fig:simulation:model1.ex4:sample_test1} displays an illustration of our statistical setting. 
\begin{figure}[ht!]
	\centering
	\includegraphics[scale=0.3]{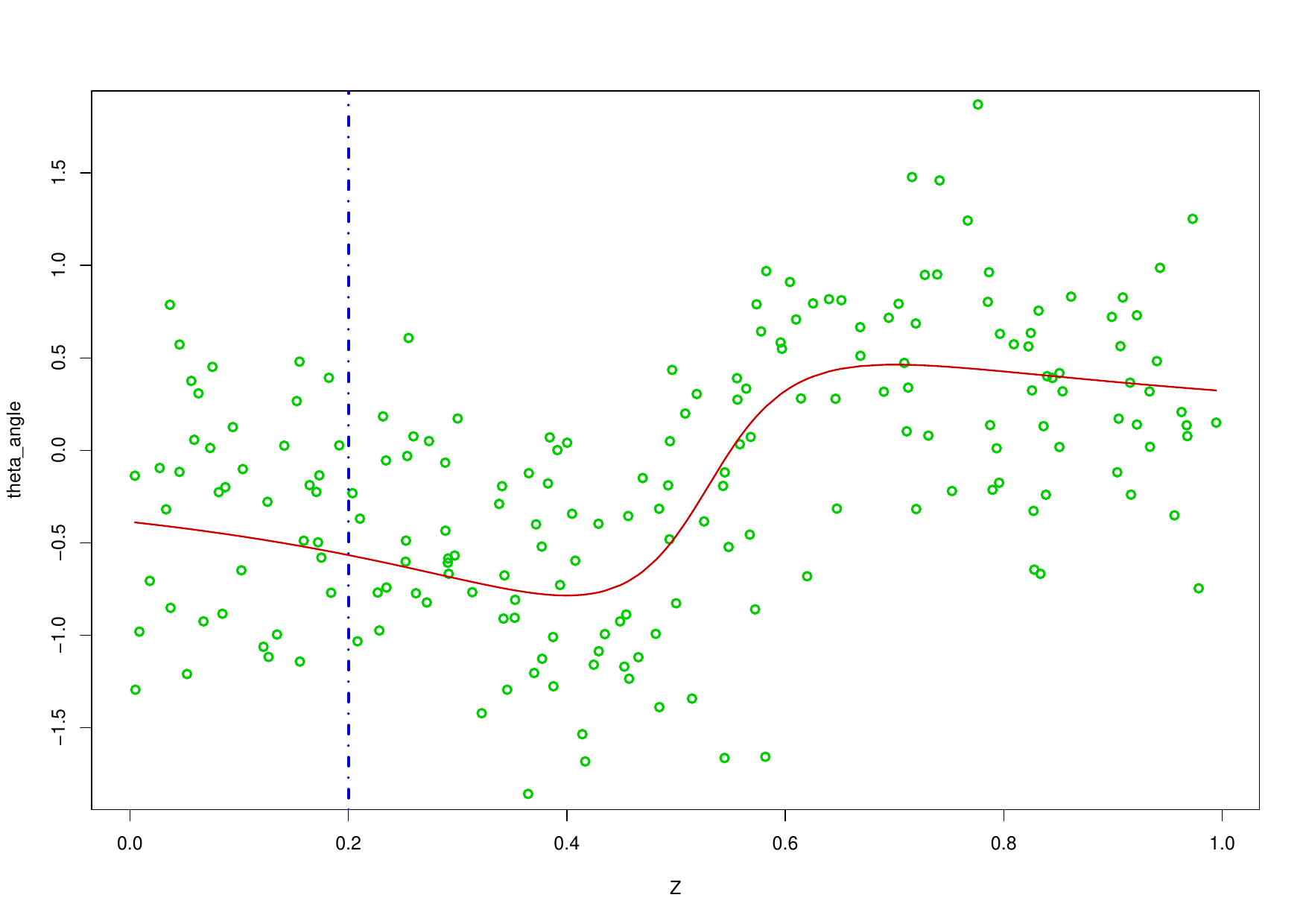}
	\vspace{-0.4cm}   
	\caption{Illustration of model (LC) with $n=200$. Simulated data $(\Theta_i )_{i=1}^{n}$ are displayed in green points. The red curve represents the regression function $m$, while the blue vertical line displays the point $x = 0.2$ where we estimate $m(x)$.}\label{fig:simulation:model1.ex4:sample_test1} 
\end{figure}

\medskip 

\noindent  

As we consider an ordinary smooth noise in the covariates, we implement the estimator $\widehat{m}_{\widehat{h}}(x) = \Atan \big( \widehat{p}_{1,\widehat{h}_{1}} (x) , \widehat{p}_{2,\widehat{h}_{2}} (x) \big)$ (defined in (\ref{estimator-m-linear})), where the bandwidths $\hat h_1$ and $\hat h_2$ are selected   according to the Goldenshluger-Lepski procedure (\ref{equa:adaptive.selection.h1-h2:form}). We consider the sinc kernel $K(x) = \frac{\sin(x)}{\pi x}$ for $x \in \R \char92 \{0\}$ and $K(0) = \frac{1}{\pi}$, for which the bandwidths are selected among the following collection  $\mathcal{H}_{n}$ defined as
$$\mathcal{H}_{n} := \left\{ k^{-1} : k \in \N , \; 1 \leq k \leq \frac{n}{\log n}  \right\}.$$ 

In the selection procedure, we need to tune the two parameters $\tilde c_{0,1}$ and $\tilde c_{0,2}$. 
To do this, we implement preliminary simulations to calibrate $\tilde c_{0,1}$ and $\tilde c_{0,2}$ by  considering model (LC), 
and for simplicity,  we consider the case $\tilde c_{0,1} = \tilde c_{0,2} = \tilde c_{0}$. We compute the following risk $\mathcal{R}$:

\begin{equation}\label{def:risk}
	\mathcal{R} := 1 - \cos \big( \widehat{m}_{\widehat{h}}(x) , m(x) \big), 
\end{equation}
and consider $\mathcal{R}$ as a function of $\tilde c_{0}$ on the following discretization grid:
\begin{equation*}
	G_{\tilde c_0} := \left\{ 0.001 ; \hspace{0.05cm} 0.0025; \hspace{0.05cm} 0.005; \hspace{0.05cm} 0.0075; \hspace{0.05cm} 0.01; \hspace{0.05cm} 0.025; \hspace{0.05cm} 0.05; \hspace{0.05cm} 0.075 ; \hspace{0.05cm} 0.1 ; \hspace{0.05cm} 0.2 ; \hspace{0.05cm} 0.3 ; \hspace{0.05cm} 0.4 ; \hspace{0.05cm} 0.6 ; \hspace{0.05cm} 0.8 ; \hspace{0.05cm} 1 ; \hspace{0.05cm} 2  ; \hspace{0.05cm} 4  \right\}.
\end{equation*}
The function $\mathcal{R}(\tilde c_0)$ is plotted  in Figure~\ref{fig:simulation:regression.circular:linear.predictor.crit.find.c01=c02.XLaplace-sd-noise=0.075:x=0.2} when estimating at $x = 0.2$, a centered Laplace distributed covariate noise $\textrm{Laplace}(0, 0.075)$, a sample size $n=200$ and $50$ Monte Carlo repetitions. The figure shows a plateau phenomenon and regarding this, the choice $\tilde c_{0,1} = \tilde c_{0,2} = 0.4$ is reasonable. In the sequel, we fix $\tilde c_{0,1} = \tilde c_{0,2} = 0.4$ for subsequent numerical simulations. 

\begin{figure}[ht!]
\begin{center}
	\includegraphics[scale=0.30]{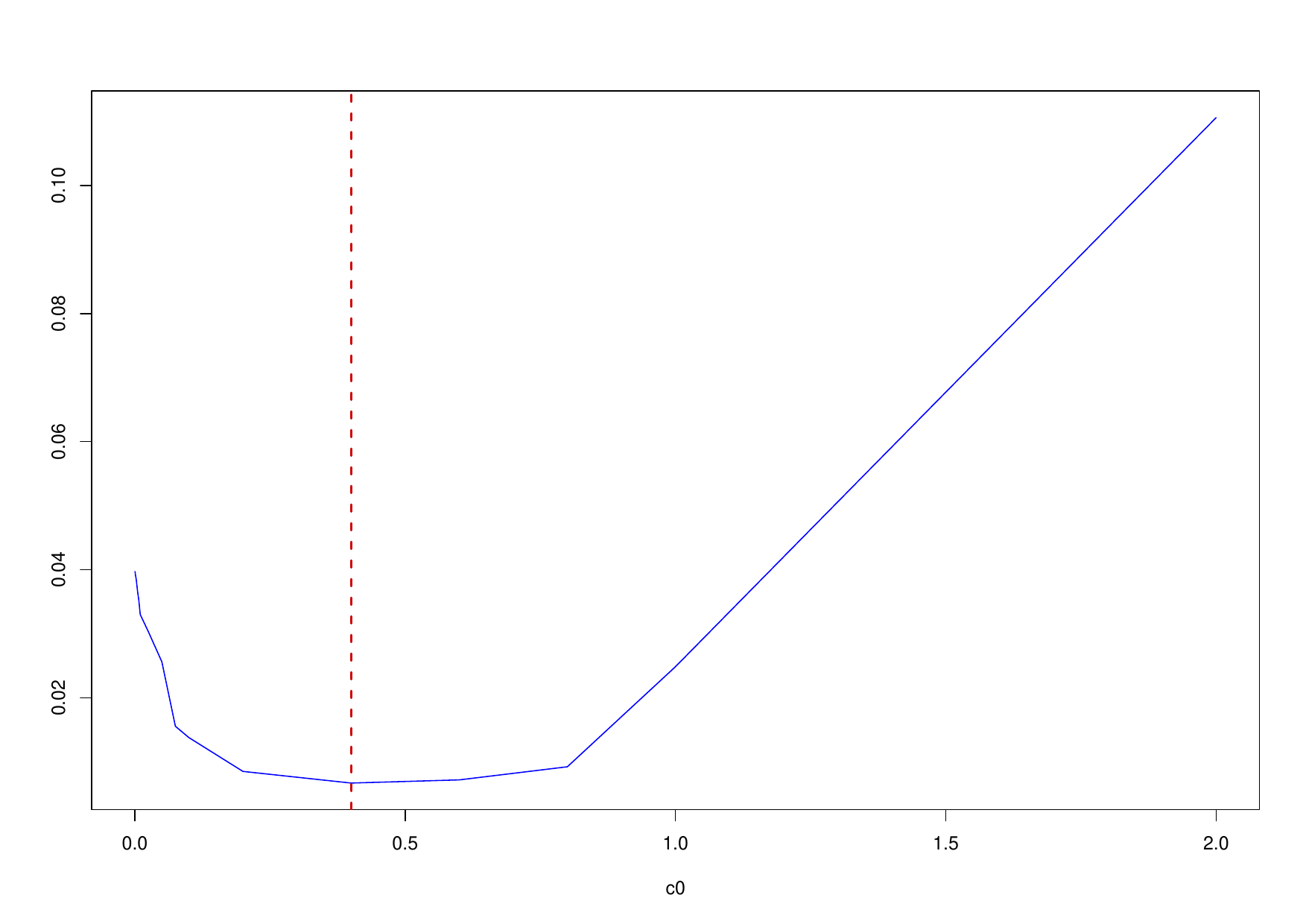}
	\caption{Plot of the function $\tilde c_{0} \in G_{\tilde c_{0}}\longmapsto\mathcal{R}(\tilde c_0)$.  The red vertical line displays the point $\tilde c_0 =0.4$.}
		\label{fig:simulation:regression.circular:linear.predictor.crit.find.c01=c02.XLaplace-sd-noise=0.075:x=0.2}
	\end{center}
\end{figure}   

We now illustrate the numerical performance of our procedure. Table~\ref{numerical.results:table:MAE.linear-predictor} presents the ME (Mean Error) of $\widehat{m}_{\widehat{h}}(x)$ over $50$ Monte Carlo runs with respect to the circular distance $d_{c}$ for model (LC) with errors-in-variables $\varepsilon \sim \textrm{Laplace}(0, 0.075)$ and  $\varepsilon \sim \textrm{Laplace}(0, 0.1)$, respectively.  In addition, when dealing with regression problems with errors in the design, it is common the consider the so-called reliability ratio (see~\cite{Fan-Truong:1993}) that is defined by 
\begin{align} \label{reliability}
	\Upsilon = \dfrac{\Var(X)}{\Var(X) +  \Var(\varepsilon) }.  
\end{align}  
This ratio $\Upsilon$ can be used to assess the amount of noise in the covariates. More specifically, when  $\Upsilon$ is close to $0$, the amount of noise in the covariates becomes  larger and this makes the deconvolution step more difficult. We calculate the reliability ratio $\Upsilon$ corresponding to our  simulations.   
 The results in Table~\ref{numerical.results:table:MAE.linear-predictor} shows that the performances of our adaptive estimator is satisfying even if the noise increases. 
\begin{table}[ht!]  
	\begin{center}
		\begin{tabular}{c  c  || c  c }
			$\sigma_{\varepsilon}$ & $\Upsilon$    & $n = 200$ & $n = 500$ 
			\\[0.2cm]
			\hline  
			& &  
			\\[-0.2cm]   
			$0.075$ & $0.88$ &  $0.0064$ & $0.0029$ 
			\\[0.2cm]
			$0.10$ &$ 0.80$ &  $0.0137$  &  $0.0048$
		\end{tabular}
	\end{center} 
	\caption{Mean errors of $\widehat{m}_{\hat h}(x)$ at point $x_{0} = 0.2$.}
	\label{numerical.results:table:MAE.linear-predictor}
\end{table} 

\subsubsection{Circular predictor cases }
For the case of circular predictors, we consider the circular-circular  regression model:
\begin{align*}
\textrm{ (CC)}. \quad	\Theta &=  \Big( \frac{1}{2} + \cos \big( x + 3 \sin (X) \big) \Big) + \zeta ,   \quad  \textrm{(mod $2\pi$)},	\\
	Z &= X + \varepsilon ,   \quad  \textrm{(mod $2\pi$)}.
\end{align*} 
We consider $X \sim$  $f_{vM(0, 0.01)}$ and a regression noise $\zeta \sim $  $f_{vM(0, 5)} $. For the noise in the covariates, 
$\varepsilon $ follows a centered wrapped Laplace with scale $\lambda_{\varepsilon} > 0$.
  The Fourier transform of the density of $\varepsilon $ is given by  
\begin{align}
	f_{\varepsilon}^{\star}(l   ) = \dfrac{ \lambda_{\varepsilon}^{2} e^{\mathrm{i} 0 \ell } }{ \big( l - \mathrm{i} \frac{\lambda_{\varepsilon}}{\kappa_{\varepsilon}} \big) 
		\big( l + \mathrm{i} \frac{\lambda_{\varepsilon}}{\kappa_{\varepsilon}} \big)
	} =  \dfrac{\lambda_{\varepsilon}^{2}}{l^{2} + \lambda_{\varepsilon}^{2}} , \quad \textrm{ for } l \in \Z,
\end{align}
giving an example of an ordinary smooth noise of degree $\nu=2$ in (\ref{OS-circle}).  Figure \ref{fig:simulation:circular.predictor:data.plot} illustrates our regression setting.

\begin{figure}[ht]
	\centering
	\includegraphics[scale=0.30]{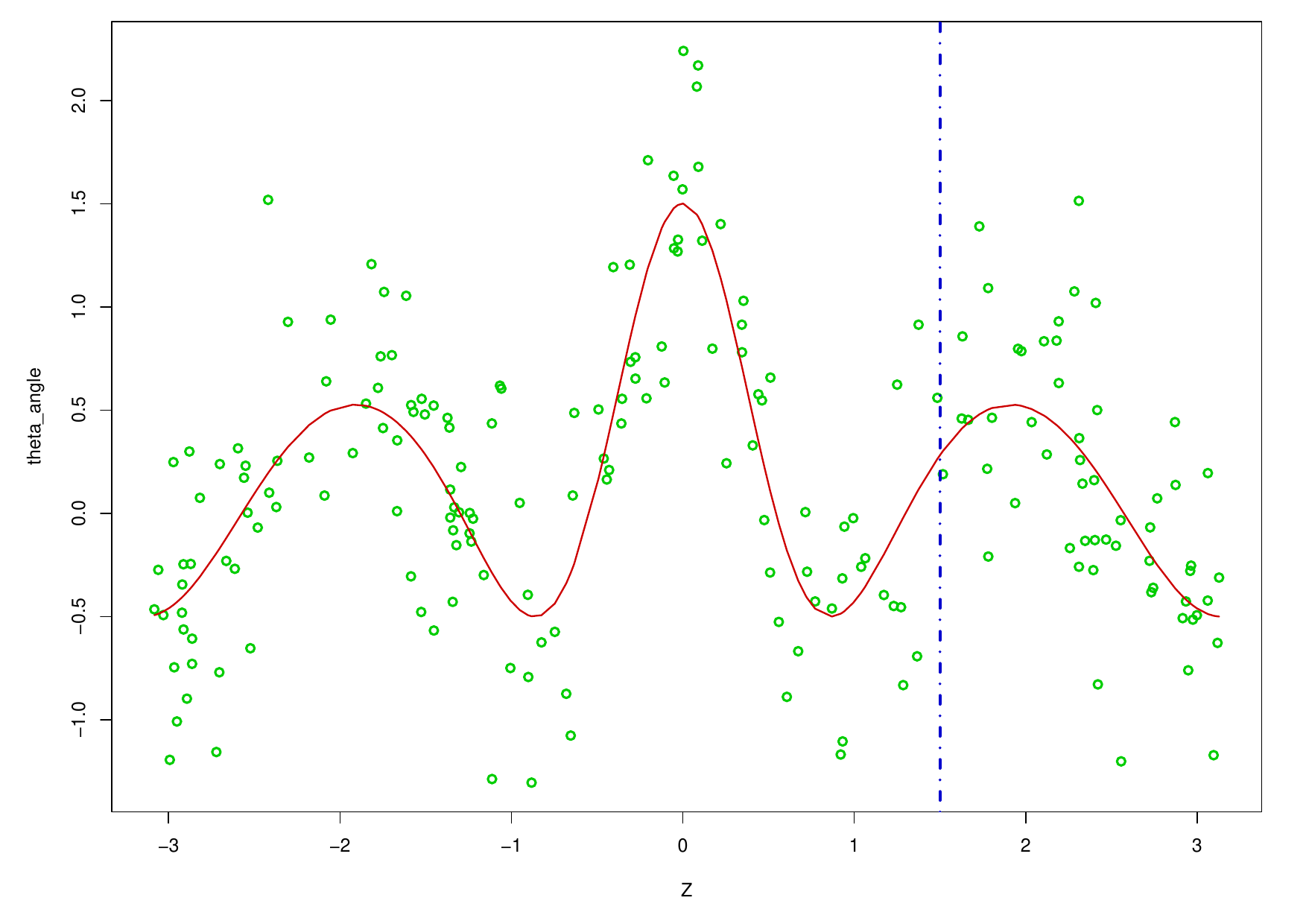}
	\vspace{-0.4cm}   
	\caption{Illustration of model (CC) with $n=200$. Simulated data $(\Theta_i )_{i=1}^{n}$ are displayed in green points. The red curve represents the regression function $m$, while the blue vertical line displays the point 
	$x = 1.5$ where we aim at estimating $m(x)$.}\label{fig:simulation:circular.predictor:data.plot}
\end{figure}

We implement the estimator $\widehat{m}_{\hat L}(x) = \Atan \big( \widehat{p}_{1,\hat {L}_{1}} (x) , \widehat{p}_{2, \hat {L}} (x) \big)$ (defined in (\ref{estimator-m-circular})), where the levels $\hat L_1$ and $\hat L_2$ are selected via the procedure (\ref{equa:adaptive.selection.L1-L2:form}). To this end, we need  to tune parameters $c_{0,1}$ and $c_{0,2}$. 
%
Similarly to the linear predictor case above, we implement preliminary simulations to calibrate them in  the case $c_{0,1} = c_{0,2} = c_{0}$ by measuring the oracle risk  $\mathcal{R}(c_0)$ given in   (\ref{def:risk}). We consider the following grid:
\[
G_{c_0} := \left\{ 0.001; \, 0.0025; \, 0.005; \, 0.0075; \, 0.01; \, 0.02; \, 0.04; \, 0.05; \, 0.06; \, 0.08; \, 0.09; \, 0.1; \, 0.2; \, 0.4; \, 0.6; \, 1; \, 2 ; \, 4 \right\}.
\]   

The function $\mathcal{R}(c_0)$ is plotted  in Figure~\ref{fig:simulation:regression.circular:circular.predictor.crit.find.c01=c02.XwrappedLaplace-sd-noise=2.75:x=1.5} when estimating at $x = 1.5$, for a covariate noise scale parameter $\lambda_{\varepsilon}=2.54$, a sample size $n=200$ and $50$ Monte Carlo runs.  A clear plateau phenomenon appears, showing that the choice of $c_{0,1} = c_{0,2} = 0.08$ is reasonable for further simulations.

\begin{figure}[ht!]
	\centering 
		\includegraphics[scale=0.30]{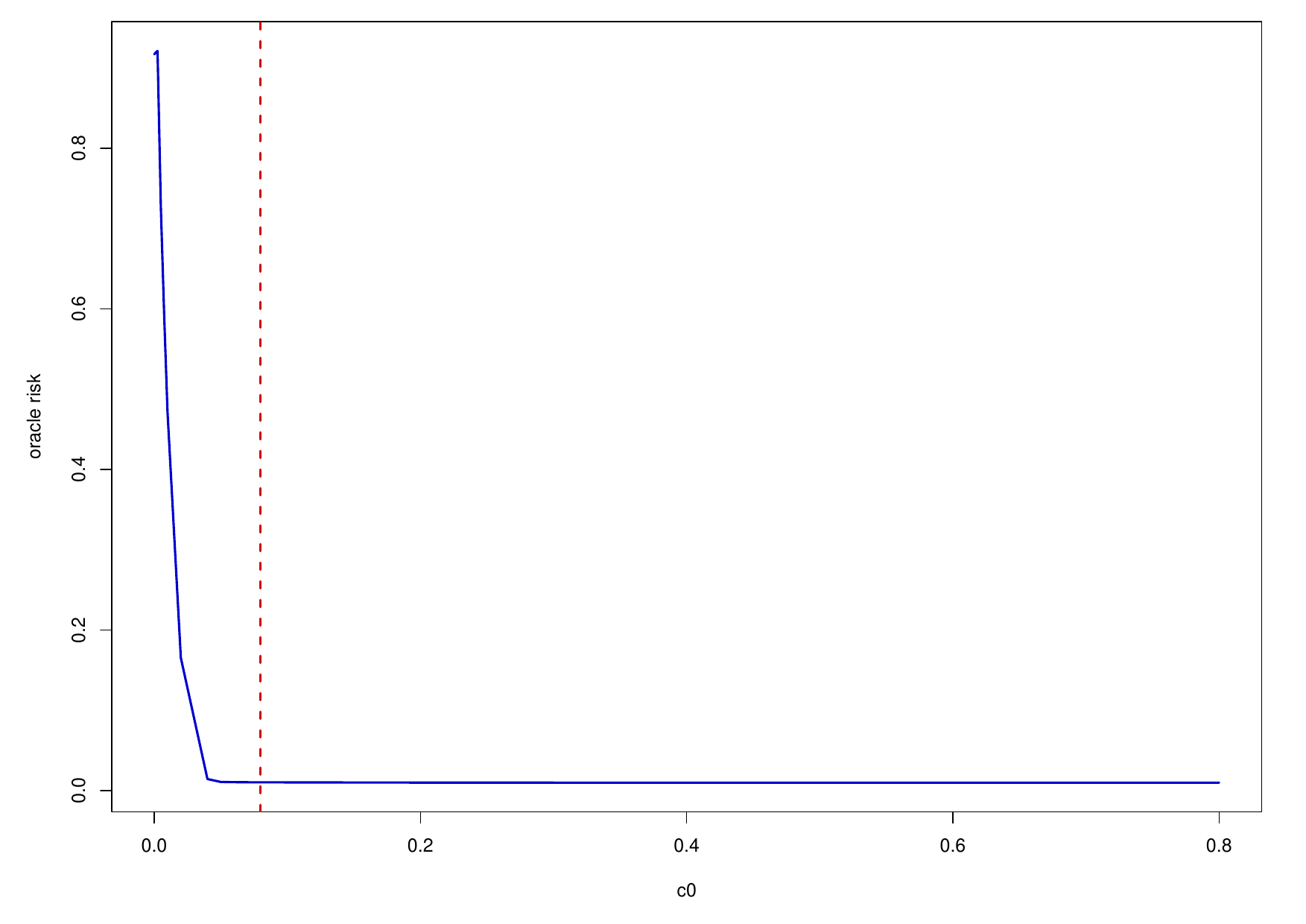}
	\caption{Plot of the function $c_{0} \in G_{c_{0}}\longmapsto\mathcal{R}(c_0)$. The red vertical line displays the point $c_0 =0.08$.}
	\label{fig:simulation:regression.circular:circular.predictor.crit.find.c01=c02.XwrappedLaplace-sd-noise=2.75:x=1.5}
\end{figure}


We also would like  to display the reliability ratio associated to each simulation. To do so, according to (\ref{reliability}) we need to find the values of $\Var(X)$ and $\Var(\varepsilon)$ which are given below for a von Mises distribution $f_{vM(0, 0.01)}$ and a centered wrapped Laplace distribution of scale parameter $\lambda_\varepsilon$, respectively: 
  $$\Var(X) = 1 - \dfrac{I_{1}(0.01)}{I_{0}(0.01)},$$ where $I_{\alpha}(\kappa)$ is the Bessel function of order $\alpha $ and 
\begin{equation}\label{var.wrappedLaplace}
	\Var(\varepsilon) = 1 - \dfrac{ \lambda_{\varepsilon}^{2} }{ \sqrt{ \big( \frac{1}{ \kappa_{\varepsilon}^{2} } + \lambda_{\varepsilon}^{2}  \big)  \big(  \kappa_{\varepsilon}^{2} + \lambda_{\varepsilon}^{2}  \big) } } = \dfrac{1}{1 + \lambda_{\varepsilon}^{2}} . 
\end{equation}
Finally, Table \ref{numerical.results:table:MAE.circular-predictor} displays our numerical performances showing again that they are again satisfying. 
\begin{table}[ht!]  
	\begin{center}
		\begin{tabular}{c  c || c  c }
			& 
			\\[0.1cm]   
			$\lambda_{\varepsilon}$ & $\Upsilon$ & $n = 200$ & $n = 500$ 
			\\[0.2cm]
			\hline  
			& &  
			\\[-0.2cm] $2.54$ & $0.88 $&  $0.0124$ & $0.0091$ 
			\\[0.2cm]
			$1.74$ & $0.80$ &  $0.0139$  &  $0.0107$
		\end{tabular}
	\end{center} 
	\caption{Mean errors  of $\widehat{m}_{\hat L}(x)$ at point $x_0=1.5$}
	\label{numerical.results:table:MAE.circular-predictor}
\end{table}

\subsection{Real data application}

We analyze the real data set described in Example~$6.3$ of Fisher's monograph \textit{Statistical Analysis of Circular Data} \cite{Fisher:1993}, which arises from experiments on the distances $(X)$ and directions $(\Theta)$ moved by $31$ small blue periwinkles (\textit{Nodilittorina unifasciata}) after transplantation downshore from the height at which they normally live. In fact, $15$ observations correspond to individuals measured one day after transplantation, and the remaining $16$ to those measured four days later. Note that no significant differences were observed between the behaviors of these two groups. Our aim is to predict direction from distance traveled, placing the problem in the framework of a regression model with a circular response and a linear predictor.   

Di Marzio et al. \cite{Marzio-Panzera-Taylor} analyzed this data set using a nonparametric kernel-weighted regression estimator, while numerous parametric estimators have also been proposed: Fisher and Lee \cite{Fisher-Lee:1992} considered the regression function, $\widehat{m}_{\textrm{FL}}(x) = 1.693 + 2 \tan^{-1} \big( -0.0066 ( x - 47.65) \big)$, while Presnell et al. \cite{Presnell} proposed $\widehat{m}_{\textrm{SPML}}(x) = \Atan \big( 0.157 + 0.049 x,  - 1.228 + 0.03 x \big) $.  More recently, Jammalamadaka and SenGupta \cite{book:Jammalamadaka-SenGupta} studied the alternative parametric model $
\widehat{m}_{\textrm{trig}}(x) = \Atan(1 + 0.021x, - 1.49 + 0.029x)$.  
To the best of our knowledge, this data set has not yet been analyzed within an errors-in-variables framework.


For this data set, we perform numerical experiments under two distinct scenarios in order to evaluate the performance of our data-driven statistical procedure.
In the first scenario, we consider that the covariate is contaminated by an ordinary-smooth measurement error with Laplace distribution, $\varepsilon \sim \textrm{Laplace}(0, \sigma_{\varepsilon})$, where the scale parameter is set to $\sigma_{\varepsilon} = 0.1$. The bandwidths $h_i$ are selected using our procedure with tuning constants $c_{0,1} = c_{0,2} = 0.4$.
In the second scenario, the covariate is observed with a supersmooth Gaussian measurement error, $\varepsilon \sim \mathcal{N}(0, \sigma_{\varepsilon}^2)$, with $\sigma_{\varepsilon} = 0.1$.

\begin{figure}[ht!]
	\centering 
	\includegraphics[scale=0.30]{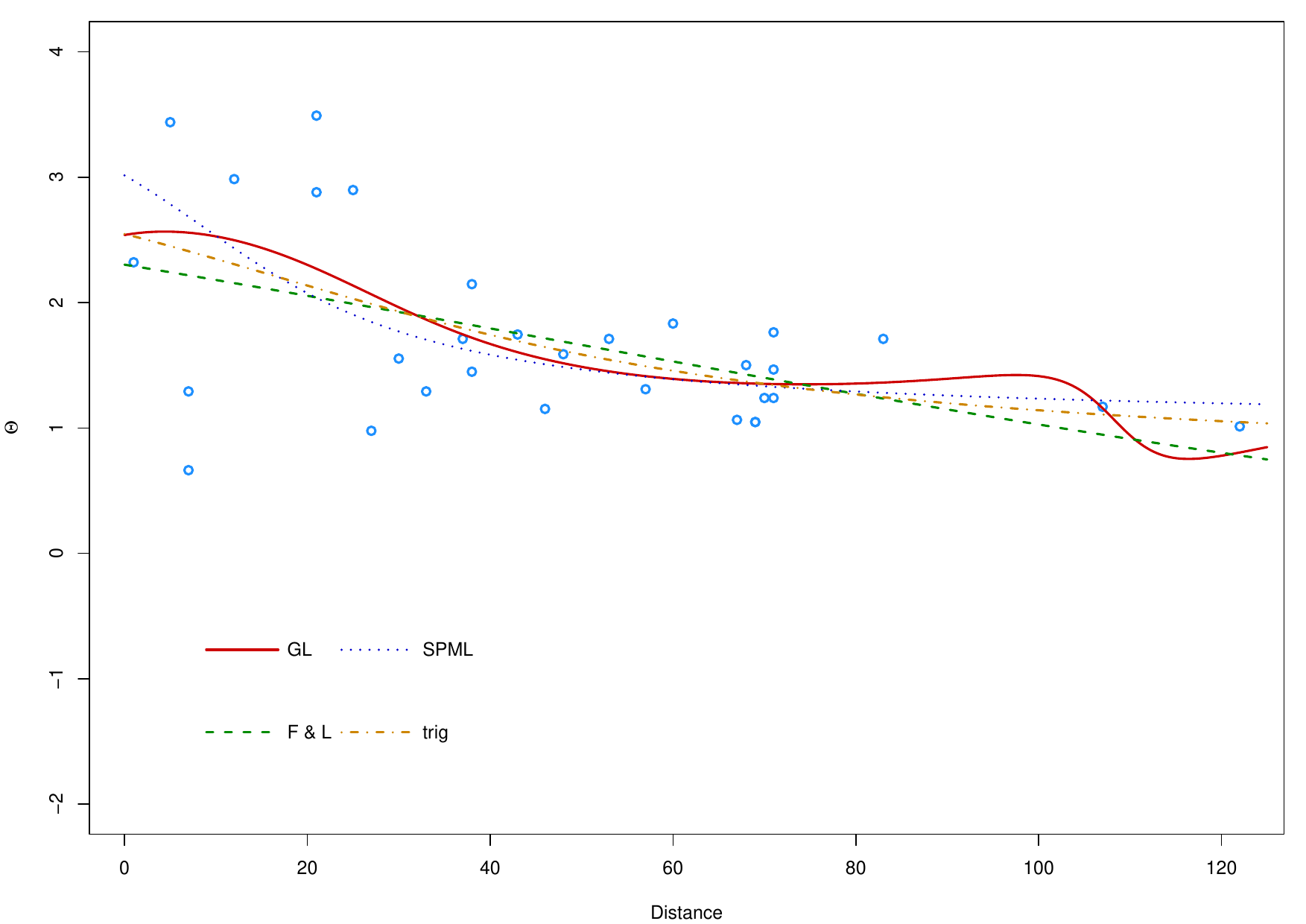}
	\caption{The red curve displays reconstruction of the regression function $m$ in Scenario $1$: $\varepsilon \sim$ Laplace$(0,\sigma_{\varepsilon})$ with $\sigma_{\varepsilon} = 0.1$.}
	\label{noise1}
\end{figure}

\begin{figure}[ht!]
	\centering 
	\includegraphics[scale=0.30]{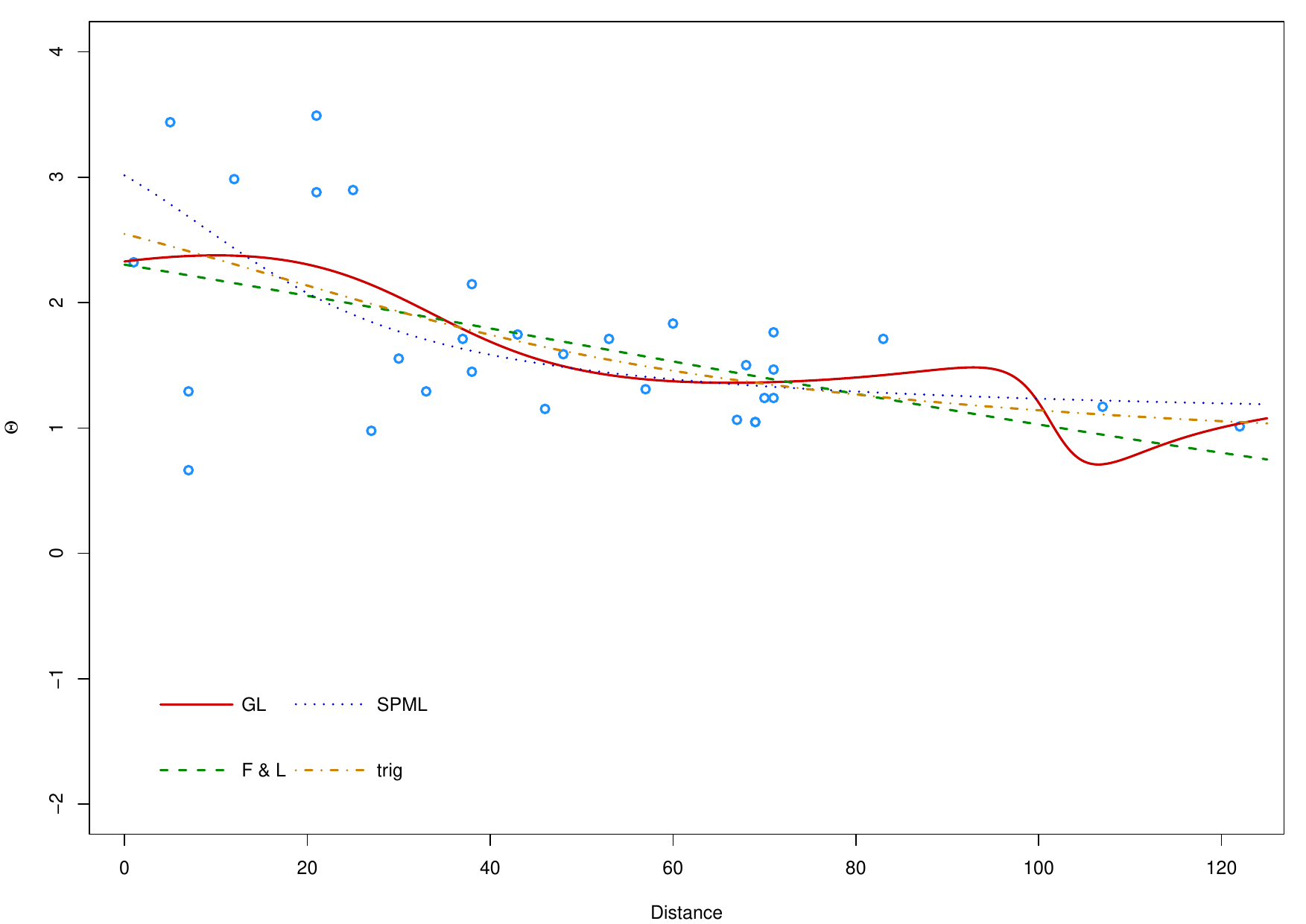}
	\caption{The red curve displays reconstruction of the regression function $m$ in Scenario $2$: $\varepsilon \sim$   $\mathcal{N}(0,\sigma_{\varepsilon}^{2})$ with $\sigma_{\varepsilon} = 0.1$.}
	\label{noise2}
\end{figure}

The estimated curves are displayed in Figures \ref{noise1} and \ref{noise2} (red). For comparison, we also report the three parametric models proposed by Fisher and Lee \cite{Fisher-Lee:1992}, Presnell et al. \cite{Presnell}, and Jammaladaka and SenGupta \cite{book:Jammalamadaka-SenGupta}; note that the Fisher-Lee model is linear. Our nonparametric estimator GL, which explicitly accounts for measurement error in the covariates, appears somewhat undersmoothed, with this effect becoming more pronounced under Gaussian noise. However, as noted by Presnell et al. \cite{Presnell}, model comparison is not straightforward in the presence of heteroscedasticity.

\section{Proofs} \label{proofs}

\subsection{Circular predictor case} \label{sec:adaptive:thm:pointwise-risk.upper-bound:m:proof}

\subsubsection{Proof of Proposition \ref{var-hat-p}} \label{preuve-proposition-CC}  
\begin{proof} 

First, let us upperbound the bias.
Assume that $p_i \in W(\beta_i, S_i)$ with $\beta_i > \frac 1 2$.  As $\E[\hat p_{i, L_i}(x)]=p_{i, L_i}(x)$ we have that  $| \E[\hat p_{i, L_i}(x) ]- p_i(x) | =|  p_{i, L_i}(x) - p_i(x)|$. And we get
\begin{eqnarray*}
2\pi | p_i(x) -p_{i, L_i}(x)| &\leq& \sum_{|l| \geq L_i} |p_i^\star(l)| =  \sum_{|l| \geq L_i} |p_i^\star(l)|(1+l^2)^{\beta_i/2}(1+l^2)^{-\beta_i/2} \\
& \leq & (\sum_{|l| \geq L_i}   |p_i^\star(l)|^2(1+l^2)^{\beta_i})^{1/2} (\sum_{|l| \geq L_i}   (1+l^2)^{-\beta_i})^{1/2} \\
& \leq &  C(S_i,\beta_i) L_i^{-\beta},
\end{eqnarray*}
where $C(S_i, \beta_i)$ is a constant depending on $S_i$ and $\beta_i$. 
\medskip

Let us prove the variance bounds. For $i \in  \{1, 2\}$ we have 
\begin{eqnarray*}
(2\pi)^2nVar(\widehat p_{i, L_i}(x)) &\leq&  \E \left  | \sum_{|l | \leq L_i}  e^{\compi l Z_1} e^{-\compi l x} \frac{1}{f^\star_{\varepsilon}(l)} \right |^2 
= \E \sum_{|l| \leq L_i}e^{\compi lZ_1} e^{-\compi lx} \frac{1}{f^\star_{\varepsilon}(l)} \sum_{|l'| \leq L_i}e^{-\compi l'Z_1} e^{\compi l'x} \frac{1}{\overline{f^\star_{\varepsilon}(l')}} \\
&= &\sum_{|l|, |l'|  \leq L_i } f_Z^\star(l-l')  e^{-\compi(l-l')x}  \frac{1}{f^\star_{\varepsilon}(l)} \frac{1}{\overline{f^\star_{\varepsilon}(l')}} \\
&\leq &  \sum_{|l|, |l'| \leq L_i} | f_Z^\star (l-l') | \frac{1}{ |f^\star_{\varepsilon}(l)|} \frac{1}{|{f^\star_{\varepsilon}(l')}|}  =
  \sum_{|\tilde l|, |l| \leq L_i} | f_Z^\star (\tilde l) |    \frac{1}{ |f^\star_{\varepsilon}(l)|} \frac{1}{|{f^\star_{\varepsilon}(l -\tilde l )|}} \\
& \leq &  \sum_{|\tilde l| \leq L_i} | f_Z^\star (\tilde l) | \left ( \sum_{|l | \leq L_i}   \frac{1}{ |f^\star_{\varepsilon}(l)|^2} \right )^{1/2} \left   ( \sum_{|l | \leq L_i}   \frac{1}{| f^\star_{\varepsilon}(l- \tilde l )|^2} \right )^{1/2} \\
& \leq &   \sum_{|\tilde l| \leq L_i} | f_Z^\star (\tilde l) |   \left ( \sum_{|l | \leq L_i}   \frac{1}{ |f^\star_{\varepsilon}(l)|^2} \right )^{1/2}   \left ( \sum_{|l | \leq 2L_i }   \frac{1}{ |f^\star_{\varepsilon}(l)|^2} \right )^{1/2} \\
& \leq & \sum_{|\tilde l| \leq L_i}  | f_{X}^\star (\tilde l) | |  f^\star_{\varepsilon}(\tilde l) |  \sum_{|l | \leq 2L_i }   \frac{1}{ |f^\star_{\varepsilon}(l)|^2} 
 \leq   \sum_{|l | \leq 2L_i }   \frac{1}{ |f^\star_{\varepsilon}(l)|^2}   \sum_{ | l| \leq L_i} | \int e^{\compi lx} f_X(x) dx| |  f^\star_{\varepsilon}( l) |  \\
& \leq &   \sum_{|l | \leq 2L_i }   \frac{1}{ |f^\star_{\varepsilon}(l)|^2}    \sum_{|l | \leq L_i }   { |f^\star_{\varepsilon}(l)|} 
 \leq  \|f^\star_{\varepsilon} \|_{\ell_1}   \sum_{|l | \leq 2L_i }   \frac{1}{ |f^\star_{\varepsilon}(l)|^2} .
\end{eqnarray*}

But we also have that 
\begin{eqnarray*}
(2\pi)^2n Var(\widehat p_{i, L_i}(x)) &\leq& \E \left | \sum_{ | l | \leq L_i} e^{il Z_1 e^{-il x}}  \frac{1}{ |f^\star_{\varepsilon}(l)| }\right |^2 
\leq   \left  ( \sum_{ | l | \leq L_i}  \frac{1}{ |f^\star_{\varepsilon}(l)| }\right )^2,
\end{eqnarray*}
hence 
$$
Var(\widehat p_{i, L_i}(x))  \leq  \frac{1}{(2\pi)^2n} \min \left ( \left  ( \sum_{ | l | \leq L_i}  \frac{1}{ |f^\star_{\varepsilon}(l)| }\right )^2,   \| f^\star_{\varepsilon} \|_{\ell_1}  \sum_{|l | \leq 2L_i }   \frac{1}{ |f^\star_{\varepsilon}(l)|^2}     \right).
$$
We now turn to the last point of the proposition which gives upper bounds for the variance when noise assumptions are made.
\medskip

 If the noise is ordinary smooth, we have provided   $\nu >1$ (to ensure that $\| f^{\star}_{\varepsilon} \|_{\ell_1}$ is finite) and $L_i\geq 1$ 
\begin{eqnarray*}
Var(\widehat p_{i, L_i}(x)) \leq \frac{1}{(2\pi)^2n} \max(1,  \|f^\star_\varepsilon\|_{\ell_1} ) \min(L_i^{2\nu+2}, L_i^{2\nu+1}) \leq  \frac{1}{(2\pi)^2n}  \max(1,  \|f^\star_\varepsilon\|_{\ell_1} )  L_i^{2\nu+1}.
\end{eqnarray*}
If the noise is supersmooth, then we get if $L_i\geq 1$
\begin{eqnarray*}
Var(\widehat p_{i, L_i}(x)) &\leq & \frac{1}{(2\pi)^2n}  \max(1,  \|f^\star_\varepsilon\|_{\ell_1} ) \min (L_i^{-2 c +1 } e^{2 b L_i^a}, L_i^{-2 c +2 } e^{2 b L_i^a}) \\
&=&   \frac{1}{(2\pi)^2n}  \max(1,  \|f^\star_\varepsilon\|_{\ell_1} ) L_i^{-2 c +1 } e^{2 b L_i^a},
\end{eqnarray*}
which completes the proof of the proposition.
\end{proof}

\subsubsection{Proof of Theorem  \ref{theorem-proba-oracle}} \label{preuve-theorem-proba-oracle}
We first start with a proposition which gives some useful concentration results for our estimators.
\begin{proposition}\label{inegalite-bernstein}
  Suppose that the noise $\varepsilon$ is OS. For all $(L_i, L_i') \in \mathcal{L}^2$, $i \in \{1, 2 \}$  and for all $ \alpha \geq 1$ we have that 
\begin{eqnarray}
\mathbb{P} [ | \widehat p_{i, L_i}(x) - p_{i, L_i}(x) | \geq c_1(\alpha) \sqrt{V(n,L_i)}] &\leq& 2 n^{-\alpha}  \label{bernstein-1} \\
\mathbb{P} [ | \widehat p_{i, L_i \wedge L_i'}(x) - p_{L_i \wedge L_i'}(x) | \geq c_1(\alpha) \sqrt{V(n,L_i')}] &\leq& 2 n^{-\alpha}  \label{bernstein-2},
\end{eqnarray}
as soon as $c_1^2(\alpha)\min({c_{0, 1}, c_{0,2}} )\geq \frac{16 \alpha^2}{\min (\| f^\star_\varepsilon \|_{\ell_1}, 1)}$.
\end{proposition}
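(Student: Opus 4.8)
The plan is to view $\widehat p_{i,L_i}(x)$ as an empirical mean of i.i.d.\ complex-valued summands and to apply a Bernstein-type deviation inequality, feeding it the variance bound of Proposition~\ref{var-hat-p} together with the deterministic sup-bound of a summand; the whole subtlety is that the latter has to be kept of the order of $n\sqrt{V_0(n,L_i)/\log n}$, which is precisely what membership in the collection $\mathcal{L}$ guarantees. Concretely, I would first write $\widehat p_{i,L_i}(x)-p_{i,L_i}(x)=\frac1n\sum_{j=1}^n W_j^{(i)}$, where $W_j^{(i)}:=Y_j^{(i)}-\E Y_j^{(i)}$ and $Y_j^{(1)}:=\frac1{2\pi}\sin(\Theta_j)\sum_{|l|\le L_i}e^{\mathrm{i}l(Z_j-x)}/f^\star_\varepsilon(l)$ (and similarly with $\cos$ for $i=2$); the $W_j^{(i)}$ are i.i.d.\ and centered by the unbiasedness of $\widehat p_{i,L_i}(x)$ for $p_{i,L_i}(x)$ established in Section~\ref{section-circular}. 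Using $|\sin|\vee|\cos|\le 1$ one gets $|W_j^{(i)}|\le 2b_i$ almost surely with $b_i:=\frac1{2\pi}\sum_{|l|\le L_i}|f^\star_\varepsilon(l)|^{-1}$, and Proposition~\ref{var-hat-p} gives $\E|W_1^{(i)}|^2=n\,\Var(\widehat p_{i,L_i}(x))\le nV_0(n,L_i)$.

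Next, and this is the key step, I would bound $b_i$ by $\sqrt{V_0(n,L_i)}$ using $L_i\in\mathcal L$. If the minimum defining $V_0(n,L_i)$ is attained at its first term, then $nV_0(n,L_i)=b_i^2$; otherwise membership in $\mathcal L$ says precisely that $\big(\sum_{|l|\le L_i}|f^\star_\varepsilon(l)|^{-1}\big)^2\le \frac{n}{\log n}\sum_{|l|\le 2L_i}|f^\star_\varepsilon(l)|^{-2}$, whence $b_i^2\le \frac{1}{\|f^\star_\varepsilon\|_{\ell_1}}\cdot\frac{n}{\log n}\cdot nV_0(n,L_i)$. In both cases, with $m:=\min\{\|f^\star_\varepsilon\|_{\ell_1},1\}$, one obtains $b_i\le \frac{n}{\sqrt{m\log n}}\sqrt{V_0(n,L_i)}$.

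Then I would apply Bernstein's inequality for i.i.d.\ (complex-valued) bounded variables — as in \cite{Comte-Lacour}, or by splitting into real and imaginary parts — with variance factor $nV_0(n,L_i)$ and sup-bound $2b_i$: for $t=c_1(\alpha)\sqrt{V(n,L_i)}=c_1(\alpha)\sqrt{c_{0,i}\log n\;V_0(n,L_i)}$,
$$\mathbb{P}\big(|\widehat p_{i,L_i}(x)-p_{i,L_i}(x)|\ge t\big)\le 2\exp\!\Big(-\tfrac{nt^2}{2\,(nV_0(n,L_i)+\tfrac23 b_i t)}\Big).$$
Plugging in the bound on $b_i$ gives $b_i t\le \frac{c_1(\alpha)\sqrt{c_{0,i}}}{\sqrt m}\,nV_0(n,L_i)$, so the exponent is at least $\frac{c_1^2(\alpha)c_{0,i}\log n}{2+\frac43 c_1(\alpha)\sqrt{c_{0,i}}/\sqrt m}$. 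Writing $A:=c_1^2(\alpha)c_{0,i}$, the hypothesis $A\ge 16\alpha^2/m$ forces $1/\sqrt m\le \sqrt A/(4\alpha)$, hence $\frac43\sqrt A/\sqrt m\le A/(3\alpha)$ and the exponent is $\ge \frac{A\log n}{2+A/(3\alpha)}\ge \alpha\log n$, the last inequality because $A\ge 3\alpha$. This gives \eqref{bernstein-1}. For \eqref{bernstein-2}, I would note that $\widehat p_{i,L_i\wedge L_i'}(x)$ is the same estimator at level $L_i\wedge L_i'$, which lies in $\mathcal L$ (being one of $L_i,L_i'$), and that both $L\mapsto V_0(n,L)$ and $L\mapsto\sum_{|l|\le L}|f^\star_\varepsilon(l)|^{-1}$ are nondecreasing; hence the variance factor and the sup-bound at level $L_i\wedge L_i'$ are dominated by their values at $L_i'$, and the same computation with $t=c_1(\alpha)\sqrt{V(n,L_i')}$ yields \eqref{bernstein-2}.

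The main obstacle I anticipate is the interplay between the second and third steps: one must extract from the somewhat opaque definition of $\mathcal L$ exactly the inequality that makes the linear-in-$t$ term in Bernstein's denominator of the same order as the quadratic ``Gaussian'' term $nV_0(n,L_i)$, and then verify that the stated threshold $16\alpha^2/m$ on $c_1^2(\alpha)c_{0,i}$ is precisely what the bookkeeping requires; a secondary technical point is the passage from the real to the complex version of Bernstein's inequality, which costs only harmless numerical constants.
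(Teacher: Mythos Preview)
Your proof is correct and follows essentially the same approach as the paper: write $\widehat p_{i,L_i}(x)$ as an empirical mean, bound each summand by $b_i=\tfrac{1}{2\pi}\sum_{|l|\le L_i}|f^\star_\varepsilon(l)|^{-1}$, use Proposition~\ref{var-hat-p} for the variance, apply Bernstein, and invoke membership in $\mathcal L$ to make the sub-exponential part comparable to the Gaussian part. The only difference is cosmetic: the paper uses the ``max'' form of Bernstein, $2\max\{\exp(-n\eta^2/(4V)),\exp(-n\eta/(4b))\}$, and checks the two exponents separately, whereas you use the classical form $2\exp(-nt^2/(2(\sigma^2+Mt/3)))$ and carry out the bookkeeping with $A=c_1^2(\alpha)c_{0,i}$; both routes land on the same constraint $c_1^2(\alpha)c_{0,i}\ge 16\alpha^2/\min(\|f^\star_\varepsilon\|_{\ell_1},1)$. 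Two incidental remarks: the summands are in fact real (the terms for $l$ and $-l$ are complex conjugates since $f^\star_\varepsilon(-l)=\overline{f^\star_\varepsilon(l)}$), so no complex version of Bernstein is needed; and the monotonicity argument for~\eqref{bernstein-2} is exactly how the paper handles it.
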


\begin{proof}
  To prove Proposition \ref{inegalite-bernstein} we rely on the Bernstein inequality which is recalled in the next lemma. 
 \begin{lem}
 \label{lemma:Berstein.inequality}
 Let $T_1, \cdots, T_n$ be $n$ i.i.d variables and $S_n = \sum_{k=1}^n (T_k - \E(T_k))$. Then for any $\eta>0$ we have
 \begin{equation}
 \mathbb{P}(|S_n| \geq n \eta) \leq 2 \max \left \{ \exp \left (-\frac{n\eta^2}{4V} \right ), \exp \left (-\frac{n\eta}{4b} \right ) \right \},
 \end{equation}
 with $Var(T_1) \leq V$ and $|T_1| \leq b$.
\end{lem}
Now let us prove Proposition \ref{inegalite-bernstein}. We will prove it for $i=1$, as the case $i=2$ is strictly identical. 
$$
\widehat p_{1, L_1}(x)= \sum_{| l| \leq L }\frac {1}{ 2 \pi n} \sum_{j=1}^n \sin(\Theta_j) \frac{e^{\compi l Z_j}}{f^\star_{\varepsilon}(l)} e^{-\compi l x}, x \in [-\pi, \pi ).
$$
Then setting
$$
T_{L_1,j}(x):= \frac{\sin(\Theta_j)}{2 \pi} \sum_{| l| \leq L_1 } \frac{e^{\compi l Z_j}}{f^\star_{\varepsilon}(l)} e^{-\compi l x},
$$
we have
$$
\widehat p_{1, L_1}(x) = \frac{1}{n} \sum_{j=1}^n T_{L_1,j}(x).
$$
We have that
\begin{eqnarray}
|T_{L_1,j}(x)| \leq \frac{1}{2\pi}  \sum_{| l| \leq L_1 } \frac{1}{|f^\star_{\varepsilon}(l)|} =: b(L_1)\\
Var (T_{L_1,j}(x)) = n Var(\widehat p_{1, L_1}(x)) \leq n V_0(n,L_1),
\end{eqnarray}
using Proposition \ref{var-hat-p} for the last inequality. Now applying Bernstein inequality to the variables $T_{L_1,j}(x)$, we get
\begin{eqnarray*}
\mathbb{P} [| \widehat p_{1, L_1}(x)- \E(\widehat p_{1, L_1}(x))| \geq \eta(L_1)] &= &\mathbb{P}  [| \sum_{j=1}^n T_{L_1, j} (x)- \E(T_{L_1, j} (x)) | \geq n \eta(L_1)] \\
&\leq & 2 \max \left \{ \exp \left (-\frac{n\eta^2(L_1)}{4nV_0(n,L_1)} \right ), \exp \left (-\frac{n\eta(L_1)}{4b(L_1)} \right ) \right \}.
\end{eqnarray*}
Let $\alpha \geq 1$ and choose $\eta(L_1)= c_1(\alpha) \sqrt{V(n,L_1)}$. Then we get 
\begin{eqnarray*}
&\mathbb{P} & [| \widehat p_{1, L_1}(x)- \E(\widehat p_{1, L_1}(x))| \geq c_1(\alpha) \sqrt{V(n,L_1) }]  \\  &\leq& \quad 2 \max \Bigg \{ \exp \Bigg (-\frac{n c_1^2(\alpha) V(n,L_1) }{4nV_0(n,L_1)} \Bigg ),  \exp \Bigg (-\frac{n c_1(\alpha) \sqrt{V(n,L_1)} }{4b(L_1)} \Bigg ) \Bigg \}.
\end{eqnarray*}
Now let us choose $c_1(\alpha)$ in the following way. On the one hand, if $\frac{c_1^2(\alpha)c_{0,1}}{4}\geq  \alpha$ then 
\begin{eqnarray*}
\frac{ c_1^2(\alpha) V(n,L_1) }{4V_0(n,L_1)} = \frac{c_1^2(\alpha)}{4} c_{0,1} \log n  \geq \alpha \log n.
\end{eqnarray*}
On the other hand 
\begin{eqnarray*}
\frac{nc_1(\alpha) \sqrt{V(n,L_1)}}{4 b(L_1)} = \frac{c_1(\alpha) \sqrt{c_{0,1}\log n }}{4}  \frac{n \sqrt{V_0(n,L_1)}}{b(L_1)}.
\end{eqnarray*}
But 
\begin{eqnarray*}
 \frac{n \sqrt{V_0(n,L_1)}}{b(L_1)} = \sqrt{n} \min \left \{1,  \frac{  \| f^\star_{\varepsilon} \|_{\ell_1}^{1/2}} { \sum_{ | l | \leq L_1}  \frac{1}{ |f^\star_{\varepsilon}(l)| } } \left( \sum_{|l | \leq 2L_1}   \frac{1}{ |f^\star_{\varepsilon}(l)|^2}    \right)^{1/2} \right \}.
\end{eqnarray*}
Now since $L_1 \in \mathcal{L}$, this entails that 
\begin{eqnarray}
\sqrt{n} \min \left \{1,  \frac{  \| f^\star_{\varepsilon} \|_{\ell_1}^{1/2}} { \sum_{ | l | \leq L_1}  \frac{1}{ |f^\star_{\varepsilon}(l)| } } \left( \sum_{|l | \leq 2L_1 }   \frac{1}{ |f^\star_{\varepsilon}(l)|^2}    \right)^{1/2} \right\} \geq \sqrt{c_2} \sqrt{\log n },
\end{eqnarray}
with $\sqrt{c_2}= \min\{1, \| f^\star_{\varepsilon} \|_{\ell_1}^{1/2}\}$, and we get that if $ \frac{\sqrt{c_1^2(\alpha) c_{0,1} c_2}}{4} \geq \alpha$
$$
\frac{nc_1(\alpha) \sqrt{V(n,L_1)}}{4 b(L_1)}\geq \alpha \log n. 
$$
Note that the condition $ \frac{\sqrt{c_1^2(\alpha) c_{0,1} c_2}}{4}  \geq \alpha$ implies the condition $\frac{c_1^2(\alpha)c_{0,1}}{4}\geq  \alpha$, which completes the proof of the first inequality. Now for the second assertion of the proposition, we have 
\begin{eqnarray}
\widehat p_{1, L_1 \wedge L_1'}(x)= \frac 1 n \sum_{j=1}^n T_{L _1\wedge L_1', j} (x). 
\end{eqnarray}
Hence, we have 
\begin{eqnarray*}
Var(\widehat p_{L_1 \wedge L_1'}(x)) \leq n V_0(n, L_1 \wedge L_1') \leq n V_0(n, L_1') \\
 |T_{L_1 \wedge L'_1, j} (x) | \leq  \frac{1}{2\pi}  \sum_{| l| \leq L_1\wedge L_1' } \frac{1}{|f^\star_{\varepsilon}(l)|}  \leq   \sum_{| l| \leq L_1' } \frac{1}{|f^\star_{\varepsilon}(l)|} =  b(L_1'),
\end{eqnarray*}
which entails the result following exactly the same line of reasoning as for the first assertion. This concludes the proof of Proposition \ref{inegalite-bernstein}.
\end{proof}

Let us now prove Theorem \ref{theorem-proba-oracle}.

\begin{proof}
Let $L \in \mathcal L_i$ be fixed and $i \in \{1, 2\}$.
\begin{equation}
| \widehat p_{i, \hat L_i}(x)- p_i(x) | \leq \underbrace{|   \widehat p_{i, \hat L_i}(x) -    \widehat p_{i, L_i \wedge \hat L_i}(x) | }_{A_1}+ \underbrace{ |  \widehat p_{ i, L_i \wedge \hat L_i}(x) - \widehat p_{i, L_i} (x) |}_{A_2}+ |\widehat p_{i, L_i}(x) -p_i(x)|.
\end{equation}
By definition of $A(L_i,x)$ (see \ref{def-A-li}), we have 
\begin{equation}
A_1 \leq A(L_i,x) + \sqrt{V(n, \hat L_i)}.
\end{equation}
By definition of $\hat L_i$ and $A(\hat L_i,x)$, we have
$$
A_2 \leq A(\hat L_i , x) + \sqrt{V(n, L_i)}.
$$
Thus
\begin{eqnarray*}
A_1+A_2 &\leq & A(L_i,x) + \sqrt{V(n, \hat L_i)} + A(\hat L_i , x) + \sqrt{V(n, L_i)} \\
& \leq & 2A(L_i, x) + 2 \sqrt{V(n, L_i)}, 
\end{eqnarray*}
given the definition of $\hat L_i$. Thus
$$
| \widehat p_{i, \hat L_i}(x)- p_i(x) | \leq 2A(L_i, x) + 2 \sqrt{V(n, L_i)} + |\widehat p_{i, L_i}(x) -p_i(x)|.
$$
Let us study more properly the term $A(L_i,x)$. We recall that 

$$A(L_i, x)=\sup_{L_i'\in \mathcal L} \left  \{| \widehat p_{i, L_i'}(x) - \widehat p_{i, L_i \wedge L_i'}(x)| -\sqrt{V(n, L_i') } \right \}_{+}$$

Now for any $L_i' \in \mathcal{L}$, we write
\begin{eqnarray*}
\widehat p_{i, L_i'}(x) -\widehat p_{i, L_i \wedge L_i'}(x) = (\widehat p_{i, L_i'}(x) - p_{i, L_i'}(x)) - (\widehat p_{i, L_i \wedge L_i'}(x) - p_{i, L_i \wedge L_i'}(x)) + p_{i, L_i'}(x) -p_{i, L_i \wedge L_i'}(x).
\end{eqnarray*}
We have for the last term of the above decomposition

\begin{eqnarray*}
| p_{i, L_i'}(x) -p_{i, L_i \wedge L_i'}(x) | &=& \frac{1}{2\pi } \left | \sum_{|l| \leq L_i '} p_i^\star(l)e^{-\compi l x} - \sum_{|l | \leq L_i \wedge L_i'} p_i^{\star}(l) e^{-\compi lx} \right | \\
&\leq& \frac {1}{2\pi }  \sum_{|l| \geq   L _i} |p_i^\star(l)|.
\end{eqnarray*}

Hence we get for $A(L_i,x)$ that
\begin{eqnarray*}
A(L_i,x) &\leq& \sup_{L'\in \mathcal{L}} [| \widehat p_{i, L_i'}(x) - \widehat p_{i, L_i \wedge L_i'}(x) | - \sqrt{V(n,L_i')}]_{+} \\
&\leq &  \sup_{L_i'\in \mathcal{L}} [| \widehat p_{i, L_i'}(x) - p_{i, L_i'}(x) | - \sqrt{V(n,L_i')}/2]_{+} \\
&\quad +& \sup_{L_i'\in \mathcal{L}} [| \widehat p_{i, L_i \wedge L_i'}(x) - p_{i, L_i'}(x) | - \sqrt{V(n,L_i')}/2]_{+}  +  \frac {1}{2\pi }  \sum_{|l| \geq   L _i} |p_i^\star(l)|.
\end{eqnarray*}
Using Proposition \ref{inegalite-bernstein} with $c_1(\alpha)=\frac 1 2$, if $c_{0,i} \geq \frac{64 \alpha^2}{\min(1, \| f_{\varepsilon}^{\star}\|_{\ell_1})}$ we have
$$
\mathbb{P}( \sup_{L_i'\in \mathcal{L}} [| \widehat p_{i, L_i'}(x) -  p_{ L_i'}(x) | - \sqrt{V(n,L_i')}/2]_{+} >0) \leq 2 \sum_{L_i' \in \mathcal{L}} n^{-\alpha} \leq 2 n^{-\alpha+1},
$$
as well as 
$$
\mathbb{P}( \sup_{L_i'\in \mathcal{L}} [| \widehat p_{L_i \wedge L_i'}(x) - p_{i, L_i\wedge L_i'}(x) | - \sqrt{V(n,L_i')}/2]_{+} >0)  \leq 2 n^{-\alpha+1} .
$$
This entails that if one considers the set
\begin{eqnarray*}
\mathcal{E}_i&=&\left \{ \sup_{L_i'\in \mathcal{L}} [| \widehat p_{i, L_i'}(x) -  p_{i, L_i'}(x) | - \sqrt{V(n,L_i')}/2]_{+} =0 \right \}   \\
 &\bigcap&
\left \{ \forall L_i \in \mathcal{L},   \sup_{L_i'\in \mathcal{L}} [| \widehat p_{i, L_i \wedge L_i'}(x) - p_{i, L_i\wedge L_i'}(x) | - \sqrt{V(n,L_i')}/2]_{+} =0 \right \},
\end{eqnarray*}
one gets that $\mathbb{P}(\mathcal{E}_i) \geq 1- 4n^{2 -\alpha}$. 
Now let us choose $\alpha= 2 +q$. 
 On the set $\mathcal{E}_i$ we get 
 \begin{eqnarray*}
 | \widehat p_{i, \hat L_i}(x)-p_i(x)| &\leq& 2 A(L_i,x) + 2 \sqrt{V(n,L_i)} + |\widehat p_{i, L_i}(x)- p_i(x)| \\
 & \leq &   \frac {1}{2\pi }  \sum_{|l| \geq  L_i } |p_i^\star(l)|+ 2 \sqrt{V(n,L_i)} + |\widehat p_{i, L_i}(x) -p_{i, L_i}(x)| + | p_{i, L_i}(x)- p_i(x)| \\
 & \leq &   \frac {1}{2\pi }  \sum_{|l| \geq   L_i } |p_i^\star(l)| + 2 \sqrt{V(n,L_i)} + \sqrt{V(n,L_i)}/2  +\frac{1}{2\pi} \sum_{|l| \geq L_i} |p_i^\star(l)|\\
 & \leq & \frac{1}{ \pi}  \sum_{|l| \geq L_i} |p_i^\star(l)| + \frac{5}{2} \sqrt{V(n,L_i)}. 
 \end{eqnarray*}
 This concludes the proof of Theorem \ref{theorem-proba-oracle}.
\end{proof}

\subsubsection{Proof of Theorem   \ref{adaptive:thm:circular} } \label{preuve-adaptive:thm:circular}
We first need a lemma to bound the probability of a useful set.
\begin{lem}\label{proba-hat-p}
Let the noise be OS with $\nu>1$. Let $ i \in \{1, 2\}$ and  $p_i$ belongs to $\mathcal{W}(\beta_i, S_i)$. Under the assumptions of Theorem \ref{theorem-proba-oracle}, 
 if one denotes 
$$
 E_i :=\left  \{ |\widehat p_{i, \hat L_i} (x) -p_i(x) |^2 \leq C_i  \left (\frac{\log n}{n} \right)^{\frac{2\beta_i}{2\beta_i + 2\nu +1}} \right \}, 
$$
with $C_i$ depending on $S_i, \beta_i, c_{0, i}$ and $f^\star_\varepsilon$, then for $n$ large enough, we have
$$
\mathbb{P}( E_i) \geq 1- 4n^{-q}.
$$

\end{lem}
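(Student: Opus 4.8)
The plan is to combine the oracle inequality from Theorem~\ref{theorem-proba-oracle} with the bias and variance bounds from Proposition~\ref{var-hat-p}, evaluated at a suitably chosen (deterministic) level $L_i$. Recall that Theorem~\ref{theorem-proba-oracle} asserts that on an event of probability at least $1-4n^{-q}$ we have, for each $i\in\{1,2\}$,
$$
|\widehat p_{i,\hat L_i}(x)-p_i(x)|\leq \inf_{L_i\in\mathcal L}\Big\{\frac{1}{\pi}\sum_{|l|\geq L_i}|p_i^\star(l)|+\frac52\sqrt{V(n,L_i)}\Big\}.
$$
So it suffices to exhibit a single admissible level $L_i^\dagger\in\mathcal L$ for which the right-hand side is of order $(\log n/n)^{\beta_i/(2\beta_i+2\nu+1)}$. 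I would take $L_i^\dagger\asymp (n/\log n)^{1/(2\beta_i+2\nu+1)}$, which is (up to the logarithmic factor) the optimal oracle level $L_{i,\mathrm{opt},OS}$ identified in Section~\ref{circular-rates}.

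The key steps, in order, are as follows. First, verify that $L_i^\dagger$ belongs to the collection $\mathcal L$ defined in~\eqref{set-L}: since the constraint there is equivalent to $L\precsim n/\log n$ and $L_i^\dagger\asymp (n/\log n)^{1/(2\beta_i+2\nu+1)}$ with $2\beta_i+2\nu+1>1$, this holds for $n$ large enough; one also needs $L_i^\dagger\le n$, which is immediate. Second, bound the bias term: by the Cauchy--Schwarz computation in the proof of Proposition~\ref{var-hat-p} (using $p_i\in\mathcal W(\beta_i,S_i)$ with $\beta_i>1/2$), $\frac1\pi\sum_{|l|\geq L_i^\dagger}|p_i^\star(l)|\leq C(S_i,\beta_i)(L_i^\dagger)^{-\beta_i}$, which is of order $(\log n/n)^{\beta_i/(2\beta_i+2\nu+1)}$. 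Third, bound the variance term: since the noise is OS with $\nu>1$ and $L_i^\dagger\ge 1$, Proposition~\ref{var-hat-p} gives $V_0(n,L_i^\dagger)\le \frac{\max\{1,\|f_\varepsilon^\star\|_{\ell_1}\}}{(2\pi)^2 n}(L_i^\dagger)^{2\nu+1}$, hence $V(n,L_i^\dagger)=c_{0,i}\log(n)V_0(n,L_i^\dagger)\precsim \frac{\log n}{n}(L_i^\dagger)^{2\nu+1}$, and with the chosen $L_i^\dagger$ this is again of order $(\log n/n)^{2\beta_i/(2\beta_i+2\nu+1)}$, so $\sqrt{V(n,L_i^\dagger)}$ has order $(\log n/n)^{\beta_i/(2\beta_i+2\nu+1)}$. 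Fourth, combine: on the event of Theorem~\ref{theorem-proba-oracle} (probability $\ge 1-4n^{-q}$), squaring yields $|\widehat p_{i,\hat L_i}(x)-p_i(x)|^2\le C_i(\log n/n)^{2\beta_i/(2\beta_i+2\nu+1)}$ with $C_i$ depending only on $S_i,\beta_i,c_{0,i}$ and $f_\varepsilon^\star$, which is exactly the claim.

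I do not expect a serious obstacle here: the statement is essentially a bias-variance tradeoff read off from the oracle inequality, and the only things to be careful about are (a) checking membership of $L_i^\dagger$ in $\mathcal L$ for $n$ large, which is where the ``for $n$ large enough'' hypothesis is used, and (b) keeping track of the constants so that the final $C_i$ depends only on the advertised quantities (in particular, absorbing $\max\{1,\|f_\varepsilon^\star\|_{\ell_1}\}$, $c_{0,i}$, and the Sobolev constants into $C_i$). If anything is slightly delicate it is matching the exponents: one should note that $(L_i^\dagger)^{-2\beta_i}$ and $\frac{\log n}{n}(L_i^\dagger)^{2\nu+1}$ are balanced precisely by the choice $L_i^\dagger\asymp(n/\log n)^{1/(2\beta_i+2\nu+1)}$, and both then equal $(\log n/n)^{2\beta_i/(2\beta_i+2\nu+1)}$ up to constants, so the infimum in the oracle inequality is controlled by twice this quantity (plus the constant factor $5/2$ on the variance term, squared).
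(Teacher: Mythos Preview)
Your proposal is correct and follows essentially the same approach as the paper: both apply the oracle inequality of Theorem~\ref{theorem-proba-oracle} at the deterministic level $L_i^\dagger\asymp(n/\log n)^{1/(2\beta_i+2\nu+1)}$, check that this level belongs to $\mathcal L$ (since the constraint in~\eqref{set-L} amounts to $L\precsim n/\log n$), and then use the bias and variance bounds of Proposition~\ref{var-hat-p} to conclude. Your write-up is in fact slightly more explicit about the constant tracking and the balancing of exponents than the paper's own proof.
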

\begin{proof}

Let $i \in \{1, 2 \}$. Using Proposition \ref{var-hat-p} which yields that  $V(n,L_i) \precsim \frac{\log(n)}{n} L_i^{2\nu +1}$ and that $\sum_{|l| \geq L_i} |p^\star(l)|  \precsim L^{-\beta }$, one  defines $L_{i, {OS}}$, the value $L_i$ such that 
\begin{eqnarray}\label{L_opt}
L_{i, {OS}} := \arg \min_{L_i \in \N^*} \left \{ L_i^{-2\beta} + \frac{\log(n)}{n} L_i^{2\nu +1}  \right \}.
\end{eqnarray}
Let us prove that  $ L_{i, { OS}} \in \mathcal{L}$. Minimizing (\ref{L_opt}), one finds that $L_{i, { OS}}$  is proportional to $(\frac{n}{\log n})^{\frac{1}{2\beta_i + 2 \nu +1 }}$. Now let $L \in \mathbb{N}^*$. We have that
\begin{eqnarray*}
 \frac{  \left (\sum_{ | l | \leq L}  \frac{1}{ |f^\star_{\varepsilon}(l)| } \right )^2}{\sum_{|l | \leq 2L }   \frac{1}{ |f^\star_{\varepsilon}(l)|^2}} \precsim \frac{  \left (\sum_{ | l | \leq L} l^{\nu}\right )^2 }{ \sum_{|l | \leq 2L } l^{2\nu}}
  \precsim  \frac{(L^{\nu+1})^2}{L^{2\nu+1}} \precsim L. 
\end{eqnarray*}
Consequently, if $L \precsim \frac{n}{\log n}$ then $L \in \mathcal{L}$. But, $ L_{i, { OS}}  \varpropto (\frac{n}{\log n})^{\frac{1}{2\beta_i + 2 \nu +1 }} \leq  \frac{n}{\log n} $, hence $ L_{i, { OS}} \in \mathcal{L}$.
Consequently, using Theorem \ref{theorem-proba-oracle}, we deduce that with probability greater than $1-4n^{-q}$
\begin{equation*}
| \widehat p_{i, \hat L_i}(x) -p_i(x)|^2 = \mathcal{O} \left (\frac{\log n}{n} \right)^{\frac{2\beta_i}{2\beta_i + 2\nu +1}},
\end{equation*}
which completes the proof of Lemma \ref{proba-hat-p}.

\end{proof}

We are now ready to prove  Theorem   \ref{adaptive:thm:circular}. Using similar ideas to control $\E \left [  d_c ( \widehat{m}_{\hat L}(x), m(x))  \right ]$ proposed  in~\cite{Nguyen-PhamNgoc-Rivoirard} we will first study the following term 
$$R_{n}  :=  \E \left [  d_c ( \widehat{m}_{\hat L}(x), m(x))  \mathbf{1}_{E_1 \cap  E_2 } \right ].$$
First, we have
\begin{align*}
	R_{n}  &=  \E \Big[ \left ( 1- \cos \big(\Atan( \widehat{p}_{1, \hat L_1}(x) , \widehat{p}_{2, \hat L_2}(x))  \right. 
	\\
	& \hspace{3.5cm}  \left. -  \Atan( p_{1}(x) ,  p_{2}(x) \big)\big) \right )   \hspace{0.02cm}   \mathbf{1}_{E_1 \cap E_2}  \Big]\\
	&= 2  \hspace{0.02cm}  \E  \Big[ \sin^2  \Big( \frac{1}{2}  \left(  \Atan( \widehat{p}_{1, \hat L_1}(x) ,  \widehat{p}_{2, \hat L_2}(x))  
	\right.    
	\\
	& \hspace{3.5cm}  \left.  -  \Atan ( p_{1}(x), p_{2}(x))  \right ) \Big)  \hspace{0.1cm}   \mathbf{1}_{E_1 \cap {E}_{2 } }\Big].
\end{align*}
Note that under Assumption~\eqref{assumption:assumption.on.f_X} on the density $f_{X}$ of $X$, one has $m_{j}(x) = 0$ if and only if $p_{j}(x) = 0$. Now, we shall distinguish 3 cases.
\\[0.2cm]
\textbf{Case 1: $|m_1(x)|>0$ and  $|m_2(x)|>0$.}
\\[0.2cm] 
Let 
$\delta_1 = |m_1(x)| \quad \mbox{ and } \quad \delta_2 = |m_2(x)|,$ 
meaning that $\delta=\min( \delta_1,\delta_2)$ (see~\eqref{deltawx}). 
First, on the event $E_1 \cap E_2$, for $n$ large enough satisfying 
\begin{align*}
 \psi_{n}(\beta_{1}, \nu)  <  \delta_{1}/2
\qquad  
\mbox{and}
\qquad 
  \psi_{n}(\beta_{2}, \nu)  <  \delta_{2}/2  \hspace{0.2cm} ,  \end{align*}
we have 
$$\big| \widehat{p}_{1, \hat L_1}(x) - p_{1}(x) \big| 
\leq   C_{1}  \psi_{n}(\beta_{1}, \nu) 
<  \dfrac{\big| p_{1}(x)\big|}{2}  ,
$$ 
and 
$$
\big|   \widehat{p}_{2, \hat L_2}(x) - p_{2}(x) \big|   \leq   C_{2}     \psi_{n}(\beta_{2}, \nu)   < \dfrac{\big| p_{2}(x) \big|}{2} \hspace{0.2cm}  . 
$$ 
This implies that 
$$ \widehat{p}_{1, \hat L_1}(x)  \hspace{0.1cm}  p_{1}(x) > 0
\quad  \mbox{ and }  \quad   
 \widehat{p}_{2, \hat L_2}(x)  \hspace{0.1cm} p_{2}(x) > 0,  \hspace{0.1cm}
$$    
and finally that $ \widehat{p}_{1, \hat L_1}(x)$ and $ \widehat{p}_{2, \hat L_2}(x)$ do not vanish on $E_1 \cap E_2$.
Therefore,
\begin{align}	
	R_n= & \hspace{0.1cm} 2 \hspace{0.02cm}  \E \Big [ \sin^2 \Big ( \frac{ 1}{ 2}  \Big (   \arctan \Big( \dfrac{ \widehat{p}_{1, \hat L_1}(x)}{ \widehat{p}_{2, \hat L_2}(x)}  \Big)  -  \arctan \Big( \dfrac{p_{1}(x)}{p_{2}(x)} \Big) \Big ) \Big)   \nonumber  
	\\
	& \hspace{7cm} \times    \mathbf{1}_{ E_1 \cap E_2 } \Big]    
	\nonumber 
	\\[0.2cm] 
	\leq & \hspace{0.1cm} \frac{ 1}{2}  \hspace{0.1cm} \E \left [  \left |     \arctan \Big( \dfrac{ \widehat{p}_{1, \hat L_1}(x)}{ \widehat{p}_{2, \hat L_2}(x)}  \Big)  -  \arctan \Big( \dfrac{p_{1}(x)}{p_{2}(x)} \Big)  \right |^2  \hspace{0.1cm}    \mathbf{1}_{E_1 \cap E_2} \right]    
	\nonumber  
	\\
	\leq& \hspace{0.1cm} \E  \Big[ \Big| \arctan \Big( \dfrac{ \widehat{p}_{1, \hat L_1}(x)}{ \widehat{p}_{2, \hat L_2}(x)}  \Big)  -  \arctan \Big( \dfrac{p_{1}(x)}{ \widehat{p}_{2, \hat L_2}(x)} \Big) \Big|^{2}    \hspace{0.1cm}    \mathbf{1}_{E_1 \cap   {E}_{2 }} \Big]     \label{adaptive:thm:pointwise-risk.upper-bound:p:proof:control.on.E2n.OS}
	\\
	&+   \E \Big[ \Big| \arctan \Big( \dfrac{p_{1}(x)}{ \widehat{p}_{2, \hat L_2}(x)}  \Big)  -  \arctan \Big( \dfrac{p_{1}(x)}{p_{2}(x)} \Big) \Big|^{2}     \hspace{0.1cm}      \mathbf{1}_{E_1 \cap  E_2 }\Big]  \nonumber.
\end{align}
On the event $E_2$, for $n$ sufficiently large,
\begin{align*}  
\big| \widehat{p}_{2, \hat L_2}(x) \big|  &\geq   \big| p_{2}(x) \big| - \big| p_{2}(x) - \widehat{p}_{2, \hat L_2}(x) \big|  
> \delta_{2} -  C_{2}     \hspace{0.1cm}  \psi_{n}(\beta_{2}, \nu)      
\geq   \delta_{2}/2 \hspace{0.1cm}  ,
\end{align*} 
and using the $1$-Lipschitz continuity of  $\arctan$, we get for the first term in~\eqref{adaptive:thm:pointwise-risk.upper-bound:p:proof:control.on.E2n.OS}, since on $E_1$ one has $\big| \widehat{p}_{1, \hat L_1}(x) - p_{1}(x) \big|   \leq   C_{1}    \hspace{0.1cm}  \psi_{n}(\beta_{1}, \nu) $,  
\begin{align*}
	&\E \Big[ \Big| \arctan \Big( \dfrac{ \widehat{p}_{1, \hat L_1}(x)}{ \widehat{p}_{2, \hat L_2}(x)}  \Big)  -  \arctan \Big( \dfrac{p_{1}(x)}{ \widehat{p}_{2, \hat L_2}(x)} \Big) \Big|^{2} \hspace{0.1cm}   \mathbf{1}_{ E_1 \cap E_2 }  \Big]
	\\
	&\leq  \dfrac{4}{\delta_{2}^2} \hspace{0.1cm}  \E \Big[ \Big| \widehat{p}_{1, \hat L_1}(x) - p_{1}(x) \Big|^{2}    \mathbf{1}_{E_1 \cap E_2 } \Big]
	\\
	&\leq  \dfrac{4}{\delta_{2}^2}  \hspace{0.1cm}  \E \Big[   C_{1}    \psi_{n}^2(\beta_{1}, \nu)  \hspace{0.1cm}  \mathbf{1}_{ E_1 \cap  E_2}  \Big]   
	\\
	&\leq  \dfrac{4}{\delta_{2}^2}  \hspace{0.1cm}  C_{1}  \hspace{0.1cm}    \psi_{n}^2(\beta_{1}, \nu )  \hspace{0.1cm} \mathbb{P} \big( E_1\cap  E_2 \big) 
	\leq  \dfrac{4}{\delta_{2}^2}  \hspace{0.1cm}  C_{1}  \hspace{0.1cm}     \psi_{n}^2(\beta_{1}, \nu)  .
\end{align*}
Moreover, for the second term in~\eqref{adaptive:thm:pointwise-risk.upper-bound:p:proof:control.on.E2n.OS}, since $\dfrac{p_{1}(x)}{ \widehat{p}_{2, \hat L_2}(x)} \times  \dfrac{p_{1}(x)}{p_{2}(x)} >  0$ on $E_2$, we have 
\begin{align*}
	&\E \Big[ \Big| \arctan \Big( \dfrac{p_{1}(x)}{ \widehat{p}_{2, \hat L_2}(x)}  \Big)  -  \arctan \Big( \dfrac{p_{1}(x)}{p_{2}(x)} \Big) \Big|^{2}  \hspace{0.1cm}   \mathbf{1}_{E_1 \cap  {E}_{2 }} \Big]
	\\
	&=  \E \Big[ \Big| \arctan \Big( \dfrac{ \widehat{p}_{2, \hat L_2}(x)}{p_{1}(x)}  \Big)  -  \arctan \Big( \dfrac{p_{2}(x)}{p_{1}(x)} \Big) \Big| ^{2}  \hspace{0.1cm}    \mathbf{1}_{E_1 \cap   E_2 } \Big] 	
	\\    
	&\leq  \dfrac{1}{\big| p_{1}(x) \big|^2} \hspace{0.1cm} \E \Big[ \Big|  \widehat{p}_{2, \hat L_2}(x) - p_{2}(x)  \Big| ^{2} \hspace{0.1cm}  \mathbf{1}_{ E_1 \cap  E_2} \Big]
	\\
	&\leq    \dfrac{1}{\delta_{1}^2}  \hspace{0.1cm}  \E \Big[  C_{2}     \psi_{n}^2(\beta_{2}, \nu )    \hspace{0.1cm}  \mathbf{1}_{ E_1 \cap   E_2 }  \Big]
	\\& \leq  \dfrac{1}{\delta_{1}^2}  \hspace{0.1cm}  C_{2}    \psi_{n}^2(\beta_{2}, \nu) \hspace{0.1cm}  .
\end{align*}
\noindent Therefore, on the event $E_1 \cap   E_2$, 
for $n$ sufficiently large such that 
\begin{align*}
C_{1} \psi_{n}(\beta_{1}, \nu )    \leq  \delta_{1}/2 , \qquad
\textrm{ and } \qquad   
C_{2}   \psi_{n}(\beta_{2}, \nu )    \leq  \delta_{2}/2, 
\end{align*}
we obtain
\begin{align*}
	&\E \Big[ \big| \widehat{m}_{\hat L}(x) - m(x) \big|^{2} \hspace{0.1cm}   \mathbf{1}_{E_1 \cap   E_2}  \Big]  
	\leq   \dfrac{4}{\delta^2}   C_{1}     \psi_{n}^2(\beta_{1}, \nu)
	 +      \dfrac{1}{\delta^2}     C_{2}    \psi_{n}^2(\beta_{2}, \nu).
\end{align*}
On the other hand, on the complementary $ E_1 ^{c}  \cup   E_2^{c}$, using the fact that $\big| \Atan (w_{1},w_{2}) \big| \leq \pi$ for all $ (w_{1},w_{2})$, we can simply obtain an upper-bound as follows:
\begin{align*}
	&\E \Big[ d_c( \widehat{m}_{\hat L}(x) - m(x) ) \hspace{0.1cm}  \mathbf{1}_{E_1^{c}  \cup  E_2^{c} }  \Big]
	\\
	&\leq \frac{1}{2}    \hspace{0.1cm}  \E \Big[ \Big| \Atan \big( \widehat{p}_{1, \hat L_1}(x) , \widehat{p}_{2, \hat L_2}(x)\big)  -  \Atan \big( p_{1}(x), p_{2}(x) \big)  \Big|^{2}  
	\\
	& \hspace{6.5cm} \times  \mathbf{1}_{ E_1^{c}  \cup  E_2^{c} }  \Big]
	\\
	&\leq  2 \pi^{2} \hspace{0.1cm}  \mathbb{P} \left( E_1^{c} \right)   +  2 \pi^{2} \hspace{0.1cm}  \mathbb{P} \left( E_2 ^{c} \right) \leq   4 \pi^{2} \hspace{0.1cm} 4 \hspace{0.1cm} n^{-q}, 
\end{align*}
by  Lemma~\ref{proba-hat-p}. For $q \geq 1$, we get that $n^{-q}$ is negligible in comparison with  $   \psi_{n}^2(\beta_{1}, \nu) $ and $ \psi_{n}^2(\beta_{2}, \nu)$.
\medskip

\noindent
{\textbf{Case 2: $m_1(x) = 0$ and $|m_2(x)|>0$.}
	\\[0.2cm]   
	In this case $\delta=|m_2(x)|$.\\
	- If $m_2(x)>0$, then $p_{2}(x) > 0$ and as previously, on the event $E_1 \cap  E_2 $, for $n$ large enough, $ \widehat{p}_{2, \hat L_2}(x)>0$. Then,
	\begin{align*}
		R_n & = 2 \hspace{0.1cm}  \E \Big[ \sin^2 \Big( \frac{1}{2}  \hspace{0.1cm}    \left (  \Atan \big( \widehat{p}_{1, \hat L_1}(x) , \widehat{p}_{2, \hat L_2}(x)\big)  -  \Atan \big( p_{1}(x), p_{2}(x)\big)  \right ) \Big) 
		\\
		& \hspace{8cm} \times  \mathbf{1}_{ E_1 \cap  E_2) } \Big]
		\\[0.2cm]  
		&= 2 \hspace{0.1cm}  \E \left [ \sin^2 \left ( \frac{1}{2} \hspace{0.1cm}   \left (  \Atan \big( \widehat{p}_{1, \hat L_1}(x) , \widehat{p}_{2, \hat L_2}(x)\big)  -  0  \right ) \right) \hspace{0.1cm}   \mathbf{1}_{E_1 \cap  E_2 } \right]
		\\[0.2cm]   
		&= 2  \hspace{0.1cm}  \E \Big[ \sin^2 \Bigg( \frac{1}{2}  \hspace{0.1cm}    \Big(  \arctan \Big( \dfrac{ \widehat{p}_{1, \hat L_1}(x)}{ \widehat{p}_{2, \hat L_2}(x)}  \Big)  -  \arctan \Big( \dfrac{ p_{1}(x)}{ \widehat{p}_{2, \hat L_2}(x)}  \Big) \Big) \Bigg) \\
		& \hspace{8cm} \times    \mathbf{1}_{E_1 \cap  E_2 } \Big]  
		\\  
		&\leq  \frac{1}{2}  \hspace{0.1cm}  \E  \Big[ \Big| \arctan \Big( \dfrac{ \widehat{p}_{1, \hat L_1}(x)}{ \widehat{p}_{2, \hat L_2}(x)}  \Big)  -  \arctan \Big( \dfrac{p_{1}(x)}{ \widehat{p}_{2, \hat L_2}(x)} \Big) \Big|^{2} \hspace{0.1cm}    \mathbf{1}_{ E_1 \cap  E_2 } \Big],     
	\end{align*}
	and we conclude as for the first case.
	
	\noindent  
	- If $m_2(x) < 0$, then $p_{2}(x) < 0$ and as previously, on the event $E_1  \cap   E_2$, for $n$ large enough, $ \widehat{p}_{2, \hat L_2}(x) < 0$. Then,
	\begin{align*}
		R_n&= 2 \hspace{0.1cm}  \E \Big[ \sin^2 \Big( \frac{ 1}{ 2}  \left (  \Atan \big( \widehat{p}_{1, \hat L_1}(x) , \widehat{p}_{2, \hat L_2}(x)\big)  -  \Atan \big( p_{1}(x), p_{2}(x)\big)  \right ) \Big)  
		\\
		& \hspace{8cm} \times    \mathbf{1}_{E_1 \cap E_2} \Big]
		\\[0.2cm]  
		&= 2 \, \E \left [ \sin^2 \left ( \frac{1}{2} \hspace{0.1cm}   \left (  \Atan \big( \widehat{p}_{1, \hat L_1}(x) , \widehat{p}_{2, \hat L_2}(x)\big)  +\pi  \right ) \right)  \hspace{0.1cm}    \mathbf{1}_{E_1 \cap E_2 } \right]
		\\
		&= 2 \, \E \Big[ \sin^2 \Big( \frac{1}{2}  \hspace{0.1cm}    \Big(  \arctan \Big( \dfrac{ \widehat{p}_{1, \hat L_1}(x)}{ \widehat{p}_{2, \hat L_2}(x)}  \Big) + 2 \pi \hspace{0.1cm}   \mathbf{1}_{\{ \widehat{p}_{1, \hat L_1}(x)> 0\}}  
		\\
		& \hspace{4.25cm}  -  \arctan \Big( \dfrac{ p_{1}(x)}{ \widehat{p}_{2, \hat L_2}(x)}  \Big) \Big) \Big)   \hspace{0.1cm}    \mathbf{1}_{E_1 \cap E_2 }\Big]
		\\[0.2cm]  
		&= 2 \, \E \Big[ \sin^2 \Big( \frac{1}{2} \hspace{0.1cm}   \Big(  \arctan \Big( \dfrac{ \widehat{p}_{1, \hat L_1}(x)}{ \widehat{p}_{2, \hat L_2}(x)}  \Big)  -  \arctan \Big( \dfrac{ p_{1}(x)}{ \widehat{p}_{2, \hat L_2}(x)}  \Big) \Big) \Big) 
		\\
		& \hspace{8cm}  \times    \mathbf{1}_{E_1 \cap  E_2} \Big]    
		\\[0.2cm]
		&\leq \frac{1}{2}  \hspace{0.1cm}  \E  \Big[ \Big| \arctan \Big( \dfrac{ \widehat{p}_{1, \hat L_1}(x)}{ \widehat{p}_{2, \hat L_2}(x)}  \Big)  -  \arctan \Big( \dfrac{p_{1}(x)}{ \widehat{p}_{2, \hat L_2}(x)} \Big) \Big|^{2} \hspace{0.1cm}   \mathbf{1}_{E_1\cap E_2 } \Big],  
	\end{align*}
	and we conclude as for the first case.
}

\bigskip

\noindent
\textbf{Case 3: $|m_1(x)|>0$ and $m_2(x)=0$.}
\\[0.2cm]   
In this case $\delta=|m_1(x)|$.\\
- If $m_{1}(x)>0$, then $p_{1}(x) > 0$ and as previously, on the event $E_1  \cap E_2 $, for $n$ large enough, $ \widehat{p}_{1, \hat L_1}(x) > 0$. Then,
\begin{align*}
	R_n & = 2 \hspace{0.1cm}   \E \Big[ \sin^2 \Big( \frac{1}{2} \hspace{0.1cm}  \left (  \Atan \big( \widehat{p}_{1, \hat L_1}(x) ,   \widehat{p}_{2, \hat L_2}(x)\big)  -  \Atan \big( p_{1}(x), p_{2}(x)\big)  \right ) \Big)
	\\
	& \hspace{8cm} \times     \mathbf{1}_{E_1 \cap   E_2 } \Big]
	\\[0.2cm]  
	&= 2 \hspace{0.1cm}  \E \Big[ \sin^2 \Big( \frac{1}{2} \hspace{0.1cm}   \Big(  \arctan \Big( \dfrac{ \widehat{p}_{1, \hat L_1}(x)}{ \widehat{p}_{2, \hat L_2}(x)}  \Big) + \pi  \hspace{0.1cm}  \mathbf{1}_{ \{ \widehat{p}_{2, \hat L_2}(x) < 0 \} }  -  \frac{\pi}{2}  \Big) \Big) 
	\\
	& \hspace{8cm} \times   \mathbf{1}_{E_1 \cap   E_2 } \Big]
	\\
	&= 2 \hspace{0.1cm}  \E \Big[ \sin^2 \Big( \frac{1}{2} \hspace{0.1cm}   \Big(  \arctan \Big( \dfrac{ \widehat{p}_{2, \hat L_2}(x)}{ \widehat{p}_{1, \hat L_1}(x)}  \Big)  -  \arctan \Big( \dfrac{ p_{2}(x)}{ \widehat{p}_{1, \hat L_1}(x)}  \Big) \Big) \Big) 
	\\
	& \hspace{8cm} \times   \mathbf{1}_{E_1 \cap  E_2 } \Big],
\end{align*}
where the last equality is obtained by using for $u \in \R \char92 \{0\}$, 
$$\arctan(u)+\arctan(1/u)=\pi/2\times \textrm{sign}(u)$$ and by distinguishing the cases according to the sign of $ \widehat{p}_{2, \hat L_2}(x)$.
\\
- If $m_1(x) < 0$, then $p_{1}(x) < 0$ and as previously, on the event $E_1 \cap   E_2$, for $n$ large enough, $ \widehat{p}_{1, \hat L_1}(x) < 0$. Then, similarly,
\begin{align*}
	R_n  &= 2 \hspace{0.1cm}   \E \Big[ \sin^2 \Big( \frac{1}{2}  \hspace{0.1cm}   \left (  \Atan \big( \widehat{p}_{1, \hat L_1}(x) , \widehat{p}_{2, \hat L_2}(x)\big)  -  \Atan \big( p_{1}(x), p_{2}(x)\big)  \right ) \Big) 
	\\
	& \hspace{8cm}   \times   \mathbf{1}_{E_1 \cap  E_2 } \Big]
	\\[0.2cm]  
	&= 2  \hspace{0.1cm}  \E \Big[ \sin^2 \Big( \frac{1}{2}  \hspace{0.1cm}    \Big(  \arctan \Bigg( \dfrac{ \widehat{p}_{1, \hat L_1}(x)}{ \widehat{p}_{2, \hat L_2}(x)}  \Bigg) - \pi  \hspace{0.1cm}  \mathbf{1}_{\{ \widehat{p}_{2, \hat L_2}(x) < 0\}}  + \frac{\pi}{2}  \Big) \Big)  
	\\
	& \hspace{8cm}  \times    \mathbf{1}_{E_1 \cap  E_2 } \Big]
	\\[0.2cm]  
	&= 2  \hspace{0.01cm}  \E \Big[ \sin^2 \Big( \frac{1}{2}    \hspace{0.01cm}   \Big(  \arctan \Big( \dfrac{ \widehat{p}_{2, \hat L_2}(x)}{ \widehat{p}_{1, \hat L_1}(x)}  \Big) -\arctan \Big( \dfrac{ p_{2}(x)}{ \widehat{p}_{1, \hat L_1}(x)}  \Big) \Big) \Big) 
	\\
	& \hspace{8cm}  \times   \mathbf{1}_{E_1 \cap   E_2 } \Big].
\end{align*}
We conclude by using similar arguments presented for the second case since $ \widehat{p}_{1, \hat L_1}(x)$ (respectively $p_{1}(x)$) and  $ \widehat{p}_{2, \hat L_2}(x)$ (respectively $p_{2}(x)$) play a symmetric role.
Note that under Assumption~\eqref{deltawx}
, the case $m_1(x) = m_2(x) = 0$ cannot occur. 
This concludes the proof of the ordinary smooth noise case.

As for the supersmooth noise case, the proof is identical to that of the ordinary smooth case, and is in fact a much simpler version. Indeed, in the supersmooth setting, there is no need to select a bandwidth, and hence no need to consider the events $ E_1$ and $ E_2$. One simply follows the proof of the ordinary smooth case, omitting the indicator $\mathbf{1}_{E_1 \cap   E_2 } $ wherever it appears. This completes the proof of  Theorem \ref{adaptive:thm:circular}.

\subsection{Linear predictor case}
We now present proofs for the linear predictor case. Along this subsection, we fix $x \in [0,1]$.   
\subsubsection{Proof of Proposition \ref{lem:point-wise:mean-var.p1-p2}} \label{preuve-lem:point-wise:mean-var.p1-p2}

We first state the following helpful lemma.  
\begin{lem}(see  \cite[Lemma 1]{Comte-Lacour})
\label{lem:proof.of.lem:point-wise:mean-var.p1-p2:preliminary.lemma:bound.integral}
Let $c, s \geq 0$ and $\varrho \in \R$ such that $2 \varrho > -1$ if $c = 0$ or $s = 0$. Then, for all $\omega > 0$, there exist constants $\lambda_{1}, \lambda_{2} > 0$ such that   
\begin{align}
\lambda_{1} \hspace{0.1cm} \omega^{2\varrho + 1 - s} \exp \big( c  \hspace{0.1cm} \omega^{s} \big)  \leq  \int_{-\omega}^{\omega}  (1 + |u|^{2})^{\varrho} \exp \big( c |u|^{s} \big) du   \leq   \lambda_{2}  \hspace{0.1cm}    \omega^{2\varrho + 1 - s} \exp \big( c \hspace{0.1cm} \omega^{s} \big) .
\end{align}
\end{lem}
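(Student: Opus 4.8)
The plan is to prove the two-sided bound by a Laplace-type (Watson's lemma) estimate localized near the right endpoint. Since the integrand is even it suffices to estimate $J(\omega):=\int_0^{\omega}(1+u^2)^{\varrho}\exp(cu^s)\,du$, and the claimed inequality then follows with the same constants up to a harmless factor $2$. Throughout one may take $\omega\ge 1$: this is the only regime needed in the paper, where $\omega$ is either an integer cutoff $L_i\ge1$ or $h^{-1}$ with $h\le1$, and the constants $\lambda_1,\lambda_2$ will be uniform over that range (the passage from ``$\omega$ large enough'' to ``$\omega\ge1$'' being absorbed into the constants).

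First I would dispose of the degenerate cases $c=0$ or $s=0$, where $\exp(c|u|^s)$ is a constant ($1$, resp.\ $e^c$) and the statement reduces to $J(\omega)\approx \omega^{2\varrho+1}$ under the hypothesis $2\varrho>-1$. For the upper bound split $[0,\omega]=[0,1]\cup[1,\omega]$ and use $(1+u^2)^{\varrho}\le 2^{|\varrho|}u^{2\varrho}$ on $[1,\omega]$, which integrates (since $2\varrho+1>0$) to a constant multiple of $\omega^{2\varrho+1}$; for the lower bound restrict to $[\omega/2,\omega]$, on which $(1+u^2)^{\varrho}\ge c_{\varrho}\,\omega^{2\varrho}$ for a constant $c_\varrho$ depending only on $\varrho$ (here $\omega\ge1$ is used, valid for both signs of $\varrho$), giving $J(\omega)\ge\tfrac12 c_\varrho\,\omega^{2\varrho+1}$.

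The main case is $c>0$, $s>0$. The elementary observation is that on $[\omega/2,\omega]$ one has $cs\,c'\,\omega^{s-1}\le \bigl(\exp(cu^s)\bigr)'/\exp(cu^s)=cs\,u^{s-1}\le cs\,C'\,\omega^{s-1}$, with $c'=\min(1,2^{1-s})$ and $C'=\max(1,2^{1-s})$ (the two endpoint values of $u^{s-1}/\omega^{s-1}$), so that integrating $\bigl(\exp(cu^s)\bigr)'$ over $[\omega/2,\omega]$ yields
$$\frac{\omega^{1-s}}{cs\,C'}\bigl(e^{c\omega^s}-e^{c(\omega/2)^s}\bigr)\ \le\ \int_{\omega/2}^{\omega} e^{cu^s}\,du\ \le\ \frac{\omega^{1-s}}{cs\,c'}\,e^{c\omega^s}.$$
Since $2^{-s}<1$, for $\omega$ large enough $e^{c\omega^s}-e^{c(\omega/2)^s}\ge\tfrac12 e^{c\omega^s}$, hence $\int_{\omega/2}^{\omega}e^{cu^s}\,du\approx \omega^{1-s}e^{c\omega^s}$. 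Combining with $(1+u^2)^{\varrho}\approx\omega^{2\varrho}$ on $[\omega/2,\omega]$ gives $\int_{\omega/2}^{\omega}(1+u^2)^{\varrho}e^{cu^s}\,du\approx \omega^{2\varrho+1-s}e^{c\omega^s}$, which already yields the desired lower bound on $J(\omega)$. For the matching upper bound I would add the contribution of $[0,\omega/2]$: there $(1+u^2)^{\varrho}$ is integrable against $du$ up to a polynomial factor in $\omega$ and $\exp(cu^s)\le\exp(c\,2^{-s}\omega^s)$, so that piece is $O\bigl(\omega^{\max(1,2\varrho+1)}\exp(c\,2^{-s}\omega^s)\bigr)=o\bigl(\omega^{2\varrho+1-s}e^{c\omega^s}\bigr)$ because $2^{-s}<1$; absorbing it into the constant concludes.

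The only genuinely delicate point is the bookkeeping: one must keep the comparisons of $(1+u^2)^{\varrho}$ with $\omega^{2\varrho}$ and of $u^{s-1}$ with $\omega^{s-1}$ correct across all four sign/size regimes ($\varrho\gtrless0$, $s\gtrless1$), and verify that ``$\omega$ large enough'' can be replaced by ``$\omega\ge1$'' at the cost of enlarging $\lambda_1,\lambda_2$. None of this is deep; it is the standard endpoint Laplace estimate.
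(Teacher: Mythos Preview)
The paper does not prove this lemma at all: it is stated with a citation to \cite[Lemma~1]{Comte-Lacour} and then simply used. So there is no ``paper's own proof'' to compare against, and your Laplace-type endpoint estimate is exactly the standard way to obtain such a result. Your argument for the main case $c>0$, $s>0$ is correct: localize on $[\omega/2,\omega]$, use $(1+u^2)^{\varrho}\approx\omega^{2\varrho}$ and $\int_{\omega/2}^{\omega}e^{cu^s}du\approx\omega^{1-s}e^{c\omega^s}$ via the derivative trick, then check that the tail $[0,\omega/2]$ is exponentially smaller. Your restriction to $\omega\ge 1$ is also the right call and matches how the lemma is actually applied in the paper (only with $\omega=h^{-1}\ge 1$).

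One small caveat on your degenerate-case bookkeeping: when $c=0$ but $s>0$, the exponential factor is identically $1$, so $J(\omega)\approx\omega^{2\varrho+1}$, yet the target in the stated inequality is $\omega^{2\varrho+1-s}$. These do not match, so the lemma as written is slightly misstated in that corner; the intended reading (and the one in \cite{Comte-Lacour}) is that when $c=0$ one takes $s=0$ as well. This does not affect the paper's use of the lemma, which is only in the supersmooth setting with $c=2\gamma>0$ and $s=\rho>0$.
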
 
Let us start proving Proposition  \ref{lem:point-wise:mean-var.p1-p2}.  We will prove it for the case $j=1$ as the other case $j=2$ is strictly identical. 

The upper bound for the bias of $\widehat{p}_{1,h_{1}}(x)$ at $x \in [0 , 1]$ is classical, for instance, one can find a proof in~\cite[Lemma 5.1]{Nguyen-PhamNgoc-Rivoirard}. 
Now, for the variance term, we have 
\begin{align*}  
	\Var \big(  \widehat{p}_{1,h_{1}}(x) \big)   \leq  \dfrac{1}{(2\pi)^{2} n}     \hspace{0.1cm}    \E \Big(   \sin^{2}(\Theta_1)   \times    \Big|  \int_{\R}  e^{- \mathrm{i}tx} e^{ \mathrm{i}tZ_j} \dfrac{ \hspace{0.1cm}  K_{h_{1}}^{\star}(t) \hspace{0.1cm}   }{f_{\varepsilon}^{\star}(t)}   dt  \Big|^2  \Big)  \leq     \frac{1}{(2\pi)^{2} n}  \left\|  \dfrac{ \hspace{0.1cm}  K_{h_{1}}^{\star} \hspace{0.1cm}   }{f_{\varepsilon}^{\star}}    \right\|_{1}^{2}  .
\end{align*}  
On the other hand, we can also write  
\begin{align*}
	(2 \pi)^{2}  n \Var \big(  \widehat{p}_{1,h_{1}}(x) \big)   
	& \leq  \E \Bigg(     \Big|  \int_{\R}  e^{- \mathrm{i} t x} e^{ \mathrm{i} t Z_{1}} \dfrac{   K_{h_{1}}^{\star} (t)       }{f_{\varepsilon}^{\star} (t) }   dt  \Big|^2  \Bigg)        
	\\
	& = \E \Bigg(    \int_{\R}  e^{- \mathrm{i}tx} e^{\mathrm{i} t Z_{1}} \dfrac{   K_{h_{1}}^{\star} (t)       }{f_{\varepsilon}^{\star} (t) }   dt    \overline{  \int_{\R}  e^{- \mathrm{i}tx} e^{ \mathrm{i} t Z_{1}} \dfrac{   K_{h_{1}}^{\star} (t)       }{f_{\varepsilon}^{\star} (t) }   dt  } \, \Bigg)   
	\\
	& =  \int_{\R} \int_{\R}  e^{- \mathrm{i} (t-u) x} \int_{\R} e^{ \mathrm{i} (t-u) v} f_{Z}(v) dv \dfrac{   K_{h_{1}}^{\star} (t)    K_{h_{1}}^{\star} (-u)    }{f_{\varepsilon}^{\star} (t)   f_{\varepsilon}^{\star} (-u) }    dt du 
	\\
	& =  \int_{\R} \int_{\R}  e^{- \mathrm{i} (t-u) x}  f^{\star}_{Z}(t-u) \dfrac{   K_{h_{1}}^{\star} (t)    K_{h_{1}}^{\star} (-u)    }{f_{\varepsilon}^{\star} (t)   f_{\varepsilon}^{\star} (-u) }    dt du 
	\\
	&\leq   \int_{\R}  \left(  \int_{\R}  \left|  \dfrac{ K_{h_{1}}^{\star}(v+u)  K_{h_{1}}^{\star}(-u)   }{ f_{\varepsilon}^{\star}(v+u) f_{\varepsilon}^{\star}(-u) }  \right|  du \right)  \hspace{0.1cm}  \big|  f_{Z}^{\star}(v) \big| \hspace{0.1cm} dv
	\\
	& \leq  \int_{\R}  \left(  \int_{\R} \left|  \dfrac{ K_{h_{1}}^{\star}(v+u) }{ f_{\varepsilon}^{\star}(v+u) }  \right|^{2}  du  \right)^{\frac{1}{2}}   \left(  \int_{\R} \left|  \dfrac{ K_{h_{1}}^{\star}(-u) }{ f_{\varepsilon}^{\star}(-u) }  \right|^{2}  du  \right)^{\frac{1}{2}}   \hspace{0.1cm}  \big|  f_{Z}^{\star}(v) \big|   \hspace{0.1cm} dv  
	\\
	&\leq \left\| \dfrac{ \hspace{0.1cm}     K_{h_{1}}^{\star} \hspace{0.1cm}   }{ f_{\varepsilon}^{\star} }   \right\|_{2}^{2}   \int_{\R}   \Big|  f_{X}^{\star}(v)  \,    f_{\varepsilon}^{\star}(v) \Big| dv
		=  \left\| \dfrac{ \hspace{0.1cm}     K_{h_{1}}^{\star} \hspace{0.1cm}   }{ f_{\varepsilon}^{\star} }   \right\|_{2}^{2}  \int_{\R}   \Big( \Big| \int_{\R} e^{\mathrm{i} vy} f_{X}(y)  dy  \Big|   \Big)  \, \big|  f_{\varepsilon}^{\star}(v) \big| dv \\
	& \leq     \hspace{0.1cm} \left\| \dfrac{ \hspace{0.1cm}     K_{h_{1}}^{\star} \hspace{0.1cm}   }{ f_{\varepsilon}^{\star} }   \right\|_{2}^{2}  \left\| f_{\varepsilon}^{\star} \right\|_{1},
\end{align*}
as $\big| e^{\mathrm{i} vy}  \big| \leq 1$ for any $v, y \in \R$ and $\int_{\R} f_{X}(y)dy = 1$. This leads to the fact that, for any $x \in [0 , 1]$,  
\begin{align*}
	\Var \big(  \widehat{p}_{1,h_{1}}(x) \big)   \leq  \dfrac{1}{(2\pi)^{2} n} \min  \left\{   \left\| \dfrac{ \,   K_{h_{1}}^{\star} \,  }{ f_{\varepsilon}^{\star} }   \right\|_{2}^{2}  \left\| f_{\varepsilon}^{\star} \right\|_{1}, \left\|  \dfrac{ \,   K_{h_{1}}^{\star} \,  }{f_{\varepsilon}^{\star}}    \right\|_{1}^{2}    \right\} =  \widetilde V_0(n,h_{1}) .   
\end{align*}  
 Now, for the ordinary-smooth noise, under Assumption~\eqref{assumption:varepsilon:ordinary.smooth:form} with $r > 1$ and the constant $c_{\varepsilon} > 0$, one has 
\begin{align*}
(2\pi)^{2} \,  n \,   V_{0}(n,h_{1})  \leq  c_{\varepsilon}^{-2} \min \left\{  \left\| f_{\varepsilon}^{\star} \right\|_{1}  \int_{\R} \big| K_{h_{1}}^{\star}(t) \big|^{2} \big( 1 + |t|  \big)^{2r} dt,      
\Big( \int_{\R} \big|  K_{h_{1}}^{\star}(t) \big|  \big( 1 + |t|  \big)^{r} dt   \Big)^{2}   
\right\}  .   
\end{align*}
Moreover, recall that $K^{\star}_{h_{1}}(t) = K^{\star}(t \, h_{1})$ for any $t \in \R$,  we have
\begin{align*}
	\left\| f_{\varepsilon}^{\star} \right\|_{1}  \int_{\R} \big|  K_{h_{1}}^{\star}(t) \big|^{2} \big( 1 + |t|  \big)^{2r} dt  &=   	\left\| f_{\varepsilon}^{\star} \right\|_{1}  \int_{\R} \big|  K^{\star}(t \, h_{1}) \big|^{2} \big( 1 + |t|  \big)^{2r} dt
	\\
	&=    \left\| f_{\varepsilon}^{\star} \right\|_{1}  \,  h_{1}^{-1}  \int_{\R} \big| K^{\star}(u) \big|^{2} \Big( 1 + \Big| \frac{u}{h_{1}} \Big|  \Big)^{2r} du
	\\[0.2cm]  
	&\leq    
	\left\| f_{\varepsilon}^{\star} \right\|_{1}  \,  h_{1}^{-1-2r}  \int_{\R} \big| K^{\star}(u) \big|^{2} \big( 1 + | u |  \big)^{2r} du  
	\\
	&\leq  
	\left\| f_{\varepsilon}^{\star} \right\|_{1}  \,  h_{1}^{-1-2r} C_{(K^{\star})} ,
\end{align*}
and 
\begin{align*}
	\left( \int_{\R} \big|  K_{h_{1}}^{\star}(t) \big|  \big( 1 + |t|  \big)^{r} dt  \right)^{2}  \leq     
	\Big( h_{1}^{-1-r} \Big)^{2}  \left(  \int_{\R} \big| K^{\star}(u) \big| \big( 1 + | u |  \big)^{r} du  \right)^{2}  \leq  
	h_{1}^{-2-2r} C_{(K^{\star})} ,   
\end{align*}
where $C_{(K^{\star})}$ is defined in Assumption~\ref{assumption:K^star:ordinary.smooth.noise:form}. 
Thus, if $0  <  h_{1}  \leq  1$ we~obtain,   
\begin{align*}
	V_{0}(n,h_{1})  &\leq  \dfrac{   c_{\varepsilon}^{-2}     C_{(K^{\star})}  \min \left\{  \left\| f_{\varepsilon}^{\star}   \right\|_{1}   h_{1}^{-1-2r},    h_{1}^{-2-2r}    \right\}    }{(2\pi)^{2}}   \hspace{0.1cm}    n^{-1}     
	\\   
	&\leq  \dfrac{   c_{\varepsilon}^{-2}     C_{(K^{\star})}  \max \left\{  \left\| f_{\varepsilon}^{\star} \right\|, 1  \right\}  \times  \min \left\{  h_{1}^{-1-2r},    h_{1}^{-2-2r}    \right\}    }{(2\pi)^{2}} \hspace{0.1cm}  n^{-1}   
	\\   
	&\leq  \dfrac{   c_{\varepsilon}^{-2}     C_{(K^{\star})}  \max \left\{  \left\| f_{\varepsilon}^{\star} \right\|, 1  \right\}  }{(2\pi)^{2}} \hspace{0.1cm}  n^{-1} h_{1}^{-1 - 2\r} 
	\,   .                                                                                                                                                                       
\end{align*}
 For the super-smooth noise, under Assumption~\eqref{assumption:varepsilon:super.smooth:form} one has  
\begin{align*}
		&(2\pi)^{2}  n   V_{0}(n,h_{1}) 
		\\
		\leq  \,    &\widetilde{c}_{\varepsilon}^{-2} \min \left\{  \left\| f_{\varepsilon}^{\star} \right\|_{1}  \int_{\R} \big| K_{h_{1}}^{\star}(t) \big|^{2} \big( 1 + |t|^{ 2 } \big)^{\rho_{0}}  \exp \big(  2 \gamma |t|^{\rho}  \big)  dt,    
		\Big( \int_{\R} \big|  K_{h_{1}}^{\star}(t) \big|  \big( 1 + |t|^{ 2 } \big)^{ \frac{\rho_{0}}{2}}  \exp \big(  \gamma |t|^{\rho}  \big)  dt   \Big)^{2}   
		\right\}  .   
	\end{align*}    
Using the sinc  kernel for which we have  $K^{\star}(t) = \mathbf{1}_{[-1, 1]}(t)$  for $t \in \mathbb{R}$
, Lemma~\ref{lem:proof.of.lem:point-wise:mean-var.p1-p2:preliminary.lemma:bound.integral} with $\lambda_{2} > 0$ (and similar computations as in \cite{Comte-Lacour} page 589), one gets   
\begin{align*}
	\left\| f_{\varepsilon}^{\star} \right\|_{1}  \int_{\R} \big| K_{h_{1}}^{\star}(t) \big|^{2} \big( 1 + |t|^{ 2 } \big)^{\rho_{0}}   \exp \big(  2 \gamma |t|^{\rho}  \big)  dt
\leq  \lambda_{2}    \left\| f_{\varepsilon}^{\star} \right\|_{1} \,     h_{1}^{-2\rho_{0} - 1 +  \rho} \exp \Big( 2 \gamma h_{1}^{-\rho} \Big).
\end{align*}
On the other hand, we have 
\begin{align*}
	\left( \int_{\R} \big|   K_{h_{1}}^{\star}(t)   \big|  \big( 1 + |t|^{ 2 } \big)^{\frac{\rho_{0}}{2} }   \exp \big(  \gamma |t|^{\rho}  \big)   dt  \right)^{2}  
	\leq     \lambda_{2}^{2}  \,  h_{1}^{- 2 \rho_{0}  - 2 + 2 \rho} \exp (2 \gamma \,    h_{1}^{- \rho}).
\end{align*}
Hence, we obtain the following upper-bound for $V_{0}(n,h_{1})$ in the super-smooth noise case, 
\begin{align*}
	V_{0}(n,h_{1})  &\leq  \dfrac{  \hspace{0.1cm} (\lambda_{2} + \lambda_{2}^{2})  \,    \widetilde{c}_{\varepsilon}^{-2}  \,  \max   \left\{  \left\| f_{\varepsilon}^{\star} \right\| ; 1  
		\right\}      }{(2\pi)^{2}} \,   n^{-1}  \,  \min \left\{ 1 ; h_{1}^{-1 + \rho} \right\} h_{1}^{-2 \rho_{0} - 1 + \rho} \,  \exp \big( 2 \gamma   \,  h_{1}^{-\rho}  \big) 
		\\
		&=   \dfrac{  (\lambda_{2} + \lambda_{2}^{2}) \,   \widetilde{c}_{\varepsilon}^{-2}  \,  \max   \left\{  \left\| f_{\varepsilon}^{\star} \right\| ; 1  \right\}    }{(2\pi)^{2}} \,  n^{-1}  \,  h_{1}^{(\rho-1)_{+} }   \,    h_{1}^{- 2  \rho_{0} - 1 + \rho} \,  \exp \big( 2 \gamma   \,   h_{1}^{-\rho}  \big)  
		\,   .    
\end{align*}
Similar arguments apply for $\Var \big(  \widehat{p}_{2,h_{2}}(x) \big)$ and we conclude the proof of Proposition~\ref{lem:point-wise:mean-var.p1-p2}.  
\subsubsection{Proof of Theorem  \ref{adaptive:prop:upper-bound.high-Proba:p1-p2}  
}  
\label{sec:adaptive:prop:upper-bound.high-Proba:p1-p2:proof}   

\medskip  

First, we will establish some concentration results for  $\widehat{p}_{j,h_{j}}$ with  $j\in\{1,2\}$ in the following proposition.  
\begin{proposition}
	\label{proposition:apply.Bernstein.inequa:study.adaptation}
	Let $j\in\{1,2\}$.	Under the Assumptions of Theorem~\ref{adaptive:prop:upper-bound.high-Proba:p1-p2}, for all $(h_{j}, h_{j}') \in \mathcal{H}^2_{}$, for all $\alpha  \geq  1$,    
	\begin{align*}
		\mathbb{P} \left( \big| \widehat{p}_{j, h_{j}}(x) - \E \big[ \widehat{p}_{j, h_{j} }(x) \big]  \big|  >  \tilde c_{1}(\alpha)    \sqrt{\widetilde{V}(n,h_{j}')} \right)  &\leq  2 \hspace{0.1cm}  n^{- \alpha} \hspace{0.1cm}  ,
		\\
		\mathbb{P} \left( \big| \widehat{p}_{j, h_{j} ,h_{j}'}(x) - \E \big[ \widehat{p}_{j, h_{j} ,h_{j}'}(x) \big] \big|  >  \tilde c_{1}(\alpha)    \left\| K^{\star}   \right\|_{\infty}   \sqrt{\widetilde{V}(n,h_{j}')} \right)  &\leq  2 \hspace{0.1cm}  n^{- \alpha} \hspace{0.1cm}  ,
	\end{align*}
	as soon as $\tilde c_{1}(\alpha)$ satisfying $\tilde c^2_{1}(\alpha) \min \left\{\tilde  c_{0;1} ; \tilde  c_{0;2} \right\}   \geq  \frac{16 \alpha^{2}}{  \min  \left\{     \left\| f_{\varepsilon}^{\star} \right\|_{1},      1  \right\} }$.
\end{proposition}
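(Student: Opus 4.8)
The plan is to reproduce, in the kernel setting, the scheme of Proposition~\ref{inegalite-bernstein}: write each of $\widehat{p}_{j,h_{j}}(x)$ and $\widehat{p}_{j,h_{j},h_{j}'}(x)$ as an empirical mean of i.i.d.\ bounded variables and apply Bernstein's inequality (Lemma~\ref{lemma:Berstein.inequality}). Concretely, with $g_{1}=\sin$, $g_{2}=\cos$, I would set $T_{g,k}(x):=\frac{g_{j}(\Theta_{k})}{2\pi}\int_{\R}e^{-\mathrm{i}tx}e^{\mathrm{i}tZ_{k}}\frac{K_{g}^{\star}(t)}{f_{\varepsilon}^{\star}(t)}\,dt$, so that $\widehat{p}_{j,g}(x)=\frac1n\sum_{k=1}^{n}T_{g,k}(x)$ with i.i.d.\ summands; and for the convolved estimator I would use that convolving with $K_{h_{j}'}$ multiplies the Fourier multiplier by $K_{h_{j}'}^{\star}$, so $\widehat{p}_{j,h_{j},h_{j}'}(x)=\frac1n\sum_{k}\widetilde T_{k}(x)$ with $\widetilde T_{k}$ carrying the multiplier $K_{h_{j}}^{\star}K_{h_{j}'}^{\star}/f_{\varepsilon}^{\star}$. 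It is cleanest to prove, for any fixed $g\in\mathcal{H}$, the single bound $\mathbb{P}(|\widehat{p}_{j,g}(x)-\E\widehat{p}_{j,g}(x)|>\tilde c_{1}(\alpha)\sqrt{\widetilde V(n,g)})\le 2n^{-\alpha}$, which yields the first assertion (with $g=h_{j}'$).

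Next I would collect the two ingredients Bernstein needs. For the modulus bound, $|g_{j}|\le 1$ and $|e^{-\mathrm{i}tx}e^{\mathrm{i}tZ_{k}}|=1$ give $|T_{g,k}(x)|\le\frac{1}{2\pi}\|K_{g}^{\star}/f_{\varepsilon}^{\star}\|_{1}=:b(g)$; and since $\|K_{h}^{\star}\|_{\infty}=\|K^{\star}\|_{\infty}\le M(K)<\infty$ for every $h$, bounding $|K_{h_{j}}^{\star}|\le\|K^{\star}\|_{\infty}$ gives $|\widetilde T_{k}(x)|\le\|K^{\star}\|_{\infty}\,b(h_{j}')$. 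For the variance, $\Var(T_{g,k}(x))=n\,\Var(\widehat{p}_{j,g}(x))\le n\,\widetilde V_{0}(n,g)$ by Proposition~\ref{lem:point-wise:mean-var.p1-p2}, and running the same variance estimate with multiplier $K_{h_{j}}^{\star}K_{h_{j}'}^{\star}$ (again using $|K_{h_{j}}^{\star}|\le\|K^{\star}\|_{\infty}$ to retain only $K_{h_{j}'}^{\star}$) gives $\Var(\widetilde T_{k}(x))\le\|K^{\star}\|_{\infty}^{2}\,n\,\widetilde V_{0}(n,h_{j}')$. Then I would apply Lemma~\ref{lemma:Berstein.inequality} with $\eta=\tilde c_{1}(\alpha)\sqrt{\widetilde V(n,h_{j}')}$ (resp.\ $\eta=\tilde c_{1}(\alpha)\|K^{\star}\|_{\infty}\sqrt{\widetilde V(n,h_{j}')}$); since $\widetilde V(n,h)=\tilde c_{0,j}\log(n)\widetilde V_{0}(n,h)$ and the factor $\|K^{\star}\|_{\infty}$ cancels between $\eta$, $b$ and the variance, the sub-Gaussian exponent becomes $\tfrac14\tilde c_{1}^{2}(\alpha)\tilde c_{0,j}\log n$ and the Poissonian one $\tfrac14\tilde c_{1}(\alpha)\sqrt{\tilde c_{0,j}\log n}\cdot n\sqrt{\widetilde V_{0}(n,h_{j}')}/b(h_{j}')$.

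The last step is the calibration of $\tilde c_{1}(\alpha)$, which is the only place where the grid $\mathcal{H}$ intervenes. The first exponent exceeds $\alpha\log n$ once $\tilde c_{1}^{2}(\alpha)\tilde c_{0,j}\ge 4\alpha$. For the second, exactly as in the circular proof one computes $n\sqrt{\widetilde V_{0}(n,h_{j}')}/b(h_{j}')=\sqrt n\,\min\{1,\ \|f_{\varepsilon}^{\star}\|_{1}^{1/2}\|K_{h_{j}'}^{\star}/f_{\varepsilon}^{\star}\|_{2}/\|K_{h_{j}'}^{\star}/f_{\varepsilon}^{\star}\|_{1}\}$, and the defining inequality of $\mathcal{H}$ in~\eqref{def:bandwidth.colletion.H_n} forces $\|K_{h_{j}'}^{\star}/f_{\varepsilon}^{\star}\|_{2}/\|K_{h_{j}'}^{\star}/f_{\varepsilon}^{\star}\|_{1}\ge\sqrt{\log n/n}$, so this quantity is $\ge\sqrt{\log n}\,\min\{1,\|f_{\varepsilon}^{\star}\|_{1}^{1/2}\}$; hence the second exponent exceeds $\alpha\log n$ provided $\tilde c_{1}^{2}(\alpha)\tilde c_{0,j}\min\{1,\|f_{\varepsilon}^{\star}\|_{1}\}\ge 16\alpha^{2}$, which is exactly the stated hypothesis and which implies $\tilde c_{1}^{2}(\alpha)\tilde c_{0,j}\ge 4\alpha$ since $\alpha\ge 1$. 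Both exponentials are then $\le n^{-\alpha}$ and Bernstein yields the probability $2n^{-\alpha}$.

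I expect the calibration against $\mathcal{H}$ — i.e.\ lower-bounding $n\sqrt{\widetilde V_{0}(n,h)}/b(h)$ by a constant multiple of $\sqrt{\log n}$ — to be the only genuinely technical point, and it is the precise analogue of the corresponding estimate for the level collection $\mathcal{L}$ in Proposition~\ref{inegalite-bernstein}; everything else (the i.i.d.\ decomposition, the trivial modulus bounds, the variance bound borrowed from Proposition~\ref{lem:point-wise:mean-var.p1-p2}, and the elementary remark $\|K_{h}^{\star}\|_{\infty}\le M(K)$ used to pass to the convolved estimator) is routine.
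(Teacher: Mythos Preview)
Your proposal is correct and follows essentially the same route as the paper: the i.i.d.\ decomposition with $U_{k,j}$ and $\widetilde U_{k,j}$, the modulus bound $b(h)=\frac{1}{2\pi}\|K_h^\star/f_\varepsilon^\star\|_1$, the variance bound via Proposition~\ref{lem:point-wise:mean-var.p1-p2}, the Bernstein application, and the calibration using the defining inequality of $\mathcal{H}$ are all exactly as in the paper. One small remark: the paper (and its own proof) actually establishes the first inequality with the \emph{same} bandwidth in the estimator and in the threshold, i.e.\ $\sqrt{\widetilde V(n,h_j)}$ rather than $\sqrt{\widetilde V(n,h_j')}$; your ``prove it for any $g\in\mathcal H$'' formulation is the right way to read it, and that version is what is actually invoked in the proof of Theorem~\ref{adaptive:prop:upper-bound.high-Proba:p1-p2}.
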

 \begin{proof} Let us prove Proposition~\ref{proposition:apply.Bernstein.inequa:study.adaptation}. We only consider the case $j=1$ as the case $j=2$ is identical. 
For $1 \leq k \leq n$, we define the random variables$$U_{k,1}(x) := \sin (\Theta_k) \dfrac{1}{2\pi} \displaystyle{ \int_{\R} } e^{-\mathrm{i} tx} e^{\mathrm{i} t Z_{k}} \dfrac{ K_{h_{j}}^\star(t) }{ f^{\star}_\varepsilon(t) } dt, $$ such that, $\widehat{p}_{1 , h_{1}}(x) = \frac{1}{n} \sum_{k=1}^{n} U_{k , 1}(x)$. Notice that $\mathbb{E} \big( U_{k , 1}(x) \big) = \big( K_{h_{1}} \ast p_{1} \big)(x)$. 
We have for any $x \in  [0,1] $,   
\begin{align}
	\big| U_{k,1}(x) \big| = \left|  \sin(\Theta_{k})  \dfrac{1}{2\pi} \displaystyle{ \int_{\R} } e^{-\mathrm{i} tx} e^{\mathrm{i} t Z_{k}} \dfrac{ K_{h_{1}}^\star(t) }{ f^{\star}_\varepsilon(t) } dt  \right|  \leq \dfrac{1}{2\pi} \displaystyle{ \int_{\R} } \left|  \dfrac{ K_{h_{j}}^\star(t) }{ f^{\star}_\varepsilon(t) } \right| dt  = \dfrac{1}{2\pi} \left\|  \dfrac{ K_{h_{j}}^\star }{ f^{\star}_\varepsilon } \right\|_{1}  =:  b(h_{1}) , 
\end{align}  
and $\Var \big( U_{k , 1}(x) \big) = n \Var \big( \widehat{p}_{1,h_{1}} (x) \big) \leq n \widetilde V_{0}(n , h_{1})$. Now, applying Lemma~\ref{lemma:Berstein.inequality} to the $U_{k,1}$'s, we obtain for $\eta(h_{1}) > 0$, 
\begin{align*}
	\mathbb{P} \left(  \Big| \widehat{p}_{1 , h_{1}}(x) - \mathbb{E} \big( \widehat{p}_{1 , h_{1}}(x) \big) \Big|  \geq  \eta(h_{1})  \right)  &=  \mathbb{P} \left(  \Big| \sum_{k=1}^{n} U_{k,1}(x) - \mathbb{E} \big( U_{k,1}(x) \big) \Big|  \geq  n \eta(h_{1}) \right)  
	\\
	&\leq  2 \max  \left\{   \exp \Big( - \dfrac{n \times \big( \eta(h_{1}) \big)^{2}}{ 4 n \widetilde V_{0}(n, h_{1}) } \Big) \hspace{0.1cm}  ;    \hspace{0.1cm}   \exp \Big( - \dfrac{ n \times  \eta(h_{1}) }{ 4 b(h_{1}) } \Big)  \right\}  . 
\end{align*} 
For $\alpha \geq 1$, choose $\eta(h_{1}) = \tilde c_{1}(\alpha) \sqrt{ \widetilde{V}(n , h_{1}) }$, with 
$	\widetilde{V}(n , h_{1})  = \tilde  c_{0,1}  \log(n)  \widetilde V_{0}(n, h_{1})  . $
Then, 
\begin{align}
	& \mathbb{P}   \left(  \Big| \widehat{p}_{1 , h_{1}}(x) - \mathbb{E} \big( \widehat{p}_{1 , h_{1}}(x) \big) \Big|  \geq  \tilde c_{1}(\alpha) \sqrt{ \widetilde{V}(n , h_{1}) }    \right)  \nonumber  
	\\
	\leq & \hspace{0.2cm}  2 \max  \left\{   \exp \Bigg( - \dfrac{n  \hspace{0.1cm} \tilde c^2_{1}(\alpha)   \widetilde{V} (n , h_{1})  }{ 4 n \widetilde V_{0}(n, h_{1}) } \Bigg),  \exp \Bigg( - \dfrac{ n   \tilde c_{1}(\alpha) \sqrt{ \widetilde{V}_{1}(n , h_{1}) }  }{ 4 b(h_{1}) } \Bigg)  \right\}  . 
	\label{bernstein.inqua:byBerstein}
\end{align}
We then choose $\tilde c_{1}(\alpha)$. 
 First, $\tilde c_{1}(\alpha)$ is chosen such that 
\begin{align}
	\dfrac{n  \hspace{0.1cm} \tilde  c_{1}^2(\alpha)  \hspace{0.1cm}  \widetilde{V}(n , h_{1})  }{ 4 n \widetilde V_{0}(n, h_{1}) }  =  \dfrac{n  \hspace{0.1cm} \tilde c_{1}^2(\alpha)  \hspace{0.1cm} \tilde  c_{0,1} \log(n)\widetilde V_{0} (n , h_{1})  }{ 4 n \widetilde V_{0}(n, h_{1}) }  \geq  \alpha \log(n) , 
	\label{bernstein.inqua:maximize.1}
\end{align}
that is $\tilde c_{1}(\alpha)$ satisfies $\tilde c_{1}^2(\alpha) \hspace{0.1cm} \tilde c_{0,1} \geq 4 \alpha$.  
\\  
  Secondly, we  can write 
\begin{align*}
	\dfrac{ n   \tilde c_{1}(\alpha) \sqrt{ \widetilde{V}_{1}(n , h_{1}) }  }{ 4 b(h_{1}) }  =  \dfrac{ \tilde c_{1}(\alpha) \sqrt{\tilde c_{0,1}} }{4}  \sqrt{ \log(n) }   \dfrac{ n  \sqrt{ V_{0}(n , h_{1}) }}{ b(h_{1}) } , 
\end{align*}
and for $h_{1} \in \mathcal{H}$, 
\begin{align*}  
	n  \dfrac{  \sqrt{ V_{0}(n , h_{1}) }}{ b(h_{1}) }  
	&=  n  \dfrac{1}{ 2 \pi  \sqrt{n}  }  \hspace{0.1cm}  \min  \left\{    \left\| \dfrac{ \hspace{0.1cm}     K_{h_{1}}^{\star} \hspace{0.1cm}   }{ f_{\varepsilon}^{\star} }   \right\|_{2}  \left\| f_{\varepsilon}^{\star} \right\|_{1}^{1/2}     \hspace{0.1cm}   ;  \hspace{0.1cm}   \left\|  \dfrac{ \hspace{0.1cm}  K_{h_{1}}^{\star} \hspace{0.1cm}   }{f_{\varepsilon}^{\star}}    \right\|_{1}    \right\}  \times 2 \pi  \left( \left\| \dfrac{ K_{h_{1}}^{\star} }{ f_{\varepsilon}^{\star} } \right\|_{1}  \right)^{-1} 
	\\  
	&=  \sqrt{n}      \min  \left\{    \left\| \dfrac{ \hspace{0.1cm}     K_{h_{1}}^{\star}    }{ f_{\varepsilon}^{\star} }   \right\|_{2}  \left\| f_{\varepsilon}^{\star} \right\|_{1}^{1/2}   \left( \left\| \dfrac{ K_{h_{1}}^{\star} }{ f_{\varepsilon}^{\star} } \right\|_{1}  \right)^{-1}     \hspace{0.01cm}   ;  \hspace{0.01cm}   1  \right\}  \geq  \sqrt{c_{3}}\sqrt{\log(n)} ,  
\end{align*}   
with $\sqrt{c_{3} }=  \min  \left\{     \left\| f_{\varepsilon}^{\star} \right\|_{1}^{1/2}; 1  \right\}$, then we get 
\begin{align}
	\dfrac{ n   \tilde c_{1}(\alpha) \sqrt{ \widetilde{V}_{1}(n , h_{1}) }  }{ 4 b(h_{1}) }  &=  \dfrac{ \tilde c_{1}(\alpha) \sqrt{\tilde c_{0,1}} }{4}  \sqrt{ \log(n) }  \dfrac{ n  \sqrt{ V_{0}(n , h_{1}) }}{ b(h_{1}) }    \geq  \alpha  \log(n) 
	\label{bernstein.inqua:maximize.2}
\end{align}   
provided that $\tilde c_{1}(\alpha) \sqrt{\tilde c_{0,1}  c_{3} }   \geq  4 \nu$. Note that this condition also ensures the constraint $\tilde c_{1}^2(\alpha) \tilde c_{0,1}  \geq 4 \alpha $.
Now, combining \eqref{bernstein.inqua:maximize.1} and \eqref{bernstein.inqua:maximize.2}, we get from \eqref{bernstein.inqua:byBerstein} for any $\alpha \geq 1$ 
\begin{align}
	\mathbb{P} \left( \Big| \widehat{p}_{1,h_{1}}(x) - \mathbb{E} \big( \widehat{p}_{1,h_{1}}(x) \big) \Big| > \tilde c_{1}(\alpha)  \sqrt{\widetilde{V}(n,h_{1})} \right)  \nonumber 
	&\leq 2 n^{-\alpha}.
	\label{proof:prop:apply.Bernstein.inequa:P(Omega_{1,n}^{c})}
\end{align}
	Now, for the second assertion of Proposition~\ref{proposition:apply.Bernstein.inequa:study.adaptation}, for $1 \leq k \leq n$, 
	we define random variables  
	\[  
	\widetilde{U}_{k , 1}(x) := \sin(\Theta_k) \dfrac{1}{2\pi} \displaystyle{ \int_{\R} } e^{-\mathrm{i} tx} e^{\mathrm{i} t Z_{k}} \dfrac{  K_{h'_{1}}^\star(t)   K_{h_{1}}^\star(t) }{ f^{\star}_\varepsilon(t) } dt, \]
	hence, $\widehat{p}_{1,h_{1}, h_{1}'}(x) = \dfrac{1}{n}\sum_{k=1}^{n} \widetilde{U}_{k , 1}( x )$.
	We have   	
	\begin{align*} 
		\left| \widetilde{U}_{k,1} (x) \right|   
		&\leq \dfrac{1}{2\pi} \displaystyle{ \int_{\R} } \left|  \dfrac{  K_{h'_{1}}^\star(t)   K_{h_{1}}^\star(t) }{ f^{\star}_\varepsilon(t) } \right| dt  
	\leq    \dfrac{1}{2\pi}  \left\| K^{\star} \right\|_{\infty}     \left\| \dfrac{ K_{h_{1}}^\star }{ f^{\star}_\varepsilon } \right\|_{1},
	\end{align*}
	and $$\Var \big( \widetilde{U}_{k,1}(x) \big)  =  n  \Var\big(\widehat{p}_{1,h_{1},h_{1}'}(x) \big)  \leq  n  \left\| K^{\star} \right\|_{\infty} \widetilde V_{0}(n,h_{1}).$$\\
	Using similar arguments in proof of the first assertion on $\widehat{p}_{1, h_{1}}(x)$, we obtain with a probability greater than $1 - 2n^{-\alpha}$ that
	\begin{align*}
		\big| \widehat{p}_{1, h_{1} ,h_{1}'}(x) - \E \big[ \widehat{p}_{1, h_{1} ,h_{1}'}(x) \big] \big|    \leq   \tilde c_{1}(\alpha)   \left\| K^{\star} \right\|_{\infty}    \sqrt{\widetilde{V}(n,h_{1})} .
	\end{align*}
	This concludes the proof of Proposition~\ref{proposition:apply.Bernstein.inequa:study.adaptation}.
\end{proof}
\medskip

Now, we can start to prove Theorem~\ref{adaptive:prop:upper-bound.high-Proba:p1-p2}. As the proof is similar to the proof of Theorem  \ref{theorem-proba-oracle}, we will omit some computations.

Let $j \in \{1,2\}$ and $h_{j} \in \mathcal{H}$ be fixed. Recall that the target is to find an upper bound for $\big| \widehat{p}_{j,\widehat{h}_{j}}(x)  -  p_{j}(x) \big|$. We consider the following decomposition:
	\begin{align*}
		\big| \widehat{p}_{j,\widehat{h}_{j}}(x)  -  p_{j}(x) \big|   \leq   { \big| \widehat{p}_{j,\widehat{h}_{j}}(x)  -  \widehat{p}_{j, h_{j} ,\widehat{h}_{j}}(x) \big| } &+   {\big| \widehat{p}_{j, h_{j} ,\widehat{h}_{j}}(x)  -  \widehat{p}_{j, h_{j}}(x) \big|}  
		+  \hspace{0.1cm}   \big|  \widehat{p}_{j, h_{j}}(x)  -  p_{j}(x) \big| .
	\end{align*}
		Following proof of  Theorem \ref{theorem-proba-oracle}, we obtain
	\begin{equation} \label{adaptive:thm:upper-bound.high-Proba:p2:proof:inequality1}
		\big| \widehat{p}_{j,\widehat{h}_{j}}(x)  -  p_{j}(x) \big|   \leq   2  \widetilde A(h_{j} , x)  +   2 \sqrt{\widetilde{V}_{j}(n,h_{j})}  +   \big|  \widehat{p}_{j, h_{j}}(x)  -  p_{j}(x) \big| .
	\end{equation}
	For the  study of the term $\widetilde A(h_{j} , x)$, we recall that:
	$$
	\widetilde 	A(h_{j} , x) 
		= \sup_{h_{j}' \in \mathcal{H}}  \Big\{ \big| \widehat{p}_{j,h_{j}'}(x)  -  \widehat{p}_{j, h_{j} ,h_{j}'}(x) \big| -   \sqrt{\widetilde{V}_{j}(n,h_{j}')} \Big\}_{+} .
	$$

We get that
	\begin{align*}
		\big| \widehat{p}_{j,h_{j}'}(x)  -  \widehat{p}_{j, h_{j} ,h_{j}'}(x) \big| -   \sqrt{\widetilde{V}_{j}(n,h_{j}')}   
		\leq    &\hspace{0.2cm}  \big| \widehat{p}_{j,h_{j}'}(x)  -  \E \big[ \widehat{p}_{j,h_{j}'}(x) \big]  \big|  
		-    \dfrac{\sqrt{\widetilde{V}_{j}(n,h_{j}')}}{ (1 + \left\| K^{\star} \right\|_{\infty} ) }
		\\
		& \hspace{0.2cm}  +  \big| \widehat{p}_{j, h_{j} ,h_{j}'}(x) - \E \big[ \widehat{p}_{j, h_{j} ,h_{j}'}(x) \big] \big|  
		-  \left\| K^{\star} \right\|_{\infty}     \dfrac{\sqrt{\widetilde{V}_{j}(n,h_{j}')}}{ (1 + \left\| K^{\star} \right\|_{\infty} ) }
		\\[0.2cm]  
		& \hspace{0.2cm} +  \big| \E \big[ \widehat{p}_{j,h_{j}'}(x) \big]  -  \E \big[ \widehat{p}_{j, h_{j} ,h_{j}'}(x) \big] \big|.
	\end{align*}
	However, for any $h_{j}' \in \mathcal{H}$, with $M(K)$ 
	defined in~\eqref{equa:definition.M(K):form},  
	we get  
	\begin{align}\label{uniformbias}
		\big| \E \big[ \widehat{p}_{j,h_{j}'}(x) \big]  -  \E \big[ \widehat{p}_{j, h_{j} ,h_{j}'}(x) \big] \big|  &=  \big| K_{h_{j}'}* \big( p_{j} - K_{h_{j}}*p_{j} \big)(x) \big|  
		\leq  M(K)  \hspace{0.1cm}   
		\left\| p_{j} - K_{h_{j}} \ast  p_{j}   \right\|_{\infty}
		. 
	\end{align}
	Hence, 
	incorporating this bound in the definition of $\widetilde A(h_{j} , x )$, we obtain
	\begin{align} 
		\widetilde A(h_{j} , x) 
		&= \sup_{h_{j}' \in \mathcal{H}}  \Big\{ \big| \widehat{p}_{j,h_{j}'}(x)  -  \widehat{p}_{j, h_{j} ,h_{j}'}(x) \big| -   \sqrt{\widetilde{V}(n,h_{j}')} \Big\}_{+}   \nonumber
		\\
		&\leq  \sup_{h_{j}' \in \mathcal{H}}  \left\{\big| \widehat{p}_{j,h_{j}'}(x)  -  \E \big[ \widehat{p}_{j,h_{j}'}(x) \big]  \big|  -   \dfrac{\sqrt{\widetilde{V}(n,h_{j}')}}{ (1 +  \left\| K ^{\star} \right\|_{\infty}  ) }  \right\}_{+}
		\nonumber   
		\\
		&\hspace{0.4cm}  +  \sup_{h_{j}' \in \mathcal{H}}  \left\{\big| \widehat{p}_{j, h_{j} ,h_{j}'}(x) - \E \big[ \widehat{p}_{j, h_{j} ,h_{j}'}(x) \big] \big|  -   \left\| K ^{\star} \right\|_{\infty}    \dfrac{\sqrt{\widetilde{V}(n,h_{j}')}}{ (1 +  \left\| K ^{\star} \right\|_{\infty} ) }  \right\}_{+}
		\nonumber
		\\
		& \hspace{6cm} +   \left\| K ^{\star} \right\|_{\infty}  \left\|p_{j} - K_{h_{j}}*p_{j}\right\|_{\infty}    \nonumber
		\\[0.2cm]  
		&\leq  \sup_{h_{j}' \in \mathcal{H}}  \left\{\big| \widehat{p}_{j,h_{j}'}(x)  -  \E \big[ \widehat{p}_{j,h_{j}'}(x) \big]  \big|  -   \dfrac{\sqrt{\widetilde{V}(n,h_{j}')}}{ (1 +  \left\| K ^{\star} \right\|_{\infty}  ) }  \right\}_{+}   
		\label{adaptive:thm:upper-bound.high-Proba:p2:proof:inequality2}
		\\
		&\hspace{0.4cm}  +  \sup_{h_{j}' \in \mathcal{H}}  \left\{\big| \widehat{p}_{j, h_{j} ,h_{j}'}(x) - \E \big[ \widehat{p}_{j, h_{j} ,h_{j}'}(x) \big] \big|  -   \left\| K ^{\star} \right\|_{\infty}    \dfrac{\sqrt{\widetilde{V}(n,h_{j}')}}{ (1 +  \left\| K ^{\star} \right\|_{\infty} ) }  \right\}_{+}    
		\nonumber
		\\
		&\hspace{0.4cm}  +   M(K) 
		\left\| p_{j} - K_{h_{j}} \ast  p_{j}   \right\|_{\infty}.   
		\nonumber
	\end{align}
By  Proposition~\ref{proposition:apply.Bernstein.inequa:study.adaptation}
	
	\begin{align*}
		\mathbb{P} \left( \big| \widehat{p}_{j, h_{j} }(x) - \E \big[ \widehat{p}_{j, h_{j} }(x) \big] \big|  >  c_{1}(\alpha)    \sqrt{\widetilde{V}(n  ,  h_{j})} \right)  \leq   2 n^{-\alpha} .
	\end{align*}
	It implies that if we take $c_{1}(\alpha) = \dfrac{1}{1 + \left\| K^{\star} \right\|_{\infty}}$ and if  $c_{0,j} \geq 16 \alpha^{2} \big( 1 + \left\| K^{\star} \right\|_{\infty}  \big)^{2} \dfrac{1}{ \min  \left\{     \left\| f_{\varepsilon}^{\star} \right\|_{1}      ,     1  \right\} } $, then
	\begin{align*}
		\mathbb{P} \left(  \sup_{h_{j}' \in \mathcal{H}}  \left\{ \big| \widehat{p}_{j,h_{j}'}(x)  -  \E \big[ \widehat{p}_{j,h_{j}'}(x) \big]  \big|  -   \dfrac{\sqrt{\widetilde{V}(n,h_{j}')}}{ (1 + \left\| K^{\star}  \right\|_{\infty} ) }  \right\}_{+}  >  0  \right)  &\leq  2   \sum_{h_{j} \in \mathcal{H}} n^{-  \alpha}  \leq   2  n^{1 - \alpha}  ,
	\end{align*}
	as $\card(\mathcal{H})  \leq  n$ (see~\eqref{def:bandwidth.colletion.H_n}).
	Similarly, from  Proposition~\ref{proposition:apply.Bernstein.inequa:study.adaptation}, we can deduce that  
	\begin{align*}
		\mathbb{P} \left( \sup_{h_{j}' \in \mathcal{H}}  \left\{\big| \widehat{p}_{j, h_{j} ,h_{j}'}(x) - \E \big[ \widehat{p}_{j, h_{j} ,h_{j}'}(x) \big] \big|  -   \left\| K ^{\star} \right\|_{\infty}    \dfrac{\sqrt{\widetilde{V}_{j}(n,h_{j}')}}{ (1 +  \left\| K ^{\star} \right\|_{\infty} ) }  \right\}_{+}   >   0   \right)  \leq   2 n^{1-\alpha} .
	\end{align*}
	Consequently, the following set
	\begin{align*}
		\widetilde {\mathcal {E}_j }
		&:=  \Bigg\{  \sup_{h_{j}' \in \mathcal{H}}  \Big\{ \big| \widehat{p}_{j,h_{j}'}(x)  -  \E \big[ \widehat{p}_{j,h_{j}'}(x) \big]  \big|  -    \dfrac{\sqrt{\widetilde{V}_{j}(n,h_{j}')}}{ (1 +  \left\| K^{\star} \right\|_{\infty} ) }  \Big\}_{+}  = 0  \Bigg\}
		\\
		&\hspace{0.5cm}  \bigcap  \Bigg\{ \forall \, h_j \in \mathcal{H},    \sup_{ h_{j}' \in \mathcal{H} }  \Big\{ \big| \widehat{p}_{j, h_{j} ,  h_{j}'}(x) - \E \big[ \widehat{p}_{j, h_{j} ,  h_{j}'}(x) \big] \big|    
		-   \left\| K^{\star} \right\|_{\infty}   \dfrac{\sqrt{\widetilde{V}_{j}(n,h_{j}')}}{ (1 + \left\| K^{\star} \right\|_{\infty} ) }  \Big\}_{+} = 0  \Bigg\}
	\end{align*}
	has probability greater than $1 - 4 \, n^{2 - \alpha}$. 
	\leavevmode \\
	Now, for $\alpha  \geq  3$, choose $\alpha = 2 + q$ and then $\tilde c_{0,j} \geq 16 \big(2 + q \big)^{2} \big( 1 + \left\| K^{\star} \right\|_{\infty} \big)^{2}  \dfrac{1}{ \min  \left\{     \left\| f_{\varepsilon}^{\star} \right\|_{1}      , 1  \right\} }  $. Thus, we obtain that $\mathbb{P} \big( \widetilde {\mathcal {E}_{j}  } \big) > 1 - 4  \,  n^{-q} $.
	Combining  \eqref{adaptive:thm:upper-bound.high-Proba:p2:proof:inequality1} and \eqref{adaptive:thm:upper-bound.high-Proba:p2:proof:inequality2}, we have on $\widetilde {\mathcal {E}_j  }$:
	\begin{align*}
		\big| \widehat{p}_{j,\widehat{h}_{j}}(x)  -  p_{j}(x) \big|   &\leq   2    \widetilde A(h_{j},x)   +   2    \sqrt{\widetilde{V}_{j}(n,h_{j})}  +   \big|  \widehat{p}_{j, h_{j}}(x)   -   p_{j}(x) \big|    
		\\
		&\leq 2 M(K) \, 
		\left\| p_{j} - K_{h_{j}} \ast  p_{j}   \right\|_{\infty}    
		+  2   \sqrt{\widetilde{V}_{j}(n,h_{j})}  
		+   \big|  \widehat{p}_{j, h_{j}}(x) - p_{j}(x) \big| ,
	\end{align*}
	but still on $\widetilde {\mathcal {E}_j  }$, one gets $\big| \widehat{p}_{j , h_{j}}(x) - \E \big[\widehat{p}_{j , h_{j}}(x)\big] \big|  -   \dfrac{\sqrt{\widetilde{V}_{j}(n , h_{j})}}{ (1 + \left\| K^{\star} \right\|_{\infty } ) } \leq 0$, so
	\begin{align*}
		\big| \widehat{p}_{j,h_{j}}(x)   -   p_{j}(x) \big|   &\leq  \big| \E \big[\widehat{p}_{j,h_{j}}(x)\big]   -    p_{j}(x) \big|  +  \big| \widehat{p}_{j,h_{j}}(x) - \E \big[\widehat{p}_{j,h_{j}}(x)\big] \big|  
		\\ & \hspace{2.5cm}
		-  \dfrac{\sqrt{\widetilde{V}_{j}(n,h_{j})}}{ (1 + \left\| K^{\star}  \right\|_{\infty} ) } 
		+  \dfrac{\sqrt{\widetilde{V}_{j}(n,h_{j})}}{ (1 + \left\| K^{\star} \right\|_{\infty} ) }
		\\
		&\leq   
		\left\| p_{j} - K_{h_{j}} \ast  p_{j}   \right\|_{\infty}
		+    \sqrt{\widetilde{V}_{j}(n,h_{j})}.   
	\end{align*}
	Therefore, on $\widetilde {\mathcal {E}_j }$, we finally obtain
	\begin{align*}
		\big| \widehat{p}_{j,\widehat{h}_{j}}(x)   -    p_{j}(x) \big|   &\leq   
		\big( 1 + 2 M(K) \big) \, 
		\left\| p_{j} - K_{h_{j}} \ast  p_{j}   \right\|_{\infty}
		+  3  \sqrt{\widetilde{V}_{j}(n,h_{j})} .
	\end{align*}
	This concludes the proof of Theorem~\ref{adaptive:prop:upper-bound.high-Proba:p1-p2}.

%
%
\subsubsection{Proof of Theorem~\ref{adaptive:thm:pointwise-risk.upper-bound:m} }
\label{sec:adaptive:thm:pointwise-risk.upper-bound:m:proof}  
\noindent  
First, we establish a concentration result for    $\widehat{p}_{j,\widehat{h}_{j}}(x)$ in the following lemma. 
\begin{lem} \label{adaptive:thm:pointwise-risk.upper-bound:p1-p2}
Suppose that the noise $\varepsilon$ is OS  with $r > 1$.  Let $j \in \{ 1,  2 \} $ and  $p_{j}$ belong to $\mathcal{H}(\beta_{j}, \Lambda_{j})$. Assume that the kernel K satisfies Assumption~\ref{assumption:K^star:ordinary.smooth.noise:form} and  Assumption~\ref{assumption:kernel.K} with an index $\kappa \in \mathbb{R}_{+}$ such that $\kappa \geq  \beta_{j}$. 
Then under assumptions of Theorem~\ref{adaptive:prop:upper-bound.high-Proba:p1-p2},  there exist constants $C_{j}$  depending on $\beta_{j}, \Lambda_{j}, \tilde c_{0,j}, K$ and~$f_{\varepsilon}^{\star}$ such that, with  
	\begin{align*}
		\widetilde{E}_{j}  :=\left\{\big| \widehat{p}_{j,\widehat{h}_j}(x) - p_{j}(x) \big|  \leq C_{j}  \,  \Big( \dfrac{ \log(n) }{n} \Big)^{\frac{2 \beta_{j}}{2 \beta_{j} + 2 r + 1  }}   \right\}  ,  
	\end{align*}
	we have, for $n$ large enough, 
	\begin{align*}
		\mathbb{P} \Big( 	\widetilde{E}_{j}  \Big)  > 1 -  4 n^{-q} . 
	\end{align*}
\end{lem}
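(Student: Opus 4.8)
The statement to prove is Lemma~\ref{adaptive:thm:pointwise-risk.upper-bound:p1-p2}, which is the linear-predictor analogue of Lemma~\ref{proba-hat-p}. The strategy is to feed the oracle inequality of Theorem~\ref{adaptive:prop:upper-bound.high-Proba:p1-p2} with a well-chosen bandwidth $h_{j}^{*}\in\mathcal{H}$ realizing the bias-variance tradeoff, and then bound each of the two resulting terms using Proposition~\ref{lem:point-wise:mean-var.p1-p2}. Concretely, first I would fix the candidate bandwidth
$$
h_{j}^{*} \;\varpropto\; \Big(\frac{\log n}{n}\Big)^{\frac{1}{2\beta_{j}+2r+1}},
$$
which is the minimizer over $h>0$ of $h^{2\beta_{j}} + \frac{\log n}{n}\, h^{-1-2r}$ (up to rounding to the grid $\mathcal{H}$ of the form $k^{-1}$, $k\le n$).

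**Checking $h_{j}^{*}\in\mathcal{H}$.** The key admissibility point is that $h_{j}^{*}$ belongs to the collection $\mathcal{H}$ defined in~\eqref{def:bandwidth.colletion.H_n}. For this I would show, exactly as in the circular case, that under the ordinary-smooth Assumption~\eqref{assumption:varepsilon:ordinary.smooth:form} with $r>1$ one has
$$
\left\| \frac{K_{h}^{\star}}{f_{\varepsilon}^{\star}}\right\|_{2}^{2}\Big(\left\| \frac{K_{h}^{\star}}{f_{\varepsilon}^{\star}}\right\|_{1}\Big)^{-2}\;\succsim\; h,
$$
so the defining inequality $\left\| K_{h}^{\star}/f_{\varepsilon}^{\star}\right\|_{2}^{2}\big(\left\| K_{h}^{\star}/f_{\varepsilon}^{\star}\right\|_{1}\big)^{-2}\ge \log n/n$ amounts to $h\succsim \log n/n$; since $h_{j}^{*}\propto (\log n/n)^{1/(2\beta_{j}+2r+1)}\ge \log n/n$ for $n$ large, it lies in $\mathcal{H}$ (and in particular $h_{j}^{*}=k^{-1}$ for some $k\le n$, which is compatible with the grid). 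The bound $\left\| K_{h}^{\star}/f_{\varepsilon}^{\star}\right\|_{1}\precsim h^{-1-r}$, $\left\| K_{h}^{\star}/f_{\varepsilon}^{\star}\right\|_{2}^{2}\precsim h^{-1-2r}$ follow from Assumption~\ref{assumption:K^star:ordinary.smooth.noise:form} and the lower bound $|f_{\varepsilon}^{\star}(t)|\ge c_{\varepsilon}(1+|t|)^{-r}$, together with Lemma~\ref{lem:proof.of.lem:point-wise:mean-var.p1-p2:preliminary.lemma:bound.integral}.

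**Plugging into the oracle inequality.** On the event $\widetilde{\mathcal E}_{j}$ of Theorem~\ref{adaptive:prop:upper-bound.high-Proba:p1-p2} (which has probability $>1-4n^{-q}$ under the stated condition on $\tilde c_{0,j}$), we get
$$
\big|\widehat{p}_{j,\widehat h_{j}}(x)-p_{j}(x)\big|
\;\le\;\big(1+2M(K)\big)\,\big\|p_{j}-K_{h_{j}^{*}}*p_{j}\big\|_{\infty}+3\sqrt{\widetilde V_{j}(n,h_{j}^{*})}.
$$
For the bias term, Proposition~\ref{lem:point-wise:mean-var.p1-p2} (using $p_{j}\in\mathcal H(\beta_{j},\Lambda_{j})$ and $\kappa\ge\beta_{j}$) gives $\big\|p_{j}-K_{h_{j}^{*}}*p_{j}\big\|_{\infty}\le C_{K,\kappa}\Lambda_{j}(h_{j}^{*})^{\beta_{j}}\precsim (\log n/n)^{\beta_{j}/(2\beta_{j}+2r+1)}$; note I need the supremum-norm version of the bias bound, which the same argument yields since the pointwise constant $C_{K,\kappa}\Lambda_{j}$ does not depend on $x$. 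For the variance term, $\widetilde V_{j}(n,h_{j}^{*})=\tilde c_{0,j}\log n\,\widetilde V_{0}(n,h_{j}^{*})\precsim \log n\cdot n^{-1}(h_{j}^{*})^{-1-2r}\precsim (\log n/n)^{2\beta_{j}/(2\beta_{j}+2r+1)}$ by the OS case of Proposition~\ref{lem:point-wise:mean-var.p1-p2} (valid since $h_{j}^{*}\le 1$). Squaring and combining, the event $\widetilde{\mathcal E}_{j}$ is contained in $\widetilde E_{j}$ for a suitable constant $C_{j}$ depending on $\beta_{j},\Lambda_{j},\tilde c_{0,j},K,f_{\varepsilon}^{\star}$, which proves $\mathbb P(\widetilde E_{j})>1-4n^{-q}$.

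**Main obstacle.** The only genuinely delicate point is the admissibility verification $h_{j}^{*}\in\mathcal H$: one must control the ratio $\left\| K_{h}^{\star}/f_{\varepsilon}^{\star}\right\|_{2}^{2}/\left\| K_{h}^{\star}/f_{\varepsilon}^{\star}\right\|_{1}^{2}$ from below by $c\,h$ uniformly, which requires both the lower bound $|f_{\varepsilon}^{\star}(t)|\ge c_{\varepsilon}(1+|t|)^{-r}$ and a matching lower estimate on $\|K_{h}^{\star}/f_{\varepsilon}^{\star}\|_{2}$ via Lemma~\ref{lem:proof.of.lem:point-wise:mean-var.p1-p2:preliminary.lemma:bound.integral}; the rest is bookkeeping of constants. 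Everything else is a direct transcription of the circular-case argument (Lemma~\ref{proba-hat-p}) with Fourier series replaced by Fourier transforms and levels $L$ replaced by bandwidths $h$.
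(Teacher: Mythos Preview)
Your approach is essentially identical to the paper's: define the balancing bandwidth $h_{j}^{*}\propto(\log n/n)^{1/(2\beta_{j}+2r+1)}$, verify it belongs to $\mathcal{H}$ by lower-bounding the ratio $\|K_{h}^{\star}/f_{\varepsilon}^{\star}\|_{2}^{2}\big/\|K_{h}^{\star}/f_{\varepsilon}^{\star}\|_{1}^{2}$ by a constant times $h$, and plug into the oracle inequality of Theorem~\ref{adaptive:prop:upper-bound.high-Proba:p1-p2} using the bias and variance bounds of Proposition~\ref{lem:point-wise:mean-var.p1-p2}. One cosmetic remark: your ``squaring'' step at the end is superfluous for the argument---both you and the paper actually obtain $|\widehat{p}_{j,\widehat h_{j}}(x)-p_{j}(x)|\le C_{j}(\log n/n)^{\beta_{j}/(2\beta_{j}+2r+1)}$, and the exponent $2\beta_{j}$ in the displayed definition of $\widetilde E_{j}$ appears to be a typo in the statement (compare with the circular analogue Lemma~\ref{proba-hat-p}, where the event is phrased in terms of $|\cdot|^{2}$).
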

\begin{proof}
Let  $j\in\{1,2\}$.
	Since $p_{j}$ belongs to $\mathcal{H}(\beta_{j}, \Lambda_{j})$, from  Proposition~\ref{lem:point-wise:mean-var.p1-p2},  we have
	\begin{align*}
		\Big| \mathbb{E}\big(\widehat{p}_{j , h_{j} }(x)\big) - p_{j}(x) \Big|  
		\leq     C_{K,\mathcal{\kappa}} \,  \Lambda_{j} \,  h_{j}^{\beta_{j}}  ,    
	\end{align*}
and
	\begin{align*}
		\widetilde V(n,h_{j})  \precsim    \dfrac{    \log n  }{n} h_{j}^{-1 - 2\r}  .  
	\end{align*}
	Thus, we define $h_{j,OS}$ by    
	\begin{align}
		h_{j,OS} := \underset{h_{j} \in \left\{k^{-1} :\, k \in \N^{*}  \right\} 
		}{\textrm{arg min}}  \left\{   h_{j}^{2 \beta_{j}}  +    \frac{\log n }{n} h_{j}^{-1 - 2\r}     \right\} .  
		\label{adaptive:corollary:concentration:p1-p2:proof.bound.g1}
	\end{align}
	Equation \eqref{adaptive:corollary:concentration:p1-p2:proof.bound.g1} yields that $h_{j,OS} $ is of order $\Big( \dfrac{\log(n)}{n} \Big)^{\frac{1}{2 \beta_{j} + 1 + 2r }}  $. Let us show that $h_{j,OS} \in  \mathcal{H}$. 

Let $0 \leq h \leq 1$. We have (using similar computations as in \cite{Comte-Lacour} page 598)
	\begin{align*}
		\left\| \dfrac{K^{\star}_{h} }{ f_{\varepsilon}^{\star} }   \right\|_{2}^{2}   \Bigg( \left\| \dfrac{K^{\star}_{h} }{ f_{\varepsilon}^{\star} }   \right\|_{1} \Bigg)^{-2}    
		&=    \int_{\R}  \Big| \dfrac{K^{\star}(t \, h_{}) }{ f_{\varepsilon}^{\star}(t) } \Big|^{2} dt   \hspace{0.1cm}  \Bigg( \int_{\R}  \Big| \dfrac{K^{\star}(t \, h_{}) }{ f_{\varepsilon}^{\star}(t) } \Big| dt    \Bigg)^{-2}
		\\
		&\geq  h_{}  \,   c_{\varepsilon}^{-2}   C_{\varepsilon}^{-2}    \int_{\R}  \big| K^{\star}(u) \big|^{2}  \,  \big| u \big|^{2r}   du   \,  \Bigg( \int_{\R}  \big| K^{\star}(v) \big|  \Big( 1 + \big| v \big| \Big)^{r} dv    \Bigg)^{-2}.  
				\end{align*}
				Consequently, 
				\begin{align}
		\left\| \dfrac{K^{\star}_{h_{j,OS}} }{ f_{\varepsilon}^{\star} }   \right\|_{2}^{2}   \Bigg( \left\| \dfrac{K^{\star}_{h_{j,OS}} }{ f_{\varepsilon}^{\star} }   \right\|_{1} \Bigg)^{-2}    		& \geq C \Big( \dfrac{\log(n)}{n} \Big)^{\frac{1}{2 \beta_{j} + 1 + 2r}}  \,   c_{\varepsilon}^{-2}   C_{\varepsilon}^{-2}    \int_{\R}  \big| K^{\star}(u) \big|^{2}  \,  \big| u \big|^{2r}   du   \,  \Bigg( \int_{\R}  \big| K^{\star}(v) \big|  \Big( 1 + \big| v \big| \Big)^{r} dv    \Bigg)^{-2}    
		\nonumber  
		\\
		&\geq  \dfrac{\log(n)}{n}
		, 
		\label{proof:adaptive:corollary:pointwise-risk.upper-bound:p1-p2:h_optimal.in.Hn:verification:ordinary.noise.case}
	\end{align}
	since $\dfrac{1}{2\beta_{j} + 1 + 2 r} < 1$ for $\beta_{j} > 0, r > 0$ and 
	Assumption~\ref{assumption:K^star:ordinary.smooth.noise:form} guarantees that all integrals finite. Thus, $h_{j, OS} \in \mathcal{H}$. 
	As a result, using Theorem~\ref{adaptive:prop:upper-bound.high-Proba:p1-p2}, we obtain 
	with probability greater than $1 - 4 \,  n^{-q}$, that
	\begin{align*}
		\big| \widehat{p}_{j,\widehat{h}_j}(x) - p_{j}(x) \big|   \leq  
		C_{j} \,  
		\Big( \dfrac{\log(n)}{n} \Big)^{\frac{ \beta_{j} }{2 \beta_{j} + 1 + 2r }}  \,   ,    
	\end{align*}  
	with  
	constant $C_{j}$ depending on $\beta_{j}, \Lambda_{j}, \tilde c_{0,j}$, $K$ and  $f_{\varepsilon}^{\star}$. 
	This concludes the proof of Lemma~\ref{adaptive:thm:pointwise-risk.upper-bound:p1-p2}.   
\end{proof}

\bigskip  

Let us now turn to the proof of Theorem \ref{adaptive:thm:pointwise-risk.upper-bound:m}.
 It is very similar to the proof of Theorem   \ref{adaptive:thm:circular} except that one has to use Lemma \ref{adaptive:thm:pointwise-risk.upper-bound:p1-p2} instead of Lemma \ref{proba-hat-p}. Consequently, we omit the proof.

\end{document}